\numberwithin{equation}{section}
\newtheorem{theorem}{Theorem}
\newtheorem{definition}[theorem]{Definition}
\newtheorem{lemma}{Lemma}
\newtheorem{remark}{Remark}
\newtheorem{proposition}{Proposition}
\begin{document}
 \title{Some dynamical properties of pseudo-automorphisms in dimension $3$}
 \author{Tuyen Trung Truong}
       \address{Department of Mathematics, Syracuse University, Syracuse, NY 13244, USA}
 \email{tutruong@syr.edu}
 
\thanks{}
    \date{\today}
    \keywords{Dynamical degrees, Intersection of currents, Invariant currents, Pseudo-automorphisms, Pull-back of currents, Quasi-potentials.}
    \subjclass[2000]{37F99, 32H50.}
    \begin{abstract}
Let $X$ be a compact K\"ahler manifold of dimension $3$ and let $f:X\rightarrow X$ be a pseudo-automorphism. Under the mild condition that $\lambda _1(f)^2>\lambda _2(f)$, we  prove the existence of invariant positive closed $(1,1)$ and $(2,2)$ currents, and we also discuss the (still open) problem of  intersection of such currents. We prove a weak equidistribution result (which is essentially known in the literature) for Green $(1,1)$ currents of meromorphic selfmaps, not necessarily $1$-algebraic stable, of a compact K\"ahler manifold of arbitrary dimension; and discuss how a stronger equidistribution result may be proved for pseudo-automorphisms in dimension $3$. As a byproduct, we show that the intersection of some dynamically related currents are well-defined with respect to our definition here, even though not obviously to be seen so using the usual criteria.  
\end{abstract}
\maketitle
\section{Introduction}       % Enter section title between curly braces
\label{SectionIntroduction} 

This paper studies the dynamics of pseudo-automorphisms in dimension $3$.  Let $X$ be a compact K\"ahler manifold of dimension $3$. A map $f:X\rightarrow X$ is a pseudo-automorphism if it is a bimeromorphic map so that both $f$ and $f^{-1}$ has no exceptional hypersurfaces (see Dolgachev-Ortland \cite{dolgachev-ortland}). Note that (see Lemma \ref{LemmaImageOfCurveByPseudoAutomorphisms}) if $f$ is a pseudo-automorphism then so are the iterates $f^n$ ($n\in \mathbb{Z}$). Recent constructions by Bedford-Kim \cite{bedford-kim}, Perroni-Zhang \cite{perroni-zhang}, Oguiso \cite{oguiso1}\cite{oguiso2}, and Blanc \cite{blanc} provided many interesting examples of such maps. Among bimeromorphic selfmaps, it may be argued that the class of pseudo-automorphisms is the second best after that of automorphisms. In dimension $2$, pseudo-automorphisms are automorphisms. 

One of the difficulties when studying dynamics of meromorphic maps in dimension $>2$ is that in general we can not pull back positive closed currents of bidegree $>(1,1)$. Our first main result shows that it is possible to do so for pseudo-automorphims in dimension $3$. For $0\leq p\leq 3$, let $\mathcal{D}^{p}(X)$ be the real vector space generated by positive closed $(p,p)$ currents on $X$, and let $DSH ^{p}(X)$ be the space of $DSH$ $(p,p)$ currents on $X$ (for precise definitions of these classes and their properties, see Section 2). By definition it follows that $\mathcal{D}^{p}(X)\subset DSH ^{p}(X)$. For a closed current $T$ we let $\{T\}$ denote its cohomology class.

Theorem \ref{TheoremPullbackPushforwardCurrents} below shows the possibility of pulling back or pushing forward currents by $f$, and proves the compatibility of such operators with the iteration. The definition of pulling back or pushing forward current we use here was developed in our previous paper \cite{truong1} and is refined in the current paper.   

\begin{theorem}
Let $X$ be a compact K\"ahler manifold of dimension $3$ and let $f:X\rightarrow X$ be a pseudo-automorphism. Then, with respect to Definitions  \ref{DefinitionPullbackCurrentsByMeromorphicMaps} and \ref{DefinitionPullbackDdcOfOrderSCurrents}, for any $n\in \mathbb{Z}$ there are well-defined pullback and pushforward operators  $(f^n)^*$ and $(f^n)_*$ from each of the spaces $\mathcal{D}^{1}(X)$, $DSH^{1}(X)$, and $\mathcal{D}^2(X)$ into itself. These operators are continuous with respect to the topologies on the corresponding spaces, and hence are compatible with the pullbacks or pushforwards on cohomology groups. Moreover, these operators are compatible with iteration in the sense that $(f^n)^*=(f^*)^n$ and $(f^n)_*=(f_*)^n$ for any $n\in\mathbb{Z}$. 
\label{TheoremPullbackPushforwardCurrents}\end{theorem}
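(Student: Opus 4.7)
The plan is to split the argument into three parts: well-definedness of $f^*$ and $f_*$ on each of the three spaces, continuity together with cohomological compatibility, and the iteration identity $(f^n)^*=(f^*)^n$. Since $f^{-1}$ is itself a pseudo-automorphism, the pushforward statements will follow from the pullback statements applied to $f^{-1}$, so I focus on pullback. For a positive closed $(1,1)$ current $T$, I would use the standard local-potential description: $T$ admits local plurisubharmonic potentials, and Definition \ref{DefinitionPullbackCurrentsByMeromorphicMaps} pulls these back by passing through a resolution $\pi\colon \widetilde{X}\to X$ of the graph of $f$ for which $F=f\circ\pi$ is holomorphic, setting $f^*T:=\pi_*(F^*T)$. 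The pseudo-automorphism hypothesis, ensuring that no irreducible hypersurface of $X$ is collapsed by $f$, keeps the result in $\mathcal{D}^{1}(X)$, while the DSH decomposition and Definition \ref{DefinitionPullbackDdcOfOrderSCurrents} handle $DSH^{1}(X)$ by linearity.

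For $(2,2)$ currents in dimension three, which pair against smooth $(1,1)$ test forms, the strategy is by duality: set $\langle f^*S,\alpha\rangle:=\langle S, f_*\alpha\rangle$ for every smooth $(1,1)$ form $\alpha$. The key observation is that $f_*\alpha$ is a well-defined $DSH^{1}$ current precisely because $f^{-1}$ is also a pseudo-automorphism, so no hypersurface on the target side collapses and the mass of $f_*\alpha$ is finite and controlled in cohomology. Positivity and closedness of $f^*S$ then follow from smooth approximation of $S$, and the uniform bounds needed to pass to the limit come once again from the absence of exceptional hypersurfaces for $f^{-1}$.

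Continuity of these operators in the natural topologies on $\mathcal{D}^p(X)$ and $DSH^{p}(X)$ is inherited from continuity of the classical operators $F^*$ and $\pi_*$ on the model $\widetilde{X}$, combined with the uniform cohomological bounds; at the level of cohomology classes the construction $\pi_*F^*$ is exactly the usual functorial pullback, which yields the stated cohomological compatibility. The main obstacle I anticipate is the iteration identity $(f^n)^*=(f^*)^n$. For a general meromorphic selfmap it fails because each composition can produce divisorial contributions supported on $f$-images of the exceptional set of $f^{n-1}$. By Lemma \ref{LemmaImageOfCurveByPseudoAutomorphisms} every iterate $f^n$ is again a pseudo-automorphism, so neither $f^n$ nor $f^{-n}$ has any exceptional hypersurface. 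I would then compare $(f^n)^*$ with $(f^*)^n$ by passing to a common smooth model that dominates both the graph of $f^n$ and the iterated fibered product of graphs of $f$; the absence of contracted hypersurfaces forces the divisorial discrepancy between the two to vanish. The identity then propagates to $(2,2)$ currents through the duality used in their definition, and continuity finally upgrades it from smooth forms, where it is verified directly, to the whole spaces $\mathcal{D}^{1}(X)$, $DSH^{1}(X)$, and $\mathcal{D}^{2}(X)$.
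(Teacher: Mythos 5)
Your overall skeleton matches the paper's: pushforward reduces to pullback by $f^{-1}$, well-definedness comes from the fact that the critical image has dimension $\leq 1$ (codimension $\geq 2$), the iteration identity is first checked on smooth forms and then extended by continuity, and the $(2,2)$ case is handled by duality against $(1,1)$ forms. Two points, however, are asserted where a genuine argument is needed. First, a smaller one: "well-defined with respect to Definitions \ref{DefinitionPullbackCurrentsByMeromorphicMaps} and \ref{DefinitionPullbackDdcOfOrderSCurrents}" does not mean the naive construction $\pi_*F^*$ through a resolution of the graph; it means that the limit $\lim_n f^*(\mathcal{K}_n(T))$ (resp. $\lim_n\int_Y T\wedge\mathcal{K}_n(f_*\alpha)$) exists and is independent of the good approximation scheme. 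That existence and scheme-independence is exactly Theorem \ref{TheoremInterestingExample1} (Theorem 6 of \cite{truong1}) applied with $\mathrm{codim}\,\pi_X(\mathcal{C}_Y)\geq 2$, and this is what the paper cites; your graph-resolution sketch does not by itself deliver it. Likewise, for the vanishing of the discrepancy $(f^n)^*\theta-(f^*)^n\theta$ on smooth $(1,1)$ forms, the point is not merely that no hypersurface is contracted: the discrepancy is a normal (indeed $DSH$, hence $\mathbb{C}$-normal) $(1,1)$ current supported on an analytic set of dimension $\leq 1$, and one must invoke a Federer-type support theorem (Bassanelli) to conclude it is zero.

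The more serious gap is the iteration identity on $\mathcal{D}^2(X)$. You say the identity is "verified directly" on smooth forms and then "propagates through duality," but for a smooth closed $(2,2)$ form $\eta$ the current $f^*(\eta)$ is already non-smooth (singular along curves), so $(f^*)^n(\eta)$ is an $n$-fold iterated limit over approximation schemes; there is nothing to verify "directly." After the duality reduction $\int_X(f^n)^*(\eta)\wedge\theta=\int_X\eta\wedge(f^n)_*(\theta)=\int_X\eta\wedge(f_*)^n(\theta)$, what remains is the identity $\int_X(f^*)^n(\eta)\wedge\theta=\int_X\eta\wedge(f_*)^n(\theta)$ for arbitrary smooth $(1,1)$ forms $\theta$, and this is not formal: one must do an induction on $n$, writing $(f^*)^{m+1}(\eta)=\lim_j f^*(\mathcal{K}_j((f^*)^m\eta))$, moving $f^*$ to the other side of the pairing, then using the self-adjointness of the kernels (property 6 of Definition \ref{DefinitionGoodApproximation}) to transfer $\mathcal{K}_j$ onto $f_*(\theta)$, and finally the continuity of $f_*$ on $DSH^{1}(X)$ to pass to the limit in $j$. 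This is the step the paper spends most of its effort on, and your proposal does not contain it.
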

Remark: In the case $X=\mathbb{P}^k$ a projective space, results similar to Theorem \ref{TheoremPullbackPushforwardCurrents} were proved in Dinh-Sibony \cite{dinh-sibony4} and deThelin-deVigny \cite{deThelin-deVigny}, using super-potential theory. For other manifolds, previously there were no such results. 

Next we discuss the existence of invariant positive closed currents for $f$. Since the map $f^*:\mathcal{D}^{1}(X)\rightarrow \mathcal{D}^1(X)$ preserves the cone of positive closed $(1,1)$ currents, it follows by a Perron-Frobenius type argument that $\lambda _1(f)$ is an eigenvalue of $f^*:H^{1,1}(X)\rightarrow H^{1,1}(X)$. We have the following

\begin{theorem}
Let $X$ be a compact K\"ahler manifold of dimension $3$, and $f:X\rightarrow X$ a pseudo-automorphism. Assume that $\lambda _1(f)^2>\lambda _2(f)$. Then  

a) There is a non-zero positive closed $(1,1)$ current $T^+$ such that $f^*(T^+)=\lambda _1(f)T^+$. Moreover, $T^+$ has no mass on hypersurfaces.   

b) There is a non-zero positive closed $(2,2)$ current $T^-$ such that $f_*(T^-)=\lambda _1(f)T^-$.

c) We can choose $T^+$ and $T^-$ such that in cohomology $\{T^+\}.\{T^-\}=1$. 
\label{TheoremInvariantCurrents}\end{theorem}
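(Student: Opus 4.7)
My plan for Part (a) is a Perron--Frobenius / Ces\`aro argument built on Theorem \ref{TheoremPullbackPushforwardCurrents}. By the remark preceding the statement, there is a non-zero pseudo-effective class $\alpha\in H^{1,1}(X)$ with $f^*\alpha=\lambda_1(f)\alpha$; pick a positive closed $(1,1)$ current $T_0$ representing $\alpha$ and form the Ces\`aro averages $T_N^+:=\frac{1}{N}\sum_{n=0}^{N-1}\lambda_1(f)^{-n}(f^n)^*T_0$. Each $T_N^+\in\mathcal{D}^1(X)$ by Theorem \ref{TheoremPullbackPushforwardCurrents}, has cohomology class $\alpha$, and therefore bounded mass. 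A weak subsequential limit $T^+$ is positive and closed, and the telescoping identity $f^*T_N^+=\lambda_1(f)T_N^++O(1/N)$ combined with the continuity and iteration-compatibility of $f^*$ on $\mathcal{D}^1(X)$ from Theorem \ref{TheoremPullbackPushforwardCurrents} forces $f^*T^+=\lambda_1(f)T^+$; and $\{T^+\}=\alpha\neq 0$ shows $T^+\neq 0$. For the refined description needed later, I would represent $\alpha$ by a smooth closed form $\Omega$, write $\lambda_1(f)^{-1}f^*\Omega-\Omega=dd^c\gamma$ for a DSH function $\gamma$, and rewrite $\lambda_1(f)^{-n}(f^n)^*\Omega=\Omega+dd^cU_n$ with $U_n:=\sum_{k=0}^{n-1}\lambda_1(f)^{-k}\gamma\circ f^k$; then $T^+=\Omega+dd^cU$ as soon as $U_n\to U$ in $L^1$.

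For the claim that $T^+$ has no mass on hypersurfaces, I would use the Siu decomposition $T^+=\sum_j c_j[H_j]+R$, with distinct prime hypersurfaces $H_j$, $c_j>0$, and $R$ of no positive Lelong number along any hypersurface. Since $f$ is a pseudo-automorphism, $f^*$ preserves the property of having no hypersurface mass, so the eigenvalue equation $f^*T^+=\lambda_1(f)T^+$ splits into separate identities on the hypersurface-supported part and on $R$, giving $\sum_j c_j f^*[H_j]=\lambda_1(f)\sum_j c_j[H_j]$. Writing $f^*[H_j]=[f^{-1}(H_j)]$ modulo terms without hypersurface mass, this forces the coefficients along any backward $f^{-1}$-orbit of hypersurfaces to rescale by $\lambda_1(f)$: a finite orbit yields $\lambda_1(f)^N=1$, contradicting $\lambda_1(f)>1$, while an infinite orbit makes the $c_j$ grow geometrically, contradicting finite mass. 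The main obstacle is the $L^1$-convergence of $U_n$, and this is where the hypothesis $\lambda_1(f)^2>\lambda_2(f)$ is used essentially: controlling $\|\gamma\circ f^k\|_{L^1}$ requires interpolating the $(1,1)$-mass growth of $(f^k)^*dd^c\gamma$ (of order $\lambda_1(f)^k$) against the $(2,2)$-side, which is governed by $\lambda_2(f)^k$; modelled on the super-potential calculus of Dinh--Sibony, the resulting bound of order $\lambda_2(f)^{k/2}$ makes $\sum_k\lambda_1(f)^{-k}\|\gamma\circ f^k\|_{L^1}$ convergent precisely when $\lambda_1(f)^2>\lambda_2(f)$.

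Part (b) is symmetric on $\mathcal{D}^2(X)$: by Theorem \ref{TheoremPullbackPushforwardCurrents}, $f_*$ is continuous on $\mathcal{D}^2(X)$ and preserves positivity, and Poincar\'e duality identifies $f_*$ on $H^{2,2}(X)$ with the transpose of $f^*$ on $H^{1,1}(X)$, so its spectral radius is again $\lambda_1(f)$. Running the Ces\`aro construction with $\omega\wedge\omega$ in place of $T_0$ produces weak limits of $\frac{1}{N}\sum_{n=0}^{N-1}\lambda_1(f)^{-n}(f^n)_*(\omega\wedge\omega)$ that give a non-zero positive closed $(2,2)$ current $T^-$ with $f_*T^-=\lambda_1(f)T^-$; the dual quasi-potential convergence uses the same gap hypothesis. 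For Part (c), the condition $\lambda_1(f)^2>\lambda_2(f)$ makes $\lambda_1(f)$ a simple, isolated eigenvalue of $f^*$ on $H^{1,1}(X)$ and, by transposition, of $f_*$ on $H^{2,2}(X)$. Standard linear algebra then gives that the right and left eigenvectors $\{T^+\}$ and $\{T^-\}$ of an operator and its transpose at a simple eigenvalue pair non-trivially under the natural Poincar\'e duality, so $\{T^+\}.\{T^-\}\neq 0$, and rescaling $T^+$ normalizes this product to $1$.
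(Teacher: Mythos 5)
Your Ces\`aro-mean construction of $T^+$ and $T^-$ is essentially the paper's argument (the paper starts from a K\"ahler form and uses the growth estimate $\|(f^n)^*\|_{H^{1,1}}\sim\lambda_1(f)^n$ from Theorem 1 of \cite{truong3}, while you start from the psef eigenclass; both rest on Theorem \ref{TheoremPullbackPushforwardCurrents}). But the step you single out as ``the main obstacle''---the $L^1$-convergence of $U_n=\sum_k\lambda_1(f)^{-k}\gamma\circ f^k$ via a bound $\|\gamma\circ f^k\|_{L^1}=O(\lambda_2(f)^{k/2})$---is a misdiagnosis. It is not needed for this theorem: existence of the invariant current only requires the Ces\`aro means to have bounded mass, which you already get from the fixed cohomology class. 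Worse, convergence of the full sequence $\lambda_1(f)^{-n}(f^n)^*\theta$ (equivalently, of the potentials $U_n$) is precisely the equidistribution problem of Question 1, which the paper states is open for general pseudo-automorphisms and resolves only under extra hypotheses (Theorems \ref{TheoremPositivityOfGreenCurrentsPseudoAutomorphism} and \ref{TheoremGreenCurrents}), and there by monotonicity of quasi-potentials, not by a volume estimate. Your interpolation bound is a heuristic, not a proof; the hypothesis $\lambda_1(f)^2>\lambda_2(f)$ enters the paper's proof elsewhere, namely through \cite{truong3} to control the norm growth on $H^{1,1}$.

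The second genuine gap is in the ``no mass on hypersurfaces'' argument. You assume at the outset that $f^*$ preserves the class of currents without hypersurface mass, which is essentially the statement to be proved, and the bookkeeping $f^*[H_j]=[f^{-1}(H_j)]+(\dots)$ plus the geometric growth of the coefficients $c_j$ requires a uniform lower bound on the masses $\|[H_j]\|$. The paper's route avoids this: by Siu's decomposition the Lelong numbers of $T^+$ are bounded by some $M$; by the Favre--Demailly comparison, $\nu((f^n)^*T^+,x)\le\nu(T^+,f^n(x))$ off a countable union of curves (Lemma \ref{LemmaImageOfCurveByPseudoAutomorphisms}); hence $\nu(T^+,x)=\lambda_1(f)^{-n}\nu((f^n)^*T^+,x)\le M\lambda_1(f)^{-n}$ at a generic point of any hypersurface, which kills the divisorial part. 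Your orbit argument is repairable (it is essentially Diller--Favre), but as written it is circular.

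For c) you take a genuinely different route: simplicity of $\lambda_1(f)$ as an eigenvalue plus nondegeneracy of the pairing between right and left eigenvectors at a simple eigenvalue. That is correct linear algebra, and the simplicity is available from \cite{truong3}, but it leans on that nontrivial external result. The paper's computation is more elementary: by invariance of $T^+_\theta$ and $1$-algebraic stability, $\{T^+_\theta\}.\{(f^n)_*\eta\}=\lambda_1(f)^n\,\{T^+_\theta\}.\{\eta\}$, so passing to the Ces\`aro limit gives $\{T^+_\theta\}.\{T^-_\eta\}=\{T^+_\theta\}.\{\eta\}$, which is strictly positive when $\eta$ is a strictly positive smooth form because $T^+_\theta$ is a non-zero positive closed current; one then rescales to make the product equal to $1$.
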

Remarks: 

1) The assumption that $\lambda _1(f)^2>\lambda _2(f)$ is not a real restriction. In fact, when $\lambda _1(f)>1$ this condition is satisfied for either the map $f$ or its inverse $f^{-1}$. 

2) Part a) of Theorem \ref{TheoremInvariantCurrents} is already known in the literature, however its refinement in Theorem \ref{TheoremGreenCurrents} below seems to be new. 

Compared with the results for meromorphic maps in dimension $2$ (see e.g. Diller-Favre \cite{diller-favre}, Diller-Dujardin-Guedj \cite{diller-dujardin-guedj1}), for automorphisms in any dimension (see e.g. Cantat \cite{cantat}, Dinh-Sibony \cite{dinh-sibony3}\cite{dinh-sibony4}\cite{dinh-sibony5}), for Green $(1,1)$ currents of meromorphic maps whose invariant cohomology class satisfying several conditions (see Sibony \cite{sibony}, Diller-Guedj \cite{diller-guedj}, Guedj \cite{guedj3}, Bayraktar \cite{bayraktar}), and for linear fractional maps (see Bedford-Kim \cite{bedford-kim}) we are led to the following natural questions:

{\bf Question 1.} Does $T^+$ in Theorem \ref{TheoremInvariantCurrents} satisfy an equi-distribution property, i.e. for every smooth closed $(1,1)$ form $\theta$ of the same cohomology class as $T^+$ we have
\begin{eqnarray*}
\lim _{n\rightarrow\infty}\frac{(f^n)^*(\theta )}{\lambda _1(f)^n}=T^+?    
\end{eqnarray*}
If this equi-distribution property holds, does it also hold for any smooth closed $(1,1)$ form $\theta$ for which $\int _X\theta \wedge T^-=1$? What about similar questions for $T^-$?

We can answer Question 1 in affirmative under an additional condition, whose proof will be given in Section \ref{SectionDiscussion} where we also discuss some other cases where the same idea may apply. 

\begin{theorem}
Let $X$ be a projective manifold, and let $f:X\rightarrow X$ be a dominant meromorphic map which is $1$-algebraic stable such that $\lambda _1(f)^2>\lambda _2(f)$. Assume that $f$ is holomorphic-like, i.e. it satisfies the following two conditions (i) for the eigenvector $\{\theta \}\in H^{1,1}(X)$ of eigenvalue $\lambda _1(f)$ we have $\{\theta \}.\{\theta \}=0$ and (ii) there is a desingularization $Z$ of the graph of $f$ such that the induced projection to the first factor $\pi :Z\rightarrow X$ is a composition of blowups along  smooth centers for which if $E\subset Z$ is a hypersurface then $dim (\pi (E))\geq dim(X)-2$. Then for any smooth closed $(1,1)$ form $\theta$ in the cohomology class of $\{\theta\}$ the limit
\begin{eqnarray*}
\lim _{n\rightarrow\infty}\frac{(f^*)^n(\theta )}{\lambda _1(f)^n}
\end{eqnarray*}
exists, and is the same positive closed current given in Theorem \ref{TheoremGreenCurrents} below.
\label{TheoremPositivityOfGreenCurrentsPseudoAutomorphism}\end{theorem}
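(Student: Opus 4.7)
The plan is to reduce convergence for an arbitrary smooth representative $\theta$ of the class $\{\theta\}$ to the convergence that Theorem \ref{TheoremGreenCurrents} establishes for one specific reference form. By the $dd^{c}$-lemma on the projective (hence compact K\"ahler) manifold $X$, any two smooth closed $(1,1)$ forms in $\{\theta\}$ differ by $dd^{c}\varphi$ for some $\varphi\in C^{\infty}(X,\mathbb{R})$. Fix a reference $\theta_{0}\in\{\theta\}$ for which Theorem \ref{TheoremGreenCurrents} yields $(f^{*})^{n}(\theta_{0})/\lambda_{1}(f)^{n}\to T^{+}$, and write $\theta=\theta_{0}+dd^{c}\varphi$. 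Linearity of the pullback then splits
\begin{equation*}
\frac{(f^{*})^{n}(\theta)}{\lambda_{1}(f)^{n}}=\frac{(f^{*})^{n}(\theta_{0})}{\lambda_{1}(f)^{n}}+\frac{(f^{*})^{n}(dd^{c}\varphi)}{\lambda_{1}(f)^{n}},
\end{equation*}
and the entire statement is reduced to showing that the second term tends to $0$ weakly as a current.

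The key step is then to identify $(f^{*})^{n}(dd^{c}\varphi)$ with $dd^{c}(\varphi\circ f^{n})$, where $\varphi\circ f^{n}$ denotes the bounded measurable function on $X$ defined by genuine composition on $X\setminus I(f^{n})$ and extended arbitrarily (measurably) on the indeterminacy locus. On the desingularization $Z$ of the graph of $f$ given by hypothesis (ii), with projections $\pi,\tilde{f}:Z\to X$, one has $\tilde{f}^{*}(dd^{c}\varphi)=dd^{c}(\varphi\circ\tilde{f})$ because $\tilde{f}$ is holomorphic, and $\pi_{*}$ commutes with $dd^{c}$ on currents by duality. The only question is whether Definitions \ref{DefinitionPullbackCurrentsByMeromorphicMaps} and \ref{DefinitionPullbackDdcOfOrderSCurrents} introduce an extra component supported on the exceptional set; any such component would have to be a closed $(1,1)$ current carried by an analytic subset of $X$ of codimension at least $2$, and therefore must vanish. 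This is precisely the role of the dimensional condition (ii), which combined with the $1$-algebraic stability of $f$ (so that $(f^{*})^{n}=(f^{n})^{*}$ on the space of currents under consideration) delivers the identification for every $n$.

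Once the identification is in place the conclusion is soft. Since $\varphi$ is smooth, $\|\varphi\circ f^{n}\|_{L^{\infty}(X)}\leq\|\varphi\|_{L^{\infty}(X)}$ uniformly in $n$, and the hypothesis $\lambda_{1}(f)^{2}>\lambda_{2}(f)\geq 1$ forces $\lambda_{1}(f)>1$, so $\varphi\circ f^{n}/\lambda_{1}(f)^{n}\to 0$ uniformly on $X$ and hence in $L^{1}(X)$. Continuity of $dd^{c}$ on $L^{1}$ in the sense of currents (by testing against smooth forms and moving $dd^{c}$ onto the test form) then gives
\begin{equation*}
\frac{(f^{*})^{n}(dd^{c}\varphi)}{\lambda_{1}(f)^{n}}=dd^{c}\!\left(\frac{\varphi\circ f^{n}}{\lambda_{1}(f)^{n}}\right)\longrightarrow 0
\end{equation*}
weakly as currents, which together with $(f^{*})^{n}(\theta_{0})/\lambda_{1}(f)^{n}\to T^{+}$ yields the desired limit.

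The main obstacle is the current-level identification: proving that the pullback operator of this paper, applied to the $dd^{c}$-exact form $dd^{c}\varphi$, really equals $dd^{c}$ of the composition without stray contributions on the exceptional loci, and that this identification persists under iteration. Hypothesis (i), $\{\theta\}^{2}=0$, is expected to enter not in the equidistribution argument itself but upstream in Theorem \ref{TheoremGreenCurrents}, where it is used to construct $T^{+}$ and to ensure that $T^{+}$ carries no mass on any hypersurface---the latter being exactly what makes the identification of the limit $T^+$ unambiguous across the choice of reference form $\theta_{0}$.
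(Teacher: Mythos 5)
There is a genuine gap, and it sits at the foundation of your reduction. You reduce the statement to the existence of one smooth closed reference form $\theta_0\in\{\theta\}$ for which $(f^*)^n(\theta_0)/\lambda_1(f)^n$ is already known to converge, and you cite Theorem \ref{TheoremGreenCurrents} for this. But Theorem \ref{TheoremGreenCurrents} does not provide such a $\theta_0$: it proves convergence of $(f^*)^n(T^{\min}_{\theta})/\lambda^n$, where $T^{\min}_{\theta}$ is the current with \emph{minimal singularities} in the class. The class $\{\theta\}$ is only psef here --- in the motivating examples of \cite{bedford-kim4} it is not nef --- so $T^{\min}_{\theta}$ is a genuinely singular positive current, and $\theta_0-T^{\min}_{\theta}=dd^c(-v^{\min}_{\theta})$ with $v^{\min}_{\theta}$ an \emph{unbounded} quasi-PSH function. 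Your entire mechanism rests on the potential of the difference being uniformly bounded in $L^\infty$, which is exactly what fails for the comparison that actually matters. A symptom of the problem is that your argument never really uses hypotheses (i) and (ii): if it were complete it would answer Question 1 affirmatively for every $1$-algebraic stable map with psef eigenclass, which the paper explicitly states is open and which the Bedford--Kim examples indicate cannot be done by soft arguments of this kind. (Your within-class comparison of two \emph{smooth} representatives via $dd^c\varphi$ with $\varphi$ smooth is the standard and correct observation that the limit, if it exists, is independent of the smooth representative; it just does not produce the existence of the limit.)

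The paper's proof runs through a different mechanism, and this is where (i) and (ii) enter. Using Lemma 4 of \cite{truong3} one writes, in cohomology, $f^*\{\theta\}.f^*\{\theta\}=\pi_*(g^*(\{\theta\}.\{\theta\}))+\sum_j l(g^*(\theta),D_j)\{D_j\}$ with nonnegative coefficients $l$; the eigenvector equation together with hypothesis (i) ($\{\theta\}.\{\theta\}=0$) forces every $l(g^*(\theta),D_j)=0$, and hypothesis (ii) then upgrades this to $\{g^*\theta\}.\{C\}=0$ for every $\pi$-exceptional curve $C$. That is precisely the hypothesis of Lemma \ref{LemmaPositivityOfGreenCurrents} (the Diller--Guedj/Bayraktar criterion), which yields that the quasi-potentials of $f^*(\theta)$ are bounded \emph{from above}. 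From there Sibony's trick applies: the normalized potentials of $(f^*)^n(\theta)/\lambda_1(f)^n$ form a decreasing sequence of quasi-PSH functions, and the divergence to $-\infty$ is ruled out by comparison with an invariant current in the class, exactly as in the proof of Theorem \ref{TheoremGreenCurrents}. If you want to salvage your outline, the missing ingredient is precisely this upper bound on the potentials of $f^*(\theta)$ for a smooth $\theta$ in a non-nef class; nothing in your proposal supplies it.
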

Note that in the case $f$ is a holomorphism then the two conditions (i) and (ii) are automatically satisfied, and in this case the result is known in the literature. While condition (i) or some variant of it seems essential we feel that condition (ii) is not needed, see Section \ref{SectionDiscussion} for more discussion on this.   

In general Question 1 is still open, and seems a difficult one. The examples in \cite{bedford-kim4} show that the usual criteria used to prove the equi-distribution property for the Green $(1,1)$ currents (see e.g. \cite{diller-guedj}, \cite{guedj3}, \cite{bayraktar}) are not applicable to a general pseudo-automorphism in dimension $3$. In fact, in the examples in \cite{bedford-kim4}, the psef eigenvector $\alpha\in H^{1,1}(X)$ with eigenvalue $\lambda _1(f)$ of $f^*:H^{1,1}(X)\rightarrow H^{1,1}(X)$ is not nef, and moreover $\alpha .C<0$ for some curve $C\subset f(I_f)$ where $I_{f}$ is the indeterminacy set of $f$. In this aspect, the following result, which provides a canonical Green $(1,1)$ current, and which whenever the equi-distribution property is satisfied is the same as the limit $T^+$ in Question 1, seems relevant. The canonical Green current $T$ constructed in Theorem \ref{TheoremGreenCurrents} is also maximal among invariant currents, in the sense that if $S$ is a positive closed $(1,1)$ current such that $f^*(S)=\lambda _1(f)S$ and $S\leq T$ then $S=cT$ for some constant $c$.

\begin{theorem}
Let $X$ be a compact K\"ahler manifold, and $f:X\rightarrow X$ a dominant meromorphic map. Let $\lambda >1$ be an eigenvalue of $f^*:H^{1,1}(X)\rightarrow H^{1,1}(X)$. Assume that the eigenvector $\theta $ corresponding to $\lambda$ is a psef class. Then there is a positive closed $(1,1)$ current $T$ such that $\{T\}=\theta$, $f^*(T)=\lambda T$, and $T$ is maximal among invariant currents. Moreover
\begin{eqnarray*}
T=\lim _{n\rightarrow\infty}\frac{1}{\lambda ^n}(f^*)^n(T_{\theta}^{\min}),
\end{eqnarray*}
where $T_{\theta}^{\min}$ is a positive closed $(1,1)$ current with minimal singularities whose cohomology class is $\theta$ (see the proof for precise definition of currents with minimal singularities). 
\label{TheoremGreenCurrents}\end{theorem}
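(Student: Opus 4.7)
The plan is to realize $T$ as the maximal fixed point of the induced pull-back recursion on quasi-psh potentials of representatives of $\theta$, and then to identify it with the asserted weak limit.

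Fix a smooth closed $(1,1)$ form $\theta_0$ representing $\theta$. Since $\theta$ is psef, the family $\mathcal F:=\{\varphi\in L^1(X,\mathbb R):\varphi\leq 0,\ \theta_0+dd^c\varphi\geq 0\}$ is non-empty, and I set $\varphi_{\min}:=(\sup_{\varphi\in\mathcal F}\varphi)^*$ (upper semicontinuous regularization), so $T_{\theta}^{\min}:=\theta_0+dd^c\varphi_{\min}$ is a positive closed current of minimal singularities in the class $\theta$: any representative $\theta_0+dd^c\psi\geq 0$ with $\sup\psi\leq 0$ satisfies $\psi\leq\varphi_{\min}$. Because $\lambda^{-1}f^*\theta_0$ again lies in the class $\theta$, pick a quasi-psh $g$ with $\lambda^{-1}f^*\theta_0=\theta_0+dd^c g$. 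Writing $T_n:=\lambda^{-n}(f^*)^n T_{\theta}^{\min}=\theta_0+dd^c\varphi_n$, the pull-back rule for potentials yields the recursion $\varphi_{n+1}=g+\lambda^{-1}\varphi_n\circ f$, and I choose the normalization constants at each step so that $\varphi_n\leq\varphi_{\min}$ (possible by minimality).

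Next I form the decreasing sequence of upper envelopes $\psi_n:=(\sup_{k\geq n}\varphi_k)^*$, each $\theta_0$-psh and $\leq\varphi_{\min}$, and set $\psi_\infty:=\lim_n\psi_n$ and $T:=\theta_0+dd^c\psi_\infty$. Taking $\sup_{k\geq n}$ of the recursion and regularizing gives $\psi_{n+1}=g+\lambda^{-1}\psi_n\circ f$ off a pluripolar set (absorbed by the regularization); passing to the limit $n\to\infty$ yields $\psi_\infty=g+\lambda^{-1}\psi_\infty\circ f$, which is equivalent to $f^*T=\lambda T$. To identify $T$ with $\lim_n T_n$, weak compactness in the fixed class $\theta$ produces subsequential limits $T'$, which are invariant by continuity of $f^*$ on bounded families of positive closed $(1,1)$ currents (Meo, Dinh--Sibony); the associated potential $\psi'$ is a $\theta_0$-psh fixed point of the recursion with $\psi'\leq\varphi_{\min}$, and induction gives $\psi'\leq\psi_n$ for every $n$, hence $\psi'\leq\psi_\infty$. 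A maximality argument (below) then forces $\psi'=\psi_\infty$, so the entire sequence converges to $T$.

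For maximality, suppose $S$ is a positive closed $(1,1)$ current with $f^*S=\lambda S$ and $S\leq T$. Writing $\{S\}=c\{T\}$ (using that the psef eigenray for $\lambda$ is one-dimensional, by Perron--Frobenius on the preserved cone) and reducing by scaling to $c=1$, the function $h:=\psi_\infty-\psi_S$ satisfies $T-S=dd^c h\geq 0$, so $h$ is plurisubharmonic on the compact K\"ahler manifold $X$ and therefore constant by the maximum principle. The invariance relation $h\circ f=\lambda h$ then forces $h=0$ since $\lambda\neq 1$, yielding $S=T$, and thus $S=cT$ in the general case.

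The main obstacle is the upgrade from subsequential to full-sequence convergence $T_n\to T$: while compactness and the continuity of $f^*$ on bounded families of currents are standard, verifying that every subsequential limit's potential is a \emph{global} fixed point of the recursion (rather than merely fixed after further averaging) requires careful tracking of normalization constants and pluripolar exceptional sets in the passage $\psi_{n+1}=g+\lambda^{-1}\psi_n\circ f$. The decisive rigidity at both the convergence and maximality steps comes from the maximum principle for psh functions on the compact K\"ahler manifold $X$; without this compact-manifold input, the argument would break down.
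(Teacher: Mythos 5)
Your construction never rules out the degenerate case $\psi_\infty\equiv-\infty$, and this is precisely the heart of the theorem. You define $T:=\theta_0+dd^c\psi_\infty$ for a decreasing limit of quasi-psh envelopes, but a decreasing sequence of $\theta_0$-psh functions bounded above by $0$ either converges in $L^1$ or tends uniformly to $-\infty$ (Hartogs), and nothing in your argument excludes the second alternative — in which case $T$ does not exist and the whole scheme collapses. The paper closes exactly this gap with Sibony's trick: it first produces \emph{some} invariant current $S=\theta+dd^cu$ with $f^*S=\lambda S$ by taking a cluster point of Ces\`aro means $\frac 1N\sum_{j}\lambda^{-j}(f^*)^jR$, normalizes $u$ against the recursion, derives the closed formula $\phi_n=u+\lambda^{-n}v^{\min}_\theta\circ f^n-\lambda^{-n}u\circ f^n$, and then uses the defining extremality of $v^{\min}_\theta$ (namely $u\le v^{\min}_\theta+C$) to get the uniform lower bound $\phi_n\ge u-C\lambda^{-n}$. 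That lower bound is what forces $L^1$ convergence of the unnormalized potentials and hence existence and canonicity of $T$. Your proposal contains no analogue of this step.

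Two further points. First, the paper does not need your upper-envelope regularization $\psi_n=(\sup_{k\ge n}\varphi_k)^*$ at all: since $\phi_1\le v^{\min}_\theta$ by minimality, the sequence $\phi_n$ is itself decreasing, which is both simpler and avoids the delicate (and unaddressed) issue of whether usc regularization commutes with composition by the meromorphic map $f$ in the passage $\psi_{n+1}=g+\lambda^{-1}\psi_n\circ f$. You should prove the monotonicity of $\varphi_n$ directly rather than manufacture it. Second, your appeal to ``continuity of $f^*$ on bounded families of positive closed $(1,1)$ currents'' is not valid: Meo's pullback is not weakly continuous on such families in general (it is well behaved along decreasing sequences of potentials, which is another reason the monotone setup matters), and the claim that the psef eigenray of $\lambda$ is one-dimensional by Perron--Frobenius is unjustified — fortunately it is also unnecessary, since once $\{S\}=\{T\}$ the relation $T-S=dd^ch\ge0$ already forces $h$ constant and $S=T$; your extra step ``$h\circ f=\lambda h$ forces $h=0$'' is vacuous for constant $h$.
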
  
Note that in Theorem \ref{TheoremGreenCurrents}, we do not require that $f$ is $1$-algebraic stable or any additional condition on the eigenvector $\theta$ (such as K\"ahler, nef,...). Theorem \ref{TheoremGreenCurrents} essentially belongs to Sibony \cite{sibony}, Guedj \cite{guedj3} and Bayraktar \cite{bayraktar}. In fact, our proof is almost identical to that given for Theorem 1.2 in \cite{bayraktar} (the latter in turn followed closely that given for Theorem 2.2 in \cite{guedj3}); however it appears from the comments in those papers (e.g. Remark 4.1 after the proof of Theorem 1.2 in \cite{bayraktar}) that these authors did not aware of this. In the case $X=\mathbb{P}^k$ a projective space, Theorem \ref{TheoremGreenCurrents} was proved by Sibony \cite{sibony}. Since the theorem in the general setting has not yet appeared anywhere in the literature, we include it here for completeness. 

Part c) of Theorem \ref{TheoremInvariantCurrents} provides Green $(1,1)$ and $(2,2)$ currents $T^+$ and $T^-$  of a pseudo-automorphism with $\{T^+\}.\{T^-\}>0$. Hence, if we can make sense the wedge product of the currents $T^+$ and $T^-$, then the (signed) measure $\mu =T^+\wedge T^-$ is a good candidate for an invariant measure of $f$. Since the currents $T^{+}$ and $T^-$ are limits of Cesaro's means of currents of the form $(f^n)^*(\theta )/\lambda _1(f)^n$ and $(f^n)_*(\eta )/\lambda _1(f)^n$ for positive closed smooth $(1,1)$ and $(2,2)$ forms $\theta $ and $\eta$, the following result is relevant.   

\begin{theorem}

Let $f:X\rightarrow Y$ be a pseudo-automorphism in dimension $3$. Let $T$ be a positive closed $(1,1)$ current, and let $\eta$ be a smooth closed $(2,2)$ form. Then the intersections $T\wedge f_*(\eta )$ is well-defined with respect to Definition \ref{DefinitionIntersectionCurrents}. 

If in the above both $T$ and $\eta$ are positive then the resulting measure is also positive. If moreover $T$ has no mass on hypersurfaces then $T\wedge (f^n)_*(\eta )$ has no mass on proper analytic subvarieties of $X$, and
\begin{eqnarray*}
T\wedge (f^n)_*(\eta )=(f^n)_*((f^n)^*(T)\wedge \eta ). 
\end{eqnarray*} 
\label{TheoremIntersection11And22Currents}\end{theorem}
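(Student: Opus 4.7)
\emph{Proof proposal.} The plan is to establish the product formula
\begin{equation*}
T \wedge f_*(\eta) \;=\; f_*\bigl(f^*(T) \wedge \eta\bigr),
\end{equation*}
treating the right-hand side (which is manifestly defined: $f^*(T)$ wedged with a smooth form is an unambiguous signed measure, and $f_*$ is available by Theorem~\ref{TheoremPullbackPushforwardCurrents}) as the interpretation of the left-hand side in the sense of Definition~\ref{DefinitionIntersectionCurrents}. First I would regularize $T$ by a sequence of smooth closed $(1,1)$ forms $T_m$ converging weakly to $T$, for instance via Demailly's regularization (absorbing the slight loss of positivity into the $DSH$ framework of Section~2). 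For smooth $T_m$ both $T_m \wedge f_*(\eta)$ and $f_*(f^*(T_m) \wedge \eta)$ are classical (signed) measures. Since $f$ is a pseudo-automorphism, $f$ and $f^{-1}$ have indeterminacy of codimension at least $2$, and $f$ restricts to a biholomorphism off these loci; on this complement the classical projection formula gives $T_m \wedge f_*(\eta) = f_*(f^*(T_m) \wedge \eta)$, and since neither smooth measure charges a codimension-$\geq 2$ set, the identity extends globally.

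Next, I would pass to the limit $m \to \infty$. The continuity of $f^*$ on $\mathcal{D}^1(X)$ (and more generally on $DSH^1(X)$) from Theorem~\ref{TheoremPullbackPushforwardCurrents} yields $f^*(T_m) \to f^*(T)$ weakly; since $\eta$ is smooth, $f^*(T_m) \wedge \eta \to f^*(T) \wedge \eta$ weakly as measures. Applying (a mild extension to signed measures of) the continuity of $f_*$ on $\mathcal{D}^2(X)$, also from Theorem~\ref{TheoremPullbackPushforwardCurrents}, one obtains $f_*(f^*(T_m) \wedge \eta) \to f_*(f^*(T) \wedge \eta)$. Hence $T_m \wedge f_*(\eta)$ converges weakly to a limit independent of the regularizing sequence, which by Definition~\ref{DefinitionIntersectionCurrents} is precisely what it means for $T \wedge f_*(\eta)$ to be well-defined, and the product formula holds.

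The remaining claims then follow almost at once. If $T, \eta \geq 0$, then $f^*(T) \geq 0$, the wedge with the smooth positive form $\eta$ is a positive measure, and pushforward preserves positivity. Replacing $f$ by $f^n$—itself a pseudo-automorphism by Lemma~\ref{LemmaImageOfCurveByPseudoAutomorphisms}—and invoking compatibility of pullback and pushforward with iteration (Theorem~\ref{TheoremPullbackPushforwardCurrents}) yields the displayed identity for $f^n$. Under the further hypothesis that $T$ carries no mass on hypersurfaces, this property is preserved under pseudo-automorphism pullback (no hypersurface is collapsed or created), so $(f^n)^*(T)$ is diffuse; wedging with the smooth $(2,2)$ form $\eta$ produces a positive measure on the $3$-fold $X$ putting no mass on any proper analytic subvariety (a standard fact for trace-type measures of $(1,1)$ positive closed currents without hypersurface mass), and pushforward by $f^n$—a biholomorphism off a codim-$\geq 2$ set—inherits this property. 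The step I expect to be the principal obstacle is the limiting argument for $f_*$ applied to the signed measures $f^*(T_m) \wedge \eta$, since Theorem~\ref{TheoremPullbackPushforwardCurrents} supplies continuity on $\mathcal{D}^2(X)$ rather than on general measures; one will likely need to exploit the smoothness of $\eta$ to control the positive and negative parts directly, or to phrase the regularization inside $DSH^{1}(X)$ where continuity is available and write $f^*(T_m)\wedge \eta$ as a $DSH^{2}$ object before pushing forward.
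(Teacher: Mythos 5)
Your overall strategy --- declare $T\wedge f_*(\eta)$ to be $f_*(f^*(T)\wedge \eta)$, verify this for smooth regularizations of $T$, and pass to the limit --- founders exactly at the step you flag: the passage to the limit under $f_*$. The pushforward of a measure by a meromorphic map, $(f_*\mu)(B)=\mu(f^{-1}(B)\setminus \mathcal{I}(f))$, is not weakly continuous (mass of $f^*(T_m)\wedge\eta$ can concentrate near $\mathcal{I}(f)$ or near curves blown down by $f$, and its image under $f$ need not converge to anything related to $f_*(f^*(T)\wedge\eta)$), and Theorem \ref{TheoremPullbackPushforwardCurrents} gives continuity of $f_*$ only on $\mathcal{D}^2(X)$ and $DSH^1(X)$, never on $(3,3)$-currents. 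Rephrasing the regularization ``inside $DSH^1$'' does not help, because $f^*(T_m)\wedge\eta$ is already of top degree before $f_*$ is applied. There is a second, more formal gap: Definition \ref{DefinitionIntersectionCurrents} demands convergence of $\mathcal{K}_j(T)\wedge f_*(\eta)$ for \emph{every} good approximation scheme in the sense of Definition \ref{DefinitionGoodApproximation}, and a Demailly regularization is not such a scheme, so a limit computed along it would not by itself establish well-definedness. Finally, the identity $T\wedge f_*(\eta)=f_*(f^*(T)\wedge\eta)$ is asserted (and proved) only when $T$ has no mass on hypersurfaces; adopting it as the definition for arbitrary positive closed $T$ claims more than the statement.

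The paper's proof avoids pushing measures forward altogether. It fixes an arbitrary good approximation scheme, notes that $\mu_j^{\pm}=\mathcal{K}_j^{\pm}(T)\wedge f_*(\eta)$ are positive measures of uniformly bounded mass, and identifies every cluster point by pairing with a smooth test function $\beta$: the exact adjunction $\int_X \beta\,\mathcal{K}_j(T)\wedge f_*(\eta)=\int_X f^*(\beta\,\mathcal{K}_j(T))\wedge \eta$ holds for continuous forms by the graph definition of $f_*(\eta)$, and since $\beta\,\mathcal{K}_j(T)\rightarrow \beta T$ in $DSH^1(X)$ (these currents are DSH but not closed, which is precisely why Theorem \ref{TheoremPullbackPushforwardCurrents} is stated for $DSH^1$ and not only for $\mathcal{D}^1$), the continuity of $f^*$ on $DSH^1$ yields $\langle \mu,\beta\rangle=\int_X f^*(\beta T)\wedge\eta$, independent of the scheme and of the subsequence. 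Positivity then follows from $f^*(\beta T)=f^*(\beta)f^*(T)$ via Theorem \ref{TheoremPullbackCompatibleWithWedgeProduct}, and the product formula is deduced a posteriori, under the no-mass hypothesis, by checking that the two measures agree off a proper analytic set, that neither charges such sets, and that they have equal total mass. To salvage your route you would have to replace the continuity of $f_*$ on measures by some version of this duality argument; as written, that step fails.
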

Remarks:

1) Since the current $(f^n)_*(\eta )$ may not be smooth on some curves where $T$ may have positive Lelong numbers, it is not obvious that we can define the wedge product $T\wedge (f^n)_*(\eta )$ intrinsically on $X$ in a reasonable way.

2) In contrast, if instead $\theta$ is a positive closed smooth $(1,1)$ form and $S$ is a positive closed $(2,2)$ current then $(f^n)^*(\theta )\wedge S$ may not be defined, or even when it can be defined the resulting may not be a positive measure. For example, consider $X$ the blowup of $\mathbb{P}^3$ at four points $[1:0:0:0]$, $[0:1:0:0]$, $[0:0:1:0]$ and $[0:0:0:1]$. Let $f:\mathbb{P}^3\rightarrow \mathbb{P}^3$ be the map $f[x_0:x_1:x_2:x_3]=[1/x_0:1/x_1:1/x_2:1/x_3]$, and let $F:X\rightarrow X$ be the lifting of $f$. Then $F$ is a pseudo-automorphism. If $C\subset X$ is the strict transform of the line $x_0=x_1=0$ and $D$ is the strict transform of the line $x_2=x_3=0$, then in cohomology $F_*\{C\}=-\{D\}$ (in \cite{truong1} it was proved that in fact the equality also holds on the level of currents: $F_*[C]=-[D]$). Now let $\theta$ be a K\"ahler form on $X$. Then even if we may define the wedge product $F^*(\theta )\wedge [C]$, the resulting current can not be a positive measure because in cohomology $F^*\{\theta\}.\{C\}=\{\theta\}.F_*\{C\}=-\{\theta\}.\{D\}<0$.   

In a forthcoming paper we will study some further properties of pseudo-automorphims in dimension $3$.

The rest of this paper is organized as follows. In Section 2 we recall definitions and results on positive and DSH currents, dynamical degrees, and known results on pseudo-automorphisms in dimension $3$. In Section 3 we prove a property of quasi-potentials of positive closed currents and a compatibility with wedge product of the kernels of Dinh and Sibony. In Section 4 we recall the definition of pullback of currents by meromorphic maps from \cite{truong1}, gives the definition of intersection of currents, and prove several general results. In Section 5 we apply the previous results to obtain results about pseudo-automorphisms in dimension $3$. In the last section we prove Theorem \ref{TheoremPositivityOfGreenCurrentsPseudoAutomorphism} and also discuss how Question 1 may be answered in affirmative. 

{\bf Acknowledgements.} The author is thankful to Eric Bedford for his kindly informing the results in \cite{bedford-cantat-kim}\cite{bedford-diller-kim}\cite{bedford-kim4} and for useful comments on a first version of this paper. He would like to thank Tien-Cuong Dinh and Viet-Anh Nguyen for some helpful conversations on this topic. He also would like to thank Keiji Oguiso, Turgay Bayraktar, Roland Roeder and \"Ozcan Yazici for their useful comments on a first version of the paper and information. Part of the work was done when the author was visiting University of Paris 6 (UPMC) and University of Paris 11 (Orsay), and he thanks these organizations for hospitality and financial support. 

\section{Preliminaries}

In this section we present briefly definitions and previous known results on positive closed and $DSH$ currents, dynamical degrees, and pseudo-automorphisms in dimension $3$. 

In this section only, let $X$ be a compact K\"ahler manifold of arbitrary dimension $k$ with a K\"ahler $(1,1)$ form $\omega _X$.

\subsection{Positive currents, $DSH$ currents}

For more details on positive currents the readers are referred to Lelong's book \cite{lelong} and Demailly's book \cite{demailly}, and for more details on $DSH$ currents the readers are referred to the paper Dinh-Sibony \cite{dinh-sibony1}. 

Given $0\leq p\leq k$, a smooth $(p,p)$ form $\varphi$ on $X$ is called strongly positive if locally it can be written as a convex combination of smooth forms of the type $i\gamma _1\wedge \overline{\gamma _1}\wedge \ldots \wedge i\gamma _p\wedge \overline{\gamma _p}$. 

A smooth $(p,p)$ form $\varphi$ is called (weakly) positive if for any strongly positive smooth $(k-p,k-p)$ form $\psi$, then $\varphi \wedge \psi$ is a positive measure.

A smooth $(p,p)$ form $\varphi$ is called strictly positive if locally $\varphi \geq \omega ^p$, where $\omega$ is a K\"ahler $(1,1)$ form.

A $(p,p)$ current $T$ is (weakly) positive if for any strongly positive smooth $(k-p,k-p)$ form $\psi$ then $T\wedge \psi$ is a positive measure.

A $(p,p)$ current $T$ is strongly positive if for any weakly positive smooth $(k-p,k-p)$ form $\psi$ then $T\wedge \psi$ is a positive measure.
 
Note that strongly and weakly positivity coincide for currents of bidegree $(0,0)$, $(1,1)$, $(k-1,k-1)$ and $(k,k)$. Therefore, if $dim(X)=3$, then strongly and weakly positivity coincide.     

For a positive $(p,p)$ current $T$, we define its mass by $||T||=<T,\omega _X^{k-p}>$. 

A current $T$ is called positive closed if it is both positive and closed. For a positive closed current $T$, its mass depends only on its cohomology class. We denote by $\mathcal{D}^p$ the real vector space generated by positive closed currents. Hence each current $T$ in $\mathcal{D}^p(X)$ can be written as $T=T^+-T^-$ for some positive closed $(p,p)$ currents $T^{\pm}$. We define the $\mathcal{D}^p$ norm of such a $T$ as follows: $||T||_{\mathcal{D}^p}=\min \{||T^+||+||T^-||\}$, where the minimum is taken on all positive closed $(p,p)$ currents $T^{\pm}$  for which $T=T^+-T^-$. We define convergence on $\mathcal{D}^p$ as follows: If $T_n$ and $T$ are in $\mathcal{D}^p$, we say that $T_n$ converges in $\mathcal{D}^p$ if $T_n$ weakly converges to $T$ in the sense of currents, and moreover $||T_n||_{\mathcal{D}^p}$ is bounded.

A $(p,p)$ current $T$ is called $DSH$ if we can find positive $(p,p)$ currents $T_1,T_2$, and positive closed $(p-1,p-1)$ currents $\Omega _1^{\pm},\Omega _2^{\pm}$ for which $T=T_1-T_2$, $dd^cT_{i}=\Omega _i^+-\Omega _i^-$ for $i=1,2$. Denote by $DSH^p(X)$ the set of $DSH$ $(p,p)$ currents on $X$. We define a norm on $DSH^p(X)$ as follows: If $T$ is in $DSH^p(X)$ then
\begin{eqnarray*}
||T||_{DSH^p}=\min \{||T_1||+||T_2||+||\Omega _1^+||+||\Omega _2^+||\},
\end{eqnarray*}   
where the minimum is taken on all decompositions $T=T_1-T_2$, $dd^cT_i=\Omega _i^+-\Omega _i^-$ of $T$. We define the convergence in $DSH$ as follows: If $T_n$ and $T$ are in $DSH^p$, we say that $T_n$ converges in $DSH^p$ if $T_n$ weakly converges to $T$ in the sense of currents, and moreover $||T_n||_{DSH^p}$ is bounded. 

\subsection{Regularization of $DSH$ currents}

In \cite{dinh-sibony1}, Dinh and Sibony obtained a good regularization of $DSH$ currents on a compact K\"ahler manifolds, which gives for any $DSH$ $(p,p)$ current $T$ a sequence of positive smooth $DSH$ currents $T_n^{\pm}$ with uniformly bounded masses so that $T_n^+-T_n^-$ weakly converges to $T$ (see also Section 3). Combining the results in \cite{truong1} and Lemma \ref{TheoremRegularizationCompactibleWithWedgproduct} in Section 3, we obtain the existence of good approximation  schemes by $C^{s}$ forms for $DSH$ currents, whose definitions are given below
\begin{definition}
Let $s\geq 0$ be an integer. We define a good approximation scheme by $C^s$ forms for $DSH$ currents on $X$ to be an assignment that for a $DSH$ current $T$ gives two sequences $\mathcal{K}_n^{\pm}(T)$ (here $n=1,2,\ldots $) where $\mathcal{K}_n^{\pm}(T)$ are $C^s$ forms of the same bidegrees as $T$, so that $\mathcal{K}_n(T)=\mathcal{K}_n^+(T)-\mathcal{K}_n^-(T)$  weakly converges to $T$, and moreover the following properties are satisfied:

1) Boundedness: If $T$ is $DSH$ then the $DSH$ norms of  $\mathcal{K}_n^{\pm}(T)$ are uniformly bounded.

2) Positivity: If $T$ is positive then $\mathcal{K}_n^{\pm}(T)$ are positive, and $||\mathcal{K}_n^{\pm}(T)||$ is uniformly bounded with respect to n.

3) Closedness: If $T$ is positive closed then $\mathcal{K}_n^{\pm}(T)$ are positive closed.  

4) Continuity: If $U\subset X$ is an open set so that $T|_U$ is a continuous form then $\mathcal{K}_n^{\pm}(T)$ converges locally uniformly on $U$.

5) Linearity: For any pair of currents $T_1$ and $T_2$, we have $\mathcal{K}_n^{\pm}(T_1+T_2)=\mathcal{K}_n^{\pm}(T_1)+\mathcal{K}_n^{\pm}(T_2)$.

6) Self-Adjointness: If $T$ and $S$ are of complement bidegrees then 
\begin{eqnarray*}
\int _X\mathcal{K}_n(T)\wedge S=\int _YT\wedge \mathcal{K}_n(S),
\end{eqnarray*}
for any $n\in \mathbb{N}$. 

7) Compatibility with the differentials: $dd^c\mathcal{K}_n^{\pm}(T)=\mathcal{K}_n^{\pm}(dd^cT)$.

8) Convergence of supports: If $A$ is compact and $U$ is an open neighborhood of $A$, then there is $n_0=n_0(U,A)$ such that if the support of $T$ is contained in $A$ and $n\geq n_0$ then support $\mathcal{K}_n(U)$ is contained in $U$.

9) Compatibility with wedge product: Let $T$ be a $DSH$ $(p,p)$ current and let $\theta$ be a continuous $(q,q)$ form on $X$. Assume that there is a positive $dd^c$-closed current $R$ so that $-R\leq T\leq R$. Then there are positive $dd^c$-closed $(p+q,p+q)$ currents $R_n$ so that $\lim _{n\rightarrow\infty}||R_n||=0$ and 
\begin{eqnarray*}
-R_n\leq \mathcal{K}_n(T\wedge \theta )-\mathcal{K}_n(T)\wedge \theta\leq R_n, 
\end{eqnarray*}
for all $n$. 

If $R$ is strongly positive or closed then we can choose $R_n$ to be so.
\label{DefinitionGoodApproximation}\end{definition}

In fact, Let $K_n$ be the weak regularization for the diagonal $\Delta _Y$ as in Section 3. Let $l$ be a large integer dependent on $s$, and let $(m_1)_n,\ldots ,(m_l)_n$ be sequences of positive integers satisfying $(m_i)_n=(m_{l+1-i})_n$ and $\lim _{n\rightarrow\infty}(m_i)_n=\infty$ for any $1\leq i\leq l$. In \cite{truong1} we showed that if we choose $\mathcal{K}_n=K_{(m_1)_n}\circ K_{(m_2)_n}\circ \ldots \circ K_{(m_l)_n}$ then it satisfies conditions 1)-8). Remark \ref{Remark1} in Section 3 shows that it also satisfies condition 9).

\subsection{Dynamical degrees and algebraic stability}

Let $f:X\rightarrow X$ be a dominant meromorphic map. It is well-known that we can define the pullback $f^*$ on smooth forms and on cohomology groups (see Section 4 for more detail). For $0\leq p\leq k=dim (X)$, the $p$-th dynamical degree of $f$ is defined by
\begin{eqnarray*}
\lambda _p(f)=\lim _{n\rightarrow\infty}||(f^n)^*(\omega _X^p)||^{1/n}.
\end{eqnarray*}
Dinh and Sibony (\cite{dinh-sibony10} and \cite{dinh-sibony1}), showed that the dynamical degrees are well-defined (i.e. the limits in the definition exist), and are bimeromorphic invariants.

Some properties of dynamical degrees: $\lambda _p(f)\geq 1$ for all $p\geq 1$, and  $\lambda _p(f)^2\geq \lambda _{p-1}(f)\lambda _{p+1}(f)$ (log-concavity). 

When $f$ is holomorphic, the results by Gromov \cite{gromov} and Yomdin \cite{yomdin} prove that the topological entropy $h_{top}(f)$ of $f$ equals $\max _{0\leq p\leq k}\log \lambda _p(f)$. For a general meromorphic map, we can still define its topological entropy. Dinh and Sibony \cite{dinh-sibony1}, proved that $h_{top}(f)\leq \max _{0\leq p\leq k}\log \lambda _p(f)$.  

Given $0\leq p\leq k$. We say that $f^*$ is $p$-algebraic stable if for any $n\in \mathbb{N}$, $(f^n)^*=(f^*)^n$ as linear maps on $H^{p,p}(X)$ (see Fornaess-Sibony \cite{fornaess-sibony1}). We can define similar notion for the pushforward $f_*$. 

\subsection{Pseudo-automorphisms in dimension $3$}
\label{SubsectionPseudoAutomorphimsDimension3}

Let now $X$ be a compact K\"ahler manifold of dimension $3$. Let $f:X\rightarrow X$ be a pseudo-automorphism with the graph $\Gamma _f\subset X\times X$. Let $\mathcal{I}(f)$ and $\mathcal{I}(f^{-1})$ be the indeterminacy sets of $f$ and $f^{-1}$. Then it follows that $f:X-\mathcal{I}(f)\rightarrow X-\mathcal{I}(f^{-1})$ is biholomorphic. Recall that $\mathcal{C}_{1}$ and $\mathcal{C}_2$ are the critical sets for the projections $\pi _1,\pi _2:\Gamma _f\rightarrow Y$, i.e. smallest analytic subsets of $\Gamma _f$ so that the restrictions $\pi _1:\Gamma _f-\mathcal{C}_1\rightarrow X$ and $\pi _2:\Gamma _f-\mathcal{C}_2\rightarrow X$ are finite-to-one maps. 
 
\begin{lemma}
a) The sets $\pi _i(\mathcal{C}_j)$ have dimensions $\leq 1$ for $i,j=1,2$.

b) For any analytic set $C$ in $X$ of dimension $\leq 1$, $f(C)$ and $f^{-1}(C)$ are analytic sets of dimensions $\leq 1$.   

c) For any $n\in \mathbb{Z}$, the maps $f^n$ are also pseudo-automorphisms.
\label{LemmaImageOfCurveByPseudoAutomorphisms}\end{lemma}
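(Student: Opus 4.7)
The plan is to deduce everything from the definition of a pseudo-automorphism (no exceptional hypersurfaces for $f$ or $f^{-1}$) combined with Remmert's proper mapping theorem.

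For part (a), I would let $\Gamma_f$ denote a desingularization of the graph (or note the graph is already smooth enough in the relevant sense). Since $\pi_1,\pi_2:\Gamma_f\to X$ are proper bimeromorphisms of compact complex manifolds of dimension $3$, their critical sets $\mathcal{C}_1,\mathcal{C}_2$ are purely $2$-dimensional exceptional loci, and the images $\pi_i(\mathcal{C}_i)$ are analytic of dimension $\leq 1$ (in fact $\pi_1(\mathcal{C}_1)=\mathcal{I}(f)$ and $\pi_2(\mathcal{C}_2)=\mathcal{I}(f^{-1})$, both of codimension $\geq 2$). The nontrivial cases are $\pi_1(\mathcal{C}_2)$ and $\pi_2(\mathcal{C}_1)$. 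For each irreducible hypersurface component $\widetilde H\subset \mathcal{C}_2$, if $\pi_1(\widetilde H)$ were a hypersurface $H\subset X$, then $\pi_1:\widetilde H\to H$ would be generically finite, so through a generic $x\in H$ the fiber $\pi_1^{-1}(x)\cap\widetilde H$ is finite and its $\pi_2$-image equals $f(x)$. Since $\pi_2(\widetilde H)=\pi_2(\mathcal{C}_2)\cap(\text{dim }\leq 1)$, the set $f(H)$ has dimension $\leq 1$, contradicting the pseudo-automorphism hypothesis on $f$. The argument for $\pi_2(\mathcal{C}_1)$ is the symmetric one using $f^{-1}$.

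For part (b), I would express $f(C)=\pi_2(\pi_1^{-1}(C))$, which is analytic by Remmert's theorem. To bound the dimension, I decompose $\pi_1^{-1}(C)$ into the strict transform of $C$ (dimension $\leq 1$) plus components contained in $\mathcal{C}_1$ that lie over $C\cap \pi_1(\mathcal{C}_1)$. A potentially $2$-dimensional component $Z$ of the second type is contained in $\mathcal{C}_1$, so by part (a) $\pi_2(Z)\subset \pi_2(\mathcal{C}_1)$ has dimension $\leq 1$. The strict transform contributes a piece of $f(C)$ of dimension $\leq 1$, so $f(C)$ has dimension $\leq 1$. The same argument with the roles of $\pi_1,\pi_2$ interchanged handles $f^{-1}(C)$.

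For part (c), by replacing $f$ with $f^{-1}$ it suffices to prove the forward statement for $n\geq 1$. The key claim, which I would establish first, is that if $H\subset X$ is an irreducible hypersurface that is not exceptional for $f$, then $f(H)$ is also a hypersurface: indeed $f(H)=\pi_2(\pi_1^{-1}(H))$ is analytic of dimension $\leq 2$, and dimension $\leq 1$ would make $H$ exceptional. I would then argue by induction: suppose $f^n(H)$ has dimension $\leq 1$ and choose the smallest $j\geq 1$ with $\dim f^j(H)\leq 1$; by minimality and the claim, $f^{j-1}(H)$ is an irreducible hypersurface, and $f(f^{j-1}(H))=f^j(H)$ has dimension $\leq 1$, exhibiting $f^{j-1}(H)$ as an exceptional hypersurface for $f$, a contradiction. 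Hence $f^n$ has no exceptional hypersurface, and it is bimeromorphic since $f$ is, so $f^n$ is a pseudo-automorphism.

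The main obstacle is the bookkeeping in part (a) and (b) around the fact that $\mathcal{C}_i$ may contain components of various dimensions and that $\pi_1^{-1}(C)$ need not be equidimensional; once one systematically invokes ``no exceptional hypersurface'' to rule out $2$-dimensional images in $X$, the dimension bounds follow. Part (c) is then a clean induction from part (b) together with the hypersurface-preservation claim.
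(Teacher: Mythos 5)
Your proof is correct, but it takes a genuinely different route from the paper's. The paper's entire proof rests on the single structural fact, recorded just before the lemma, that $f:X\setminus\mathcal{I}(f)\to X\setminus\mathcal{I}(f^{-1})$ is biholomorphic: part (a) then follows in one line because $\pi_2$ is injective on the part of $\Gamma_f$ lying over $X\setminus\mathcal{I}(f)$, so $\mathcal{C}_2\subset\pi_1^{-1}(\mathcal{I}(f))$ and hence $\pi_1(\mathcal{C}_2)\subset\mathcal{I}(f)$ (a stronger statement than the dimension bound); part (b) is again read off from the biholomorphism; and part (c) removes the set $I_n(f)=\bigcup_{j=0}^n(f^{-1})^j(\mathcal{I}(f))$, of dimension $\leq 1$ by (b), on whose complement all iterates are biholomorphic. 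You instead argue bottom-up from the defining condition of no exceptional hypersurfaces: for (a) you rule out a $2$-dimensional image $\pi_1(\widetilde H)$ by exhibiting a contracted hypersurface, for (b) you decompose $\pi_1^{-1}(C)\cap\Gamma_f$ into the strict transform plus components forced into $\mathcal{C}_1$ and invoke (a), and for (c) you track strict transforms of hypersurfaces forward by induction. Your route buys self-containedness --- you never need the biholomorphism fact, which the paper asserts without proof --- at the cost of more dimension bookkeeping. Two small points to tighten: the critical sets $\mathcal{C}_i$ need not be purely $2$-dimensional (only the $2$-dimensional components matter, as you acknowledge at the end); and in (a) the identification of $\pi_2\bigl(\pi_1^{-1}(x)\cap\widetilde H\bigr)$ with $f(x)$ should be justified by observing that a generic $x\in H$ lies outside $\mathcal{I}(f)$ (which has codimension $\geq 2$ independently of the lemma), where the fiber of $\pi_1$ in all of $\Gamma_f$ is the single point $(x,f(x))$; with that observation $\widetilde H$ is forced to be the strict transform of $H$, and the contradiction with the non-exceptionality of $H$ goes through.
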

\begin{proof}

a) We prove the claim e.g. for $i=1$ and $j=2$. Since $f:X-\mathcal{I}(f)\rightarrow X-\mathcal{I}(f^{-1})$ is biholomorphic, it follows that $\pi _1(\mathcal{C}_2)$ is contained in $\mathcal{I}(f)$, and the latter has dimension $\leq 1$.

b) This also follows from the fact that $f:X-\mathcal{I}(f)\rightarrow X-\mathcal{I}(f^{-1})$ is biholomorphic. 

c) Since $f^{-1}$ is also a pseudo-automorphism, it suffices to prove the claim for $n\in \mathbb{N}$. Given $n\in \mathbb{N}$, we define $I_n(f)=\bigcup _{j=0}^n(f^{-1})^j(\mathcal{I}(f))$. By b), $I_n(f)$ is an analytic set of dimension $\leq 1$. We have $f:X-I_n(f)\rightarrow f(X-I_n(f))$ is biholomorphic, and then by induction $f^j:X-I_n(f)\rightarrow f^j(X-I_n(f))$ is biholomorphic (for $2\leq j\leq n$). The sets $f^j(X-I_n(f))$ are complements of analytic sets of dimensions $\leq 1$, by b).  Hence $f^n$ is a pseudo-automorphism. 
\end{proof}

The following result was given in Propositions 1.3 and 1.4 in \cite{bedford-kim}. For the completeness we give a proof of it here.   
\begin{lemma}
1) The maps $f_*$ and $f^*$ are all $1$- and $2$- algebraic stable. 

2) $f_*f^*=Id$ on $H^{1,1}(X)$ and $H^{2,2}(X)$. In particular, $(f^{-1})^*=f_*:H^{1,1}(X)\rightarrow H^{1,1}(X)$ is the inverse of $f^*:H^{1,1}(X)\rightarrow H^{1,1}(X)$.

3) For $\theta \in H^{1,1}(X)$ and $\eta \in H^{2,2}(X)$ then $f^*\theta .f^*\eta =\theta .\eta  $. 

\label{LemmaPseudoAutomorphismCompatibility}\end{lemma}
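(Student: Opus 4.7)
The plan is to work on a common desingularization $\pi:Z\to \Gamma_f$ with $Z$ a smooth compact K\"ahler manifold, setting $p_i=\pi_i\circ \pi:Z\to X$, so that $p_1,p_2$ are proper bimeromorphic morphisms. On cohomology one then has $f^*\alpha=(p_1)_*p_2^*\alpha$ and $f_*\alpha=(p_2)_*p_1^*\alpha$, and two applications of the projection formula give the adjointness $\int_X f_*\beta\wedge\alpha=\int_X \beta\wedge f^*\alpha$ for complementary bidegrees. The decisive geometric input, to be established first, is that for a pseudo-automorphism the exceptional loci coincide: $\mathrm{Exc}(p_1)=\mathrm{Exc}(p_2)$. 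Indeed, a prime divisor $E\subset Z$ lying in $\mathrm{Exc}(p_1)\setminus \mathrm{Exc}(p_2)$ would force $p_2(E)$ to be a hypersurface of $X$ contracted by $f^{-1}$, contradicting the absence of exceptional hypersurfaces for $f^{-1}$; the opposite inclusion is symmetric.

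I would prove 3) next by a direct computation. Using $f^*=(p_1)_*p_2^*$ and the projection formula for $p_1$,
\begin{eqnarray*}
\int_X f^*\theta \wedge f^*\eta =\int_Z p_1^*(p_1)_*(p_2^*\theta)\wedge p_2^*\eta.
\end{eqnarray*}
Since the kernel of $(p_1)_*:H^{1,1}(Z)\to H^{1,1}(X)$ is spanned by the classes $[E_i]$ of the components of $\mathrm{Exc}(p_1)$, we may write $p_1^*(p_1)_*(p_2^*\theta)=p_2^*\theta+\sum_i a_i[E_i]$ for some real $a_i$. The main term contributes $\int_Z p_2^*(\theta\wedge\eta)=\int_X\theta\wedge\eta$ since $p_2$ is bimeromorphic. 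Each correction term equals $a_i\int_X\eta\wedge (p_2)_*[E_i]$ by the projection formula for $p_2$, and vanishes: the first step gives $E_i\subset \mathrm{Exc}(p_2)$, so $p_2(E_i)$ has dimension $\leq 1$, whence $(p_2)_*[E_i]=0$ in $H^{1,1}(X)$. This yields $f^*\theta\cdot f^*\eta=\theta\cdot\eta$.

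Part 2) then follows from 3) by duality. For $\theta\in H^{1,1}(X)$ and any $\eta\in H^{2,2}(X)$, the adjointness above combined with 3) gives
\begin{eqnarray*}
(f_*f^*\theta)\cdot\eta=f^*\theta\cdot f^*\eta=\theta\cdot\eta,
\end{eqnarray*}
and non-degeneracy of the Poincar\'e pairing $H^{1,1}(X)\times H^{2,2}(X)\to\mathbb{C}$ forces $f_*f^*\theta=\theta$; the argument on $H^{2,2}(X)$ is identical. The identity $(f^{-1})^*=f_*$ on cohomology is immediate by interchanging the two factors in the graph definition, so $(f^{-1})^*$ is the inverse of $f^*$ on $H^{1,1}(X)$.

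Finally, for 1) the standard criterion for $1$-algebraic stability of $f^*$ is that no hypersurface $V\subset X$ is mapped by any iterate $f^k$ into $\mathcal{I}(f)$. By Lemma \ref{LemmaImageOfCurveByPseudoAutomorphisms} every $f^k$ is a pseudo-automorphism, so does not contract hypersurfaces; hence $f^k(V)$ is again a hypersurface while $\mathcal{I}(f)$ has dimension $\leq 1$, excluding the containment. The same argument applied to $f^{-1}$ gives $1$-algebraic stability of $f_*$. Taking Poincar\'e-duality adjoints, $2$-algebraic stability of $f^*$ on $H^{2,2}(X)$ is equivalent to $1$-algebraic stability of $f_*$ on $H^{1,1}(X)$, already proved, and symmetrically for $f_*$. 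The hard part throughout is establishing $\mathrm{Exc}(p_1)=\mathrm{Exc}(p_2)$; all the subsequent vanishings of exceptional correction terms rest on this equality, which itself requires the pseudo-automorphism property in both directions.
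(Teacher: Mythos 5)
Your proof is correct, but it takes a genuinely different route from the paper's. The paper argues at the level of currents: for a smooth closed $(1,1)$ form $\theta$, the two currents $(f^n)^*(\theta)$ and $(f^*)^n(\theta)$ (and likewise $f_*f^*(\theta)$ and $\theta$) agree outside an analytic set of dimension $\leq 1$ by Lemma \ref{LemmaImageOfCurveByPseudoAutomorphisms}, so their difference is a normal current supported in real dimension $\leq 2$ and hence vanishes by the Federer-type support theorem; the cohomological identities follow by passing to classes, with the order of deduction being 1), then 2) on $H^{1,1}$, then 3), then 2) on $H^{2,2}$. You instead work purely cohomologically on a resolution $Z$ of the graph, with the key geometric input $\mathrm{Exc}(p_1)=\mathrm{Exc}(p_2)$ (a clean reformulation of the pseudo-automorphism hypothesis), prove 3) first by expanding $p_1^*(p_1)_*(p_2^*\theta)$ modulo exceptional classes and killing the corrections via $(p_2)_*[E_i]=0$, deduce 2) by Poincar\'e duality, and get 1) from the standard contraction criterion for $1$-algebraic stability. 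Both arguments are sound. The paper's version buys more: the identities are established as equalities of currents, not just of cohomology classes, and this stronger form is reused later (the same support-theorem argument reappears in the proof of Theorem \ref{TheoremPullbackPushforwardCurrents} for $DSH$ currents). Your version makes the geometric mechanism more transparent but leans on two quoted facts that deserve a word each: the description of $\ker\bigl((p_1)_*:H^{1,1}(Z)\to H^{1,1}(X)\bigr)$ as the span of the exceptional divisor classes requires choosing $Z$ so that $p_1$ is a composition of blowups along smooth centers (available by Hironaka), and the stability criterion you invoke for 1) is itself proved by essentially the same graph-resolution computation, so quoting it is legitimate but hides where the work is done. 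Neither point is a gap.
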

\begin{proof}
1) Let $\theta $ be a smooth closed $(1,1)$ form. Then $(f^n)^*(\theta )$ and $(f^*)^n(\theta )$ differ only on the set $I_n(f)=\bigcup _{j=0^n}(f^{-1})^j(\mathcal{I}(f))$. Since the latter set is analytic of dimension $\leq 1$ by Lemma \ref{LemmaImageOfCurveByPseudoAutomorphisms}, it can not contain mass for the normal current $(f^n)^*(\theta )-(f^*)^n(\theta )$. Therefore the two currents $(f^n)^*(\theta )$ and $(f^*)^n(\theta )$ are the same. Passing to cohomology we obtain that $f^*$ is $1$-stable. 

Because $f_*=(f^{-1})^*$, it follows that $f_*$ is $1$-stable. 

Since $f^*:H^{1,1}(X)\rightarrow H^{1,1}(X)$ and $f_*:H^{2,2}(X)\rightarrow H^{2,2}(X)\rightarrow H^{2,2}(X)$ are conjugates, it follows that $f_*$ is $2$-stable. Similarly, $f^*$ is $2$-stable. 

2) The proof is similar to that of 1). Let $\theta$ be a closed smooth $(1,1)$ form. Then $f_*f^*(\theta )$ and $\theta$ differ only on an analytic set of dimension $\leq 1$, and hence must be the same. Passing to cohomology we obtain the claim for $H^{1,1}(X)$. By the conjugate property and 3) we obtain the claim for $H^{2,2}(X)$.  

3) From the conjugate property and 2) we have $f^*\theta .f^*\eta =f_*f^*\theta .\eta =\theta .\eta$.  
\end{proof}

\section{Quasi-potentials and regularization kernels for DHS currents}

Let $Y$ be a compact Kahler manifold of dimension $k$. Let $\pi _1,\pi _2:Y\times Y\rightarrow Y$ be the two projections, and let $\Delta _Y\subset Y\times Y$ be the diagonal. Let $\omega _Y$ be a K\"ahler $(1,1)$ form on $Y$. As before, let $DSH^{p}(Y)$ be the space of DSH $(p,p)$ currents.

Recall that a function $\varphi$ is quasi-PSH if it is upper semi-continuous, belongs to $L^1$, and $dd^c(\varphi )=T-\theta$, where $T$ is a positive closed $(1,1)$ current and $\theta$ is a closed smooth $(1,1)$ form. We also call $\varphi$ a $\theta$-plurisubharmonic function. 
\begin{remark}
The following consideration from \cite{bost-gillet-soule} and \cite{dinh-sibony5} is used in both proof of Lemma \ref{LemmaQuasiPotential} and the construction of the kernels $K_n$ in Lemma \ref{TheoremRegularizationCompactibleWithWedgproduct}. Let $k=$ dimension of $Y$. Let $\pi :\widetilde{Y\times Y}\rightarrow Y\times Y$ be the blowup of $Y\times Y$ at $\Delta _Y$. Let $\widetilde{\Delta} _Y=\pi ^{-1}(\Delta _Y)$ be the exceptional divisor. Then there is a closed smooth $(1,1)$ form $\gamma$ and a negative quasi-plurisubharmonic function $\varphi$ so that $dd^c\varphi =[\widetilde{\Delta} _Y]-\gamma $. We choose a strictly positive closed smooth $(k-1,k-1)$ form $\eta $ so that $\pi _*([\widetilde{\Delta} _Y]\wedge \eta )=[\Delta _Y]$. 
\label{Remark2}\end{remark}

A useful tool in proving the results in Section 4 is the following, concerning the quasi-potentials of a positive closed $(p,p)$ current $T$ on a compact K\"ahler manifold $Y$. It is known that (see Dinh and Sibony\cite{dinh-sibony5}, Bost, Gillet and Soule\cite{bost-gillet-soule}) there is a $DSH$ (p-1,p-1) current $S$ and a closed smooth form $\alpha$ so that $T=\alpha +dd^cS$. Here $S$ is a difference of two negative currents. When $p=1$ or when $Y$ is a projective space, we can choose $S$ to be negative. However in general we can not choose $S$ to be negative (see \cite{bost-gillet-soule}). The following weaker conclusion is sufficient for the purpose of this paper

\begin{lemma}
Let $T$ be a positive closed $(p,p)$ current on a compact K\"ahler manifold $Y$. Then there is a closed smooth $(p,p)$ form $\alpha$ and a negative $DSH$ $(p-1,p-1)$ current $S$ so that 
\begin{eqnarray*}
T\leq \alpha +dd^cS.
\end{eqnarray*}
Moreover, there is a constant $C>0$ independent of $T$ so that $||\alpha ||_{L^{\infty}}\leq C||T||$ and $||S||\leq C||T||$. If $T$ is strongly positive then we can choose $S$ to be strongly negative.
\label{LemmaQuasiPotential}\end{lemma}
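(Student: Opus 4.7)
The plan is to apply the reproducing property of the diagonal, lifted to the blowup $\widetilde{Y\times Y}$ of Remark~\ref{Remark2}, to $T$ and to extract the required negative quasi-potential from the decomposition $[\widetilde{\Delta}_Y]=\gamma+dd^c\varphi$. Writing $\tilde{\pi}_i:=\pi_i\circ\pi$ for $i=1,2$, the identity
\begin{eqnarray*}
T=(\pi_1)_*(\pi_2^*T\wedge[\Delta_Y])=(\tilde{\pi}_1)_*(\tilde{\pi}_2^*T\wedge[\widetilde{\Delta}_Y]\wedge\eta),
\end{eqnarray*}
together with $[\widetilde{\Delta}_Y]=\gamma+dd^c\varphi$ and the closedness of $\tilde{\pi}_2^*T$ and $\eta$, yields $T=\alpha_0+dd^cS$, where
\begin{eqnarray*}
\alpha_0:=(\tilde{\pi}_1)_*(\tilde{\pi}_2^*T\wedge\gamma\wedge\eta),\qquad S:=(\tilde{\pi}_1)_*(\tilde{\pi}_2^*T\wedge\varphi\wedge\eta).
\end{eqnarray*}

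The current $S$ is my candidate negative DSH quasi-potential. Since $\varphi\leq 0$, $\eta$ is smooth strongly positive, and $\tilde{\pi}_2^*T$ is positive (pullback of a positive current under a holomorphic map), the integrand is negative; proper pushforward preserves this sign, so $S\leq 0$. It is a DSH current because $dd^c\varphi=[\widetilde{\Delta}_Y]-\gamma$ is the difference of a positive closed current and a smooth form, so the integrand sits inside the DSH cone, and pushforward preserves being DSH. The mass bound $\|S\|\leq C\|T\|$ follows from $\varphi\in L^1(\widetilde{Y\times Y})$, the boundedness of $\eta$, and $\|\tilde{\pi}_2^*T\|\leq C\|T\|$. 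To obtain a smooth dominator of $\alpha_0$, I would split the smooth $(1,1)$ form $\gamma$ on the compact blowup as $\gamma=\gamma^+-\gamma^-$ with $\gamma^+:=\gamma+C_0\tilde{\omega}$ and $\gamma^-:=C_0\tilde{\omega}$ both smooth strongly positive (for $\tilde{\omega}$ a K\"ahler form on $\widetilde{Y\times Y}$ and $C_0$ large). This decomposes $\alpha_0=\alpha_0^+-\alpha_0^-$ into non-negative currents, hence $T\leq\alpha_0^++dd^cS$. A mass estimate, using that $\gamma^+\wedge\eta$ is smooth on the compact blowup, gives $\|\alpha_0^+\|\leq C\|T\|$, and a standard comparison of weakly positive currents then allows me to replace $\alpha_0^+$ by a closed smooth form $\alpha:=C'\|T\|\omega_Y^p$ satisfying $\alpha\geq\alpha_0^+$ and $\|\alpha\|_{L^\infty}\leq C\|T\|$, concluding $T\leq\alpha+dd^cS$.

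For the strongly-positive refinement, I would choose $\eta$ to be a sum of elementary wedges $i\xi_1\wedge\bar{\xi}_1\wedge\cdots\wedge i\xi_{k-1}\wedge\bar{\xi}_{k-1}$, so that strong positivity of $T$ propagates through $\tilde{\pi}_2^*$, through wedging with $\eta$, through multiplication by $\varphi\leq 0$ (producing a strongly negative form), and through the pushforward along the blowup-plus-submersion $\tilde{\pi}_1$, which must be checked against the explicit local model of $\pi$. The step I expect to be the main obstacle is the last one in the construction of $\alpha$: the current $\alpha_0^+$ is not a priori a smooth form, and the associated integral kernel $\pi_*(\gamma^+\wedge\eta)$ on $Y\times Y$ is smooth off $\Delta_Y$ but has singularities along the diagonal stemming from the blowup Jacobian, so upgrading the mass estimate $\|\alpha_0^+\|\leq C\|T\|$ to a pointwise weakly positive domination $\alpha_0^+\leq C'\|T\|\omega_Y^p$ requires exploiting both the positivity of $\tilde{\pi}_2^*T\wedge\gamma^+\wedge\eta$ and the specific structure of $\eta$ that cancels the worst singular behavior.
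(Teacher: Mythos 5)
Your first step coincides with the paper's: wedge $\pi_2^*T$ against the decomposition $[\Delta_Y]=\Phi^++dd^cH-\Phi^-$ obtained by pushing $[\widetilde{\Delta}_Y]=\gamma+dd^c\varphi$ down through $\eta$, yielding $T\leq R_1^++dd^cS_1$ with $S_1$ negative and $R_1^+$ positive closed. But there is a genuine gap at exactly the step you flag as "the main obstacle," and your proposed fix (that the structure of $\eta$ cancels the diagonal singularity so that $\alpha_0^+$ can be dominated pointwise by $C'\|T\|\omega_Y^p$) does not work. The kernel $\Phi^+=\pi_*(\gamma^+\wedge\eta)$ genuinely has a singularity of order $|y_1-y_2|^{-(2k-2)}$ along $\Delta_Y$, so $\alpha_0^+=R_1^+$ is only an $L^1$ form: for instance if $T=[V]$ is the current of integration on a subvariety, then $R_1^+(y)$ behaves like $\int_V|y-z|^{-(2k-2)}$ and blows up as $y$ approaches $V$. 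No "standard comparison of weakly positive currents" dominates an unbounded $L^1$ positive form by a smooth one; a mass bound $\|\alpha_0^+\|\leq C\|T\|$ is far from a pointwise bound.

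The missing idea in the paper's proof is a bootstrapping iteration: apply the same kernel construction to $R_1^+$ in place of $T$, obtaining $R_1^+\leq R_2^++dd^cS_2$ where the convolution with the $L^1$ kernel improves the integrability exponent from $L^1$ to $L^{1+1/(2k+2)}$ (with $S_2$ negative and all masses controlled by $C\|T\|$, via Lemma 2.1 of Dinh--Sibony). After finitely many passes the dominating form becomes continuous, the negative potentials accumulate into a single negative DSH current $S=S_1+S_2+\cdots+S_m$, and only at that point can the continuous dominant, having $\|\cdot\|_{L^\infty}\leq C\|T\|$, be replaced by the closed smooth form $C'\|T\|\omega_Y^p$. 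Without this iteration your argument stalls at an $L^1$ dominant and the conclusion does not follow.
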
   
Here $||.||_{L^{\infty}}$ is the maximum norm of a continuous form and $||.||$ is the mass of a positive or negative current.

\begin{proof} (Of Lemma \ref{LemmaQuasiPotential})
Notations are as in Remark \ref{Remark2}. Define $H=\pi _*(\varphi \eta )$. Then $H$ is a negative $(k-1,k-1)$ current on $Y\times Y$.  

We write $\gamma =\gamma ^+-\gamma ^-$ for strictly positive closed smooth $(1,1)$ forms $\gamma ^{\pm}$. If we define $\Phi ^{\pm}=\pi _*(\gamma ^{\pm}\wedge \eta )$ then $\Phi ^{\pm}$ are positive closed $(k,k)$ currents with $L^1$ coefficients. In fact (see \cite{dinh-sibony1}) $\Phi ^{\pm}$ are smooth away from the diagonal $\Delta _Y$, and the singularities of $\Phi ^{\pm}(y_1,y_2)$ and their derivatives are bounded by $|y_1-y_2| ^{-(2k-2)}$ and $|y_1-y_2|^{-(2k-1)}$. Moreover 
\begin{eqnarray*} 
dd^cH=\pi _*(dd^c\varphi \wedge \eta )=\pi _*([\widetilde{\Delta} _Y ]\wedge \eta -(\gamma ^+-\gamma ^-)\wedge \eta )=[\Delta _Y]-(\Phi ^+-\Phi ^-).
\end{eqnarray*} 
Consider $S_1=(\pi _1)_*(H\wedge \pi _2^*(T))$ and $R_1^{\pm}=(\pi _1)_*(\Phi ^{\pm}\wedge T)$. Then $S_1$ is a negative current, and $R_1^{\pm}$ are positive closed currents. Moreover 
\begin{eqnarray*}
dd^cS_1=(\pi _1)_*(dd^cH\wedge \pi _2^*(T))=T-R_1^++R_1^-.
\end{eqnarray*}
Therefore $T\leq R_1^++dd^cS_1$. Moreover $R_1^+$ is a current with $L^1$ coefficients, and there is a constant $C_1>0$ independent of $T$ so that $||S_1||, ||R_1||_{L^1}\leq C_1||T||$ (see e.g. Lemma 2.1 in \cite{dinh-sibony1}). 

If we apply this process for $R_1^+$ instead of $T$ we find a positive closed current $R_2^+$ with coefficients in $L^{1+1/(2k+2)}$ and a negative current $S_2$ so that $R_1^+\leq R_2^++dd^cS_2$. Moreover 
\begin{eqnarray*}
||R_2^+||_{L^{1+1/(2k+2)}},||S_2||\leq C_2||R_1^{+}||_{L^1}\leq C_1C_2||T||
\end{eqnarray*}
for some constant $C_2>0$ independent of $T$. After iterating this process a finite number of times we find a continuous form $R$ and a negative current $S$ so that $T\leq R+dd^cS$. Moreover, $||R||_{L^{\infty}},||S||\leq C||T||$ for some constant $C>0$ independent of $T$. Since we can bound $R$ by $\omega _Y^p$ upto a multiple constant of size $||R||_{L^{\infty}}$, we are done.   
 \end{proof}
Next we recall the construction of the kernels $K_n$ from Section 3 in \cite{dinh-sibony1}. Notations are as in Remark \ref{Remark2}. Observe that $\varphi $ is smooth out of $[\widetilde{\Delta _Y}]$, and $\varphi ^{-1}(-\infty )=\widetilde{\Delta _Y}$. Let $\chi :\mathbb{R}\cup \{-\infty\} \rightarrow \mathbb{R}$ be a smooth increasing convex function such that $\chi (x)=0$ on $[-\infty ,-1]$, $\chi (x)=x$ on $[1, +\infty ]$, and $0\leq \chi '\leq 1$.  Define $\chi _n(x)=\chi (x+n)-n$, and $\varphi _n=\chi _n\circ \varphi $. The functions $\varphi _n$ are smooth decreasing to $\varphi$, and $dd^c \varphi _n\geq -\Theta $ for every $n$, where $\Theta$ is a strictly positive closed smooth $(1,1)$ form so that $\Theta -\gamma$ is strictly positive. Then we define $\Theta _n^+=dd^c \varphi _n+\Theta $ and $\Theta _n^-=\Theta ^-=\Theta -\gamma$. Finally $K_n^{\pm}=\pi _*(\Theta _n^{\pm}\wedge \eta )$, and $K_n=K_n^{+}-K_n^-$.     

If $K$ is a current on $Y\times Y$ and $T$ a current on $Y$, we define $K(T)=(\pi _1)_*(K\wedge \pi _2^*(T))$, whenever the wedge product $K\wedge \pi _2^*(T)$ makes sense. 
\begin{lemma} Let $Y$ be a compact K\"ahler manifold. Let $K_n$ be a weak regularization of the diagonal $\Delta _Y$ defined in \cite{dinh-sibony1} (see Section 2 for more detail). Let $T$ be a $DSH$ $(p,p)$ current and let $\theta$ be a continuous $(q,q)$ form on $Y$. Assume that there is a positive $dd^c$-closed current $R$ so that $-R\leq T\leq R$. Then there are positive $dd^c$-closed $(p+q,p+q)$ currents $R_n$ so that $\lim _{n\rightarrow\infty}||R_n||=0$ and 
\begin{eqnarray*}
-R_n\leq K_n(T\wedge \theta )-K_n(T)\wedge \theta\leq R_n, 
\end{eqnarray*}
for all $n$. 

If $R$ is strongly positive or closed then we can choose $R_n$ to be so. 
\label{TheoremRegularizationCompactibleWithWedgproduct}\end{lemma}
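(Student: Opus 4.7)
The plan is to reduce the problem to computations on $Y\times Y$ via the projection formula and then exploit the identity $K_n=[\Delta_Y]+dd^c u_n$ (with $u_n$ a positive current of vanishing mass) together with the vanishing of $\pi_2^*\theta-\pi_1^*\theta$ on the diagonal. The first step is to establish this identity. From the definitions $K_n^{\pm}=\pi_*(\Theta_n^\pm\wedge\eta)$ with $\Theta_n^+=dd^c\varphi_n+\Theta$ and $\Theta_n^-=\Theta-\gamma$, one gets
\[
K_n^+-K_n^-=\pi_*\bigl((dd^c\varphi_n+\gamma)\wedge\eta\bigr).
\]
Substituting $dd^c\varphi_n+\gamma=[\widetilde{\Delta}_Y]+dd^c(\varphi_n-\varphi)$ and using $\pi_*([\widetilde{\Delta}_Y]\wedge\eta)=[\Delta_Y]$ yields $K_n=[\Delta_Y]+dd^c u_n$ with $u_n:=\pi_*\bigl((\varphi_n-\varphi)\eta\bigr)$, a positive $(k-1,k-1)$ current on $Y\times Y$ (since $\varphi_n\geq\varphi$ and $\eta$ is strongly positive) with $\|u_n\|\to 0$ by monotone convergence. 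The projection formula then gives
\[
D_n:=K_n(T\wedge\theta)-K_n(T)\wedge\theta=(\pi_1)_*\bigl(K_n\wedge\pi_2^*T\wedge(\pi_2^*\theta-\pi_1^*\theta)\bigr),
\]
and since $\pi_1^*\theta=\pi_2^*\theta$ along $\Delta_Y$ the $[\Delta_Y]$-contribution drops out, so the effective kernel factor in $D_n$ is $dd^c u_n$.

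Second, I would build a pointwise dominating positive $dd^c$-closed current. The hypothesis $-R\leq T\leq R$ lets me write $T=(R+T)-R$ with both summands positive and dominated by $2R$, and I split $\theta=\theta_+-\theta_-$ into strongly positive continuous forms with $\|\theta_\pm\|_\infty\lesssim\|\theta\|_\infty$. By linearity it suffices to treat $T\geq 0$, $T\leq R$, and $\theta$ strongly positive continuous. Decomposing $K_n=K_n^+-K_n^-$ into its positive closed parts and bounding $\pm(\pi_2^*\theta-\pi_1^*\theta)\leq\pi_1^*\theta+\pi_2^*\theta\leq C\|\theta\|_\infty(\pi_1^*\omega_Y^q+\pi_2^*\omega_Y^q)$ gives the pointwise current bound $\pm D_n\leq\widehat R$, where
\[
\widehat R:=C\|\theta\|_\infty\,(\pi_1)_*\Bigl((K_n^++K_n^-)\wedge\pi_2^*R\wedge(\pi_1^*\omega_Y^q+\pi_2^*\omega_Y^q)\Bigr).
\]
This is positive and $dd^c$-closed because every factor in the wedge is closed ($K_n^\pm$, $\omega_Y$) or $dd^c$-closed ($R$); the strongly positive or closed refinement of $R$ propagates to $\widehat R$ because $\omega_Y^q$ is strongly positive and closed.

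Third, to force $\|R_n\|\to 0$ I would exploit the combined smallness coming from the concentration of $K_n$ along $\Delta_Y$ and the vanishing of $\pi_2^*\theta-\pi_1^*\theta$ there. Given $\epsilon>0$, uniform continuity of $\theta$ furnishes $\delta>0$ with $|\pi_2^*\theta-\pi_1^*\theta|\leq\epsilon(\pi_1^*\omega_Y^q+\pi_2^*\omega_Y^q)$ on the $\delta$-tube $U_\delta$ about $\Delta_Y$; off $U_\delta$ the smooth forms $K_n^\pm$ tend to $0$ uniformly because $\varphi_n\to\varphi$ uniformly outside every neighborhood of $\widetilde{\Delta}_Y$. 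Splitting the wedge integrand over $U_\delta$ and its complement and letting $\delta=\delta_n\to 0$ slowly enough, the contribution from $U_{\delta_n}$ is dominated by $\epsilon_n\widehat R$ with $\epsilon_n\to 0$ (by the second-step bound with $\epsilon_n$ in place of $C\|\theta\|_\infty$), while the complementary contribution is a current of vanishing mass absorbed into a closed positive bound of small mass via the same push-forward construction. Assembling these yields $R_n$ positive and $dd^c$-closed with $\|R_n\|\to 0$. The main obstacle is precisely this balancing: $dd^c$-closedness forces $R_n$ to be built as a push-forward of a wedge of closed or $dd^c$-closed factors whose cohomology classes are rigid, so the vanishing-mass property has to be extracted as an overall scalar $\epsilon_n$ coming from the concentration of $K_n$ on $\Delta_Y$ together with the modulus of continuity of $\theta$, and arranging this as a genuine current-theoretic (not merely mass) domination is the delicate point.
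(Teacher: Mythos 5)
Your overall architecture -- dominate $K_n(T\wedge\theta)-K_n(T)\wedge\theta$ by the push-forward $(\pi_1)_*\bigl((K_n^++K_n^-)\wedge\pi_2^*R\wedge(\pi_1^*\omega_Y^q+\pi_2^*\omega_Y^q)\bigr)$ times a small scalar extracted from the modulus of continuity of $\theta$ near the diagonal -- is exactly the paper's, and your second step reproduces the paper's dominating current. But the decisive smallness step rests on a false claim: you assert that ``off $U_\delta$ the smooth forms $K_n^{\pm}$ tend to $0$ uniformly because $\varphi_n\to\varphi$ uniformly outside every neighborhood of $\widetilde{\Delta}_Y$.'' They do not. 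Away from the diagonal $dd^c\varphi=-\gamma$, so $\Theta_n^+=dd^c\varphi_n+\Theta$ converges there to $\Theta-\gamma=\Theta_n^-$, and both $K_n^+$ and $K_n^-$ converge to the \emph{same nonzero} smooth form $\pi_*(\Theta^-\wedge\eta)$ outside any tube around $\Delta_Y$. Consequently your plan to ``absorb'' the off-tube contribution ``into a closed positive bound of small mass via the same push-forward construction'' cannot work as written: the positive envelope $K_n^++K_n^-$ has mass bounded below off the tube, and there is no small positive $dd^c$-closed current dominating $\pm K_n\wedge\pi_2^*T\wedge(\pi_2^*\theta-\pi_1^*\theta)$ manufactured from the individual $K_n^{\pm}$ there.

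The correct (and simpler) fact, which the paper uses, is that the \emph{difference} $K_n=K_n^+-K_n^-$ vanishes identically outside the $\delta$-tube once $n\geq n_\delta$: since $\chi_n(x)=x$ for $x\geq 1-n$, one has $\varphi_n=\varphi$ wherever $\varphi\geq 1-n$, hence $\Theta_n^+=\Theta_n^-$ exactly there. So the off-tube contribution to $D_n$ is exactly zero, no absorption is needed, and the whole estimate reduces to: restrict the integral to $\{|y-z|<\delta\}$, bound $|\pi_1^*\theta-\pi_2^*\theta|$ there by $h(\delta)\,C(\omega_Y(y)+\omega_Y(z))^q$ with $h(\delta)\to0$, then enlarge the (now positive) integrand back to all of $Y$ to obtain a $dd^c$-closed bound $R_n$ with $\|R_n\|\leq C_1h(\delta)\|R\|$, taking $\delta=\delta_n\to0$ with $n_{\delta}\leq n<n_{\delta/2}$. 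Once you replace your decay claim by this exact-support statement, your proof coincides with the paper's. Two minor further remarks: your first step ($K_n=[\Delta_Y]+dd^cu_n$ with $u_n=\pi_*((\varphi_n-\varphi)\eta)$) is a correct identity but is never used afterwards, and ``dropping the $[\Delta_Y]$-contribution'' against $\pi_2^*T$ for a general DSH current $T$ is not a well-defined operation anyway; and the reduction to $T\geq0$, $\theta$ strongly positive is harmless but unnecessary, since the inequalities $-R\leq T\leq R$ and $\pm(\pi_1^*\theta-\pi_2^*\theta)\leq h(\delta)C(\omega_Y(y)+\omega_Y(z))^q$ can be fed directly into the strongly positive kernel $K_n^++K_n^-$.
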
  

\begin{proof} (Of Lemma \ref{TheoremRegularizationCompactibleWithWedgproduct})

Let us define $H_n=K_n(T\wedge \theta )-K_n(T)\wedge \theta$. Since $T$ and $\theta$ may not be either positive or $dd^c$-closed, a priori $H_n$ is neither. However, we will show that there are positive $dd^c$-closed currents $R_n$ such that $\lim _{n\rightarrow\infty}||R_n||=0$ and $-R_n\leq H_n\leq R_n$. 

By definition we have
\begin{eqnarray*}
H_n(y)=\int _{z\in Y}K_n(y,z)\wedge (\pi _1^*\theta -\pi _2^*\theta )\wedge \pi _2^*T.
\end{eqnarray*}
Fix a number $\delta >0$. Then by the construction of $K_n$, there is an integer $n_{\delta}$ so that if $n\geq n_{\delta}$ and $|y-z|\geq \delta$ then $K_n(y,z)=0$. Thus
\begin{eqnarray*}
H_n(y)=\int _{z\in Y,~|z-y|<\delta}K_n(y,z)\wedge (\pi _1^*\theta -\pi _2^*\theta )\wedge \pi _2^*T.
\end{eqnarray*}
We define $h (\delta )=\max _{y,z\in Y: ~|y-z|\leq\delta}|\pi _1^*\theta -\pi _2^*\theta |$. We now show that
\begin{equation}
\lim _{\delta \rightarrow 0}h(\delta )=0.
\label{Equation.1}\end{equation} 

Let $\iota :\Delta \subset Z\times Z$ be the embedding of the diagonal $\Delta $ into $Z\times Z$. Since the $(q,q)$ form $\pi _1^*\theta -\pi _2^*\theta $ is smooth on $Z\times Z$, and since $Z\times Z$ (and hence $\Delta$) is compact, it suffices to show that the restriction of $\pi _1^*\theta -\pi _2^*\theta $ to $\Delta$ is $0$. But the latter is clear, since
\begin{eqnarray*}
\pi _1^*\theta -\pi _2^*\theta |_{\Delta}=\iota ^*(\pi _1^*\theta -\pi _2^*\theta )=(\pi _1\circ \iota )^*(\theta ) -(\pi _2\circ \iota )^*(\theta ),
\end{eqnarray*}
and the last expression is $0$ because the two maps $\pi _1\circ \iota ,\pi _2\circ \iota  :\Delta \rightarrow Z$ are the same map $(z,z)\mapsto z$. 

By (\ref{Equation.1}), because $Y\times Y$ is compact, there is a constant $C>0$ independent of $\theta$ and $\delta$ so that 
\begin{eqnarray*}
-h(\delta )C(\omega _Y(y)+\omega _Y(z))^q\leq \theta (z)-\theta (y)\leq h(\delta )C(\omega _Y(y)+\omega _Y(z))^q
\end{eqnarray*}
for all $\delta \leq 1$ and for all $|y-z|\leq \delta$. Since $K_n^{\pm}(y,z)$ are strongly positive closed and $-R\leq T\leq R$, it follows that 

\begin{eqnarray*}
H_n(y)&=&\int _{z\in Y,~|z-y|<\delta}K_n(y,z)\wedge (\pi _1^*\theta -\pi _2^*\theta )\wedge \pi _2^*T\\
&\leq&h(\delta)C\int _{z\in Y,~|z-y|<\delta}(K_n^+(y,z)+K_n^-(y,z))\wedge (\omega _Y(y)+\omega _Y(z))^q\wedge R(z)\\
&\leq&h(\delta)C\int _{z\in Y}(K_n^+(y,z)+K_n^-(y,z))\wedge (\omega _Y(y)+\omega _Y(z))^q\wedge R(z).
\end{eqnarray*}
Thus $H_n(y)\leq R_n(y)$ where 
\begin{eqnarray*}
R_n(y)=h(\delta)C\int _{z\in Y}(K_n^+(y,z)+K_n^-(y,z))\wedge (\omega _Y(y)+\omega _Y(z))^q\wedge R(z),
\end{eqnarray*}
for $n_{\delta }\leq n<n_{\delta /2} $. Similarly we have $H_n(y)\geq -R_n(y)$. It can be checked that $R_n(y)$ is positive $dd^c$-closed. Moreover, there is a constant $C_1>0$ independent of $n$, $\delta$, $R$ and $\theta$ so that 
\begin{equation}
||R_n||\leq h(\delta )C_1||R||, \label{EquationEstimateSn}
\end{equation}
for $n\geq n_{\delta}$. This shows that $||R_n||\rightarrow 0$ as $n\rightarrow \infty$.
\end{proof}
\begin{remark}
By the estimate (\ref{EquationEstimateSn}) and by iterating we obtain the following result: Let $T$, $R$ and $\theta$ be as in Lemma \ref{TheoremRegularizationCompactibleWithWedgproduct}. Then there are positive $dd^c$-closed $(p+q,p+q)$ currents $R_{n_1,n_2,\ldots ,n_l}$ so that
\begin{eqnarray*}
-R_{n_1,n_2,\ldots ,n_l}\leq K_{n_1}\circ K_{n_2}\circ \ldots K_{n_l}(T\wedge \theta )-K_{n_1}\circ K_{n_2}\circ \ldots K_{n_l}(T)\wedge \theta \leq R_{n_1,n_2,\ldots ,n_l},
\end{eqnarray*}
and 
\begin{eqnarray*}
\lim _{n_1,n_2,\ldots ,n_l\rightarrow \infty}||R_{n_1,n_2,\ldots ,n_l}||=0.
\end{eqnarray*}

We give the proof of this claim for example when $l=2$. We will write the $R_n$ in Lemma \ref{TheoremRegularizationCompactibleWithWedgproduct} by $R_n(R)$ to emphasize its dependence on $R$. Writing 
\begin{eqnarray*}
&&K_{n_1}\circ K_{n_2}(T\wedge \theta )-K_{n_1}\circ K_{n_2}(T)\wedge \theta\\
&=&[K_{n_1}(K_{n_2}(T\wedge \theta )-K_{n_2}(T)\wedge \theta )]+[K_{n_1}(K_{n_2}(T)\wedge \theta )-K_{n_1}(K_{n_2}(T))\wedge \theta ],
\end{eqnarray*}
and choosing 
\begin{eqnarray*}
R_{n_1,n_2}=K_{n_1}^+(R_{n_2}(R))+K_{n_1}^-(R_{n_2}(R))+R_{n_1}(K_{n_2}^+(R))+R_{n_1}(K_{n_2}^-(R)),
\end{eqnarray*}
we see that 
\begin{eqnarray*}
-R_{n_1,n_2}\leq K_{n_1}\circ K_{n_2}(T\wedge \theta )-K_{n_1}\circ K_{n_2}(T)\wedge \theta \leq R_{n_1,n_2}.
\end{eqnarray*}
That $R_{n_1,n_2}$ are positive $dd^c$-closed follows from the properties of the kernels $K_n$. It remains to bound the masses of $R_{n_1,n_2}$. By (\ref{EquationEstimateSn}) we have
\begin{eqnarray*}
||R_{n_1,n_2}||&\leq& C_1(||R_{n_2}(R)||+||R_{n_1}(K_{n_2}^+(R))||+||R_{n_2}(K_{n_2}^-(R))||)\\
&\leq&C_2h(\delta )(||R||+||K_{n_2}^+(R)||+||K_{n_2}^-(R)||)\\
&\leq &C_3 h(\delta )||R||,
\end{eqnarray*}
for constants $C_1,C_2,C_3$ and for all $n_1,n_2\geq n_{\delta}$, here $n_{\delta}$ is the constant in the proof of Lemma \ref{TheoremRegularizationCompactibleWithWedgproduct}.
\label{Remark1}\end{remark}

\section{Pullback of currents by meromorphic maps and intersection of currents}

\subsection{Pullback of currents}

Let $Y$ be another compact K\"ahler manifold, and let $f:X\rightarrow Y$ be a dominant meromorphic map. Let $\Gamma _f\subset X\times Y$ be the graph of $f$, and let $\pi _X,\pi _Y:X\times Y\rightarrow X,Y$ be the projections. (When $X=Y$ we denote these two maps by $\pi _1$ and $\pi _2$.) We denote by $\mathcal{C}_Y$ the critical set of $\pi _Y$, i.e. the smallest analytic subvariety
of $\Gamma _f$ so that the restriction of $\pi _Y$ to $\Gamma _f-\mathcal{C}_Y$ has fibers of dimension $dim(X)-dim(Y)$. We have a similar notation $\mathcal{C}_X$ for the map $\pi _X$. (When $X=Y$ we denote $\mathcal{C}_X,\mathcal{C}_Y$ by $\mathcal{C}_1$ and $\mathcal{C}_2$.) Hence the set $\pi _X(\mathcal{C}_Y)$ may be regarded as the critical set of the map $f$. For a set $A\subset X$, we define its (total) image by $f(A)=\pi _Y(\pi _X^{-1}(A)\cap \Gamma _f)$, and for a set $B\subset Y$ we define its (total) pre-image by $f^{-1}(B)=\pi _X(\pi _Y^{-1}(B)\cap \Gamma _f)$.

If $T$ is a smooth form on $Y$, then it is standard to define $f^*(T)$ as a current on $X$ by the formula $f^*(T)=(\pi _X)_*(\pi _Y^*(T)\wedge [\Gamma
_f])$. This definition descends to cohomology classes: If $T_1$ and $T_2$ are two closed smooth forms on $Y$ having the same cohomology classes, then
$f^*(T_1)$ and $f^*(T_2)$ have the same cohomology class in $X$. This allows us to define a pullback operator on cohomology classes. These considerations
apply equally to continuous forms. However, it is not known how to define the pullback of an arbitrary current in general. 

Meo \cite{meo} defined the pullback of a positive closed $(1,1)$ current in the following way: If $T$ is a positive closed $(1,1)$ current on $Y$, then locally we can write $T=dd^c\varphi$ where $\varphi$ is a pluri-subharmonic function, and we define $f^*(T)=dd^c(\varphi \circ f)$. There are extensions of this to the case of positive $dd^c$-closed $(1,1)$ currents (see Alessandrini- Bassanelli \cite{alessandrini-bassanelli2} and Dinh-Sibony \cite{dinh-sibony2}).

For a measure $\mu$ having no mass on the indeterminacy set $\mathcal{I}(f)$, we can define its pushforward by $f$ as follows (see e.g. \cite{deThelin-deVigny}): $(f_*)(\mu )(B)=\mu (f^{-1}(B)\cap X\backslash \mathcal{I}(f))$. 

For a holomorphic map, whose fibers are either empty or of dimension $dim(X)-dim(Y)$, Dinh-Sibony \cite{dinh-sibony2} defined pullback of positive closed currents of any bidegrees. For meromorphic selfmaps of $\mathbb{P}^k$, they gave a satisfying pullback operator using super-potentials (see \cite{dinh-sibony4}). For general meromorphic maps on compact K\"ahler manifolds, they defined a "strict pullback" on positive closed currents of any bidegrees. However, this "strict pullback" is not compatible with the pullback on cohomology. 

Using the good approximation schemes (see Definition \ref{DefinitionGoodApproximation}), we defined in \cite{truong1} a pullback operator which is compatible with the pullback on cohomology, and is compatible with the previous definitions. Moreover if a positive closed current $T$ can be pulled back by the map $f$, then $f^*(T)$ is an extension of the "strict pullback" of Dinh and Sibony. 

We now recall the definition from \cite{truong1}, where it had not been checked that the kernels $K_n$ satisfy Condition 9) in Definition \ref{DefinitionGoodApproximation}.  

\begin{definition}
 Let $T$ be a $DSH^p(Y)$ current on $Y$. We say that $f^{*}(T)$ is well-defined if there is a number $s\geq 0$ and a current $S$ on $X$ so that
\begin{eqnarray*}
\lim _{n\rightarrow\infty}f^*(\mathcal{K}_n(T))= S,
\end{eqnarray*}
for any good approximation scheme by $C^{s+2}$ forms $\mathcal{K}_n^{\pm}$. Then we write $f^{*}(T)=S$.
 \label{DefinitionPullbackCurrentsByMeromorphicMaps}\end{definition} 
The definition for a general current on $Y$ (not necessarily DSH) is more complicated. We recall it here and will use it for currents of the form $T\wedge \theta$, where $T$ is a DSH current and $\theta$ is a smooth $(q,q)$ form. Recall that since $Y$ is a compact manifold, any current on $Y$  is of finite order.
\begin{definition}
Let $T$ be a $(p,p)$ current of order $s_0$. We say that $f^{*}(T)$ is well-defined if there is a number $s\geq s_0$ and a current $S$ on $X$ so
that
\begin{eqnarray*}
\lim _{n\rightarrow}\int _{Y}T\wedge \mathcal{K}_n(f_*(\alpha )= \int _{X}S\wedge \alpha ,
\end{eqnarray*}
for any smooth form $\alpha$ on $X$ and any good approximation scheme by $C^{s+2}$ forms $\mathcal{K}_n$. Then we write
$f^{*}(T)=S$. \label{DefinitionPullbackDdcOfOrderSCurrents}\end{definition}

By the self-adjointness in Definition \ref{DefinitionGoodApproximation}, we see that Definitions \ref{DefinitionPullbackCurrentsByMeromorphicMaps} and \ref{DefinitionPullbackDdcOfOrderSCurrents} coincide for DSH currents. 

We recall some results from \cite{truong1} for using later (see Theorems 6 and 9 in \cite{truong1}):

\begin{theorem}
Let $X$ and $Y$ be two compact K\"ahler manifolds. Let $f:X\rightarrow Y$ be a dominant meromorphic map. Assume that $\pi _X(\mathcal{C}_Y)$ is of
codimension $\geq p$. Then the pullbacks $f^{*}:DSH^{p-1}(Y)\rightarrow DSH^{p-1}(X)$ and $f^*:\mathcal{D}^p(Y)\rightarrow \mathcal{D}^p(X)$ are well-defined. Moreover these pullbacks are continuous with respect to the topologies on the corresponding spaces. 
\label{TheoremInterestingExample1}\end{theorem}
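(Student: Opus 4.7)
My plan is to verify that Definitions~\ref{DefinitionPullbackCurrentsByMeromorphicMaps} and~\ref{DefinitionPullbackDdcOfOrderSCurrents} succeed under the codimension hypothesis: for any good approximation scheme $\mathcal{K}_n^{\pm}$ of sufficiently high regularity, the sequence $f^*(\mathcal{K}_n(T))$ converges, and its limit is independent of the chosen scheme. I would first pass to a desingularization $\sigma : Z \rightarrow \Gamma_f$ with $\sigma_X = \pi_X \circ \sigma$ and $\sigma_Y = \pi_Y \circ \sigma$ both holomorphic, so that on smooth forms $f^*(\alpha) = (\sigma_X)_* \sigma_Y^*(\alpha)$. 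Since the $\mathcal{K}_n^{\pm}(T)$ have uniformly bounded mass (resp.\ $DSH$ norm) by property~1, their cohomology classes in $Y$ are bounded, so the cohomology classes, and hence the masses, of $(\sigma_X)_* \sigma_Y^*(\mathcal{K}_n^{\pm}(T))$ are uniformly bounded. This yields uniform bounds in $\mathcal{D}^p(X)$ (resp.\ $DSH^{p-1}(X)$) and the existence of weak subsequential limits.

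The crux is to pin down these limits. Any two subsequential limits differ by a closed normal current of bidegree $(p-1,p-1)$ or $(p,p)$ supported on $\pi_X(\mathcal{C}_Y)$, the locus where pulling back can fail. By Federer's support theorem, no nonzero closed normal current of bidegree $(q,q)$ can be supported on an analytic subset of codimension strictly greater than $q$; since $\pi_X(\mathcal{C}_Y)$ has codimension $\geq p$, this immediately kills any spurious difference in the $DSH^{p-1}(X)$ case (bidegree $(p-1,p-1)$ vs.\ codimension $\geq p$). For the $\mathcal{D}^p(X)$ case I would instead exploit property~6 (self-adjointness) to rewrite
\begin{eqnarray*}
\int_X f^*(\mathcal{K}_n(T)) \wedge \alpha = \int_Y T \wedge \mathcal{K}_n(f_*(\alpha))
\end{eqnarray*}
for smooth test forms $\alpha$ on $X$, and then show that $\mathcal{K}_n(f_*(\alpha))$ converges to $f_*(\alpha)$ in a topology strong enough to test against positive closed $(p,p)$ currents. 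This step uses that the singularities of $f_*(\alpha)$ are confined to $\pi_Y(\mathcal{C}_X)$ and are of $L^1_{loc}$ type, together with property~4 (continuity on regular open sets).

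The main obstacle is the $\mathcal{D}^p(X)$ case, where bidegree matches codimension: mass can legitimately live on $\pi_X(\mathcal{C}_Y)$, and one must verify that any such limiting mass is intrinsic to $T$ rather than an artifact of the smoothing. This is where properties~4, 6, and~8 of the good approximation schemes combine with the codimension hypothesis. Once independence of scheme is established, continuity of $f^*$ follows by a diagonal argument: given $T_k \rightarrow T$ in the appropriate topology, one chooses $n(k) \rightarrow \infty$ fast enough so that $\mathcal{K}_{n(k)}(T_k) \rightarrow T$ as currents with uniformly bounded norms; the continuity of the smooth pullback $(\sigma_X)_* \sigma_Y^*$ and the uniform cohomological bounds then force $f^*(\mathcal{K}_{n(k)}(T_k)) \rightarrow f^*(T)$.
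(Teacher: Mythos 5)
You should first be aware that this paper does not actually prove Theorem \ref{TheoremInterestingExample1}: it is imported verbatim from Theorem 6 of \cite{truong1} (``We recall some results from \cite{truong1} for using later''), with only the remark that the proof there also covers the $DSH^{p-1}$ case. So the comparison is necessarily against the cited proof, whose machinery the present paper does reproduce in part (Lemma \ref{LemmaQuasiPotential} and the Federer-type support theorem for $\mathbb{C}$-normal currents of Bassanelli, invoked explicitly in the proof of Lemma \ref{LemmaIntersectionOfVarieties} as ``an argument as in the proof of Theorem 6 in \cite{truong1}'').

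Your skeleton is the right one (uniform mass/$DSH$ bounds from property 1, subsequential limits, uniqueness by localizing the ambiguity on $\pi_X(\mathcal{C}_Y)$ and applying a support theorem), and you correctly isolate the critical case. But the proposal has a genuine gap exactly where you admit ``the main obstacle'' lies: for $\mathcal{D}^p(Y)$ the difference of two subsequential limits is a closed $(p,p)$ current supported on a set of codimension $\geq p$, and the classical Federer argument only kills codimension $>p$; in the borderline case the difference can a priori be a nontrivial combination of integration currents on the codimension-$p$ components. Saying that ``properties 4, 6, and 8 combine with the codimension hypothesis'' is not an argument. The actual mechanism in \cite{truong1} is the quasi-potential inequality $T\leq \alpha+dd^cS$ with $S$ \emph{negative} $DSH$ and $\|\alpha\|_{L^\infty},\|S\|\leq C\|T\|$ (Lemma \ref{LemmaQuasiPotential} here): after the self-adjointness reduction to $\lim_n\int_Y T\wedge\mathcal{K}_n(f_*(\alpha))$, one integrates by parts and uses the negativity and monotonicity of the regularized potentials to force convergence of the pairing against the $L^1$ singularities of $f_*(\alpha)$, rather than proving that $\mathcal{K}_n(f_*(\alpha))\to f_*(\alpha)$ ``in a topology strong enough to test against positive closed $(p,p)$ currents'' --- the latter statement is essentially equivalent to what you are trying to prove and is false for an arbitrary regularization of an $L^1$ form. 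Two smaller inaccuracies: in the $DSH^{p-1}$ case the difference of limits is not closed, so you need the support theorem for $\mathbb{C}$-normal (not normal closed) currents; and the singularities of $f_*(\alpha)$ sit over the critical locus of $\pi_Y|_{\Gamma_f}$, not over $\pi_Y(\mathcal{C}_X)$. Even the preliminary claim that the ambiguity is supported on $\pi_X(\mathcal{C}_Y)$ requires an argument (local convergence over the equidimensional locus, via \cite{dinh-sibony2} or the same potential estimates), which you assert but do not supply.
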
 
In fact, even though the statement of Theorem 6 in \cite{truong1} concerns only the claim for the map $f^*:\mathcal{D}^p(Y)\rightarrow \mathcal{D}^p(X)$ in Theorem \ref{TheoremInterestingExample1}, its proof confirms the claim for the map $f^*:DSH^{p-1}(Y)\rightarrow DSH^{p-1}(X)$ in Theorem \ref{TheoremInterestingExample1}.

\begin{theorem} Let $X$ and $Y$ be two compact K\"ahler manifolds. Let $f:X\rightarrow Y$ be a dominant meromorphic map. Let $T$ be a positive measure having no mass on $\pi _Y(\mathcal{C}_Y)$. Then $f^{*}(T)$ is well-defined, and coincides with the usual definition. Moreover, if $T$ has no mass on proper analytic
subvarieties of $Y$, then $f^{*}(T)$ has no mass on proper analytic subvarieties
of $X$.
\label{TheoremPullbackOfMeasures}\end{theorem}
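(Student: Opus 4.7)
The approach is to construct the candidate $S$ through the classical branched-covering pullback and then verify it matches Definition \ref{DefinitionPullbackDdcOfOrderSCurrents}. The hypothesis that $T$ has no mass on $\pi_Y(\mathcal{C}_Y)$ makes the classical construction go through: the restriction $\pi_Y:\Gamma_f\setminus \pi_Y^{-1}(\pi_Y(\mathcal{C}_Y))\rightarrow Y\setminus \pi_Y(\mathcal{C}_Y)$ is a proper finite holomorphic map, so via local inverse branches (with standard multiplicities at ramification points) we obtain a well-defined positive measure $\pi_Y^{*}T$ on $\Gamma_f$. Setting $S:=(\pi_X)_*(\pi_Y^{*}T)$ gives a positive measure on $X$, which is the usual definition of $f^{*}T$. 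Once $S$ is shown to satisfy Definition \ref{DefinitionPullbackDdcOfOrderSCurrents}, both the well-definedness of $f^{*}T$ and its agreement with the usual definition follow at once.

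To verify Definition \ref{DefinitionPullbackDdcOfOrderSCurrents}, fix a good approximation scheme $\mathcal{K}_n$ and a smooth test function $\alpha$ on $X$. The pushforward $f_*\alpha=(\pi_Y)_*\pi_X^{*}\alpha$ is bounded by $d\,\|\alpha\|_{L^{\infty}}$, where $d$ is the topological degree of $f$, and is continuous on the open set $U:=Y\setminus\pi_Y(\mathcal{C}_Y)$. Property 4 of the approximation scheme then gives $\mathcal{K}_n(f_*\alpha)\rightarrow f_*\alpha$ locally uniformly on $U$, while the convolutional structure of the kernels $K_n$ from Section 3, combined with properties 1 and 2, yields a uniform global bound $\|\mathcal{K}_n(f_*\alpha)\|_{L^{\infty}}\leq C\|f_*\alpha\|_{L^{\infty}}$. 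Since $T$ has no mass on $Y\setminus U=\pi_Y(\mathcal{C}_Y)$, dominated convergence gives
\begin{eqnarray*}
\lim_{n\rightarrow\infty}\int_Y T\wedge \mathcal{K}_n(f_*\alpha)=\int_Y T\wedge f_*\alpha,
\end{eqnarray*}
and the projection formula identifies the right-hand side with $\int_{\Gamma_f}\pi_Y^{*}T\wedge\pi_X^{*}\alpha=\int_X S\wedge\alpha$, which is the required identity.

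For the moreover part, let $A\subset X$ be a proper analytic subvariety. Since $\pi_X$ is a surjective bimeromorphic map and $\Gamma_f$ is irreducible of dimension $\dim Y$, the analytic set $\pi_X^{-1}(A)\cap\Gamma_f$ is proper in $\Gamma_f$, hence has dimension strictly less than $\dim Y$. Its image $f(A)=\pi_Y(\pi_X^{-1}(A)\cap\Gamma_f)$, being the image of an analytic set of smaller dimension under the proper generically finite map $\pi_Y$, is a proper analytic subvariety of $Y$. The hypothesis then yields $T(f(A))=0$, and the branched-covering structure of $\pi_Y$ over $U$ gives $S(A)\leq \pi_Y^{*}T(\pi_X^{-1}(A))\leq d\cdot T(f(A))=0$.

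The main technical point is the global uniform $L^{\infty}$ bound on $\mathcal{K}_n(f_*\alpha)$. Because $f_*\alpha$ is in general discontinuous across $\pi_Y(\mathcal{C}_Y)$, one must check that the smoothing operators do not amplify its sup-norm there; this follows from the explicit description $K_n=K_n^{+}-K_n^{-}$ in Section 3 as a difference of convolution kernels built from positive closed currents of uniformly bounded mass, so that convolution against $K_n^{\pm}$ acts boundedly on $L^{\infty}$, and the iteration $\mathcal{K}_n=K_{(m_1)_n}\circ\cdots\circ K_{(m_l)_n}$ preserves this boundedness.
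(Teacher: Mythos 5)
A preliminary remark: the paper itself contains no proof of this statement. It is recalled verbatim from the author's earlier work (it is Theorem 9 in \cite{truong1}), so there is no in-paper argument to compare yours against. Judged on its own terms, your strategy is the natural one and its main steps are sound in the equidimensional case: you build the candidate $S=(\pi_X)_*(\pi_Y^*T)$ by the classical branched-covering pullback over $Y\setminus\pi_Y(\mathcal{C}_Y)$, and you verify Definition \ref{DefinitionPullbackDdcOfOrderSCurrents} by combining local uniform convergence of $\mathcal{K}_n(f_*\alpha)$ on that open set (property 4) with a uniform $L^{\infty}$ bound and the hypothesis that $T$ charges no mass on $\pi_Y(\mathcal{C}_Y)$; the statement about analytic subvarieties then follows from the fiber count. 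Two points deserve attention.

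First, your proof silently assumes $\dim X=\dim Y$: the topological degree $d$, the finiteness of the fibers of $\pi_Y|_{\Gamma_f}$ over $Y\setminus\pi_Y(\mathcal{C}_Y)$, the local inverse branches, and the choice of test objects $\alpha$ as functions all presuppose equidimensionality, whereas the theorem is stated for an arbitrary dominant meromorphic map between compact K\"ahler manifolds. When $\dim X>\dim Y$ the relevant test forms $\alpha$ have bidegree $(\dim X-\dim Y,\dim X-\dim Y)$, $f_*\alpha$ is a fiber integral rather than a fiber sum, and the bound $\|f_*\alpha\|_{L^{\infty}}\leq d\|\alpha\|_{L^{\infty}}$ must be replaced by a uniform bound on fiber volumes; the skeleton of your argument survives, but these adjustments should be made explicit if the full statement is wanted. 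Second, Definition \ref{DefinitionPullbackDdcOfOrderSCurrents} quantifies over \emph{all} good approximation schemes, so the key estimate $\|\mathcal{K}_n(f_*\alpha)\|_{L^{\infty}}\leq C\|f_*\alpha\|_{L^{\infty}}$ should be deduced from the axioms of Definition \ref{DefinitionGoodApproximation} rather than from the particular convolution structure of the kernels of Section 3. This is in fact possible and cleaner: by properties 2) and 3), $\mathcal{K}_n^{\pm}(1)$ is a positive closed $(0,0)$ current, hence a constant, and it is uniformly bounded by the mass bound; by properties 2) and 5) the operators $\mathcal{K}_n^{\pm}$ are monotone on bounded DSH functions, so writing $u=(u+\|u\|_{L^{\infty}})-\|u\|_{L^{\infty}}$ gives $|\mathcal{K}_n^{\pm}(u)|\leq 3\|u\|_{L^{\infty}}\mathcal{K}_n^{\pm}(1)$. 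With these two repairs your argument is complete.
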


We can define a similar notion $f_*$ of pushforward of currents, and obtain similar results to that of Theorems \ref{TheoremInterestingExample1} and \ref{TheoremPullbackOfMeasures} for the pushforward operator. Note that when $f$ is a bimeromorphic map then $f_*=(f^{-1})^*$.

We now prove several additional properties. A current $\tau$ is called pseudo-$dd^c$-plurisubharmonic if there is a smooth form $\gamma $ so that $dd^c\tau\geq -\gamma $. We have the following result
\begin{theorem}
Let $X$ and $Y$ be compact K\"ahler manifolds and let $f:X\rightarrow Y$ be a dominant meromorphic map. Let $T$ be a $DSH$ $(p,p)$ current and let $\theta$ be a smooth $(q,q)$ form on $Y$. Assume that there is a positive pseudo-$dd^c$-plurisubharmonic current $\tau$ so that $-\tau\leq T\leq \tau$.

a) If $f$ is holomorphic and $f^{*}(T)$ is well-defined, then $f^{*}(T\wedge \theta )$ is well-defined. Moreover, $f^{*}(T\wedge \theta )=f^{*}(T)\wedge f^*(\theta )$. 

b) More general, assume that there is a number $s\geq 0$ and a $(p,p)$ current $(\pi _Y|\Gamma _f)^{*}(T)$ on $X\times Y$  such that for any good approximation by $C^{s+2}$ forms $\mathcal{K}_n$ then 
\begin{eqnarray*} 
\lim _{n\rightarrow\infty}\pi _Y^*(\mathcal{K}_n(T))\wedge [\Gamma _f]=(\pi _Y|\Gamma _f)^{*}(T).
\end{eqnarray*}
Then $f^{*}(T\wedge \theta )$ is well-defined, and moreover $f^{*}(T\wedge \theta )=(\pi _X)_*((\pi _Y|\Gamma _f)^{*}(T)\wedge \pi _Y^*(\theta ))$.
\label{TheoremPullbackCompatibleWithWedgeProduct}\end{theorem}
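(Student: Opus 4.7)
The plan is to combine the approximation scheme $\mathcal{K}_n$ from Section 3 with the near-commutativity of $\mathcal{K}_n$ with wedge product stated in Lemma \ref{TheoremRegularizationCompactibleWithWedgproduct} (and, if needed, its iteration in Remark \ref{Remark1}). Applying the lemma to $T$ and $\theta$ (the pseudo-$dd^c$-plurisubharmonic bound differs from a $dd^c$-closed bound by a smooth term that is absorbed in the constants), one obtains positive currents $R_n$ of small mass, $\|R_n\|\to 0$, such that
\begin{equation*}
-R_n \leq \mathcal{K}_n(T\wedge\theta) - \mathcal{K}_n(T)\wedge\theta \leq R_n.
\end{equation*}
The common strategy for both parts is then to split $\mathcal{K}_n(T\wedge\theta)=\mathcal{K}_n(T)\wedge\theta+\bigl(\mathcal{K}_n(T\wedge\theta)-\mathcal{K}_n(T)\wedge\theta\bigr)$, apply the pullback to each piece, and pass to the limit.

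For part (a), holomorphicity of $f$ gives the literal identity $f^{*}(\mathcal{K}_n(T)\wedge\theta)=f^{*}(\mathcal{K}_n(T))\wedge f^{*}(\theta)$. The right-hand side converges weakly to $f^{*}(T)\wedge f^{*}(\theta)$ because $f^{*}(\mathcal{K}_n(T))\to f^{*}(T)$ by hypothesis and $f^{*}(\theta)$ is smooth. The error piece is sandwiched between $\pm R_n$; since $f$ is holomorphic, the pullback of a positive $dd^c$-closed current is well-defined with $\|f^{*}(R_n)\|=O(\|R_n\|)\to 0$. Combining, $f^{*}(\mathcal{K}_n(T\wedge\theta))\to f^{*}(T)\wedge f^{*}(\theta)$ for every admissible approximation scheme, which yields both the well-definedness of $f^{*}(T\wedge\theta)$ and the stated formula.

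For part (b), we work on the graph: $f^{*}(\mathcal{K}_n(T\wedge\theta))=(\pi_X)_{*}\bigl(\pi_Y^{*}\mathcal{K}_n(T\wedge\theta)\wedge[\Gamma_f]\bigr)$. The principal contribution is
\begin{equation*}
(\pi_X)_{*}\bigl((\pi_Y^{*}\mathcal{K}_n(T)\wedge[\Gamma_f])\wedge\pi_Y^{*}\theta\bigr),
\end{equation*}
which converges to $(\pi_X)_{*}\bigl((\pi_Y|\Gamma_f)^{*}(T)\wedge\pi_Y^{*}\theta\bigr)$ by the standing hypothesis on $(\pi_Y|\Gamma_f)^{*}(T)$ and the continuity of wedging with the smooth form $\pi_Y^{*}\theta$. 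The error contribution is controlled by $\pm(\pi_X)_{*}(\pi_Y^{*}R_n\wedge[\Gamma_f])$; since $\pi_Y$ is a submersion, $\pi_Y^{*}R_n$ is a positive current on $X\times Y$ of mass $O(\|R_n\|)$, and its intersection with $[\Gamma_f]$ has $\pi_X$-pushforward of mass tending to $0$. The main obstacle is justifying this last step rigorously: the wedge $\pi_Y^{*}R_n\wedge[\Gamma_f]$ is not literal because $R_n$ is only $L^1$ near the diagonal. To handle it one exploits the explicit Dinh--Sibony construction of $R_n$ as a $\pi_{*}$-pushforward from a diagonal regularization on the blowup $\widetilde{Y\times Y}$, and if necessary iterates $\mathcal{K}_n$ as in Remark \ref{Remark1} to gain enough regularity for the slicing against $[\Gamma_f]$ to be legitimate and the mass estimate uniform.
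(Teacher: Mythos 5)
Your overall architecture (regularize, split off a main term, sandwich the error between $\pm R_n$ with $\|R_n\|\to 0$ via the wedge-commutativity of the kernels) matches the spirit of the paper's proof, but there are two concrete problems. First, you invoke Lemma \ref{TheoremRegularizationCompactibleWithWedgproduct} for the pair $(T,\theta)$ using the bound $\tau$, but that lemma requires a positive \emph{$dd^c$-closed} dominating current, and a positive pseudo-$dd^c$-plurisubharmonic current is not a $dd^c$-closed one plus a smooth form (the lower bound is on $dd^c\tau$, not on $\tau$), so "absorbed in the constants" does not go through as stated. The paper sidesteps this entirely: it works in the dual formulation of Definition \ref{DefinitionPullbackDdcOfOrderSCurrents}, applying condition 9) to the pair $\bigl(f_*(\beta),\theta\bigr)$ for a smooth test form $\beta$ on $X$, where $f_*(\beta)$ is dominated by a multiple of the \emph{strongly positive closed} current $f_*(\omega_X^{\dim X-p-q})$, so the lemma applies verbatim.

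Second, and more seriously, the step where you dispose of the error term is exactly the crux, and your justification is not valid. You assert $\|f^*(R_n)\|=O(\|R_n\|)$ in (a) and that $(\pi_X)_*(\pi_Y^*R_n\wedge[\Gamma_f])$ has small mass in (b). But $R_n$ is positive and (at best) $dd^c$-closed, not closed, so its mass is not a cohomological quantity: pairing $R_n$ against a non-smooth positive closed current such as $f_*(\omega_X^{k-p-q})$, or slicing it on $\Gamma_f$, is not controlled by $\|R_n\|$ alone, since $R_n$ could concentrate where the other current is singular. This is precisely where the hypothesis on $\tau$ enters in the paper and where your proposal never uses it: the paper bounds the error pairing by $\int_Y(\mathcal{K}_m^+(\tau)+\mathcal{K}_m^-(\tau))\wedge R_n$, then invokes Lemma \ref{LemmaQuasiPotential} to write $R_n\leq\alpha_n+dd^cS_n$ with $\|\alpha_n\|_{L^\infty},\|S_n\|\to 0$, and integrates by parts, using $dd^c\tau\geq-\gamma$ with $\gamma$ smooth to control $\int\mathcal{K}_m^\pm(dd^c\tau)\wedge S_n$. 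Without this (or an equivalent) mechanism, the smallness of $\|R_n\|$ cannot be converted into smallness of the relevant pairings, and your acknowledged "main obstacle" in (b) --- as well as the unacknowledged analogous obstacle in (a) --- remains open. A proof of this theorem that never uses the pseudo-$dd^c$-plurisubharmonicity of $\tau$ should be treated as incomplete.
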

Roughly speaking, the result b) of Theorem \ref{TheoremPullbackCompatibleWithWedgeProduct} says that under some natural conditions if we can pullback $T$ then we can do it locally. 

\begin{proof} (Of Theorem \ref{TheoremPullbackCompatibleWithWedgeProduct})

a) We let $s\geq 0$ be a number so that for any good approximation scheme by $C^{s+2}$ forms $\mathcal{K}_n$ and for any smooth form $\alpha$ on $X$ then 
\begin{eqnarray*}
\int _Xf^{*}(T)\wedge \alpha =\lim _{n\rightarrow\infty}\int _YT\wedge \mathcal{K}_n(f_*(\alpha )).
\end{eqnarray*}

Then for the proof of a) it suffices to show that for any smooth form $\beta$ on $X$ then

\begin{eqnarray*}
\lim _{n\rightarrow\infty}\int _YT\wedge \theta \wedge \mathcal{K}_n(f_*(\beta ))=\int _Xf^{*}(T)\wedge f^*(\theta )\wedge \beta .
\end{eqnarray*}
If we can show 
\begin{equation}
\lim _{n\rightarrow\infty}\int _YT\wedge (\theta \wedge \mathcal{K}_n(f_*(\beta ))-\mathcal{K}_n(\theta \wedge f_*(\beta )))=0
\label{Equation4}
\end{equation}
then we are done, since we have $\theta \wedge f_*(\beta ))=f_*(f^*(\theta )\wedge \beta  )$ because $f$ is holomorphic, and hence
\begin{eqnarray*}
\lim _{n\rightarrow\infty}\int _YT\wedge \mathcal{K}_n(\theta \wedge f_*(\beta ))=\lim _{n\rightarrow\infty}\int _YT\wedge \mathcal{K}_n(f_*(f^*(\theta )\wedge \beta  ))=\int _Yf^{\sharp}(T)\wedge (f^*(\theta )\wedge \beta ).
\end{eqnarray*}
Now we proceed to proving (\ref{Equation4}). For a fixed $n$ we have 
\begin{eqnarray*}
&&\int _YT\wedge (\theta \wedge \mathcal{K}_n(f_*(\beta ))-\mathcal{K}_n(\theta \wedge f_*(\beta )))\\
&=&\lim _{m\rightarrow\infty}\int _Y\mathcal{K}_m(T)\wedge (\theta \wedge \mathcal{K}_n(f_*(\beta ))-\mathcal{K}_n(\theta \wedge f_*(\beta ))).
\end{eqnarray*} 
The advantage of this is that $\mathcal{K}_m(T)$ are continuous forms, hence if we have  bounds of $\theta \wedge \mathcal{K}_n(f_*(\beta ))-\mathcal{K}_n(\theta \wedge f_*(\beta ))$ by currents of order zero we can use them in the integral and then take limit when $m\rightarrow \infty$.

Because $f_*(\beta )$ is bound by a multiple of $f_*(\omega _X^{dim (X)-p-q})$ and the latter is strongly positive closed, by condition 9) of Definition \ref{DefinitionGoodApproximation} there are strongly positive closed currents $R_n$ with $||R_n||\rightarrow 0$ and 
$$-R_n\leq \theta \wedge \mathcal{K}_n(f_*(\beta ))-\mathcal{K}_n(\theta \wedge f_*(\beta ))\leq R_n,$$ for all $n$. Since $-\tau \leq T\leq \tau $, we have $-(\mathcal{K}_m^{+}(\tau )+\mathcal{K}_m^{-}(\tau ))\leq \mathcal{K}_m(T)\leq \mathcal{K}_m^{+}(\tau )+\mathcal{K}_m^{-}(\tau )$. Since $\mathcal{K}_m^{+}(\tau )+\mathcal{K}_m^{-}(\tau )$ are positive $C^2$ forms, from the above estimates we obtain
\begin{eqnarray*}
-\int _Y(\mathcal{K}_m^+(\tau )+\mathcal{K}_m^-(\tau ))\wedge R_n&\leq& \int _Y\mathcal{K}_m(T)\wedge (\theta \wedge \mathcal{K}_n(f_*(\beta ))-\mathcal{K}_n(\theta \wedge f_*(\beta )))\\
&\leq& \int _Y(\mathcal{K}_m^+(\tau )+\mathcal{K}_m^-(\tau ))\wedge R_n.
\end{eqnarray*}
Hence (\ref{Equation4}) follows if we can show that 
\begin{eqnarray*} 
\lim _{n\rightarrow\infty}\lim _{m\rightarrow\infty}\int _Y(\mathcal{K}_m^+(\tau )+\mathcal{K}_m^-(\tau ))\wedge R_n=0.
\end{eqnarray*}
By Lemma \ref{LemmaQuasiPotential}, there are a smooth closed form $\alpha _n$ and a strongly negative current $S_n$ for which $R_n\leq \alpha _n+dd^cS_n$ and $||\alpha _n||_{L^{\infty}}, ||S_n||\rightarrow 0$. Therefore 
\begin{eqnarray*}  
0&\leq& \int _Y(\mathcal{K}_m^+(\tau )+\mathcal{K}_m^-(\tau ))\wedge R_n\\
&\leq& \int _Y(\mathcal{K}_m^+(\tau )+\mathcal{K}_m^-(\tau ))\wedge \alpha _n+\int _Y(\mathcal{K}_m^+(\tau )+\mathcal{K}_m^-(\tau ))\wedge dd^cS_n . 
\end{eqnarray*}
Since the currents $\mathcal{K}_m^{\pm}(\tau )$ are positive whose masses are uniformly bounded, it follows from $||\alpha _n||_{L^{\infty}}\rightarrow 0$ that 
\begin{eqnarray*}
\lim _{n\rightarrow\infty}\lim _{m\rightarrow\infty}\int _Y(\mathcal{K}_m^+(\tau )+\mathcal{K}_m^-(\tau ))\wedge \alpha _n=0.
\end{eqnarray*}
Now we estimate the other term. We have 
\begin{eqnarray*}
\int _Y(\mathcal{K}_m^+(\tau )+\mathcal{K}_m^-(\tau ))\wedge dd^cS_n=\int _Y(\mathcal{K}_m^+(dd^c\tau )+\mathcal{K}_m^-(dd^c\tau ))\wedge S_n.
\end{eqnarray*}
Because $S_n$ is strongly negative and $dd^c\tau \geq -\gamma $, the last integral can be bound from above by
\begin{eqnarray*}
\int _Y(\mathcal{K}_m^+(dd^c\tau )+\mathcal{K}_m^-(dd^c\tau ))\wedge S_n\leq \int _Y(\mathcal{K}_m^+(-\gamma  )+\mathcal{K}_m^-(-\gamma ))\wedge S_n.
\end{eqnarray*}
Since $\gamma $ is smooth, by condition 4) of Definition \ref{DefinitionGoodApproximation} and the fact that $||S_n||\rightarrow 0$, we obtain
\begin{eqnarray*}
\lim _{n\rightarrow\infty}\lim _{m\rightarrow\infty}\int _Y(\mathcal{K}_m^+(-\gamma  )+\mathcal{K}_m^-(-\gamma ))\wedge S_n=0.
\end{eqnarray*}
Thus, whatever the limit of 
\begin{eqnarray*}
\int _Y(\mathcal{K}_m^+(\tau )+\mathcal{K}_m^-(\tau ))\wedge dd^cS_n
\end{eqnarray*} 
is, it is non-positive. The proof of (\ref{Equation4}) and hence of a) is finished.

b) The proof of b) is similar to that of a).
\end{proof}

As some consequences we obtain the following two results, which were known previously using other definitions of pullbacks (see Diller \cite{diller}, Russakovskii-Shiffman \cite{russakovskii-shiffman} and Dinh-Sibony \cite{dinh-sibony2}). 

\begin{proposition}
Let $X$ and $Y$ be compact K\"ahler manifolds and let $f:X\rightarrow Y$ be a dominant meromorphic map. Let $\psi$ be a function on $Y$ bounded by a quasi-PSH function $\varphi$. Then $f^{*}(\varphi )$ is well-defined with respect to Definition \ref{DefinitionPullbackDdcOfOrderSCurrents}. 
\label{PropositionPullbackOfFormsWithBoundedCoefficients}
\end{proposition}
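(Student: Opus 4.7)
The plan is to reduce the pullback of $\psi$ to that of its dominating quasi-PSH majorant $\varphi$, which is a $DSH^0(Y)$ current already handled by Theorem \ref{TheoremInterestingExample1}, and then to promote the convergence from $\varphi$ to $\psi$ via a uniform integrability argument.

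First, I will observe that a quasi-PSH function is automatically a $DSH^0(Y)$ current: if $dd^c\varphi + \theta \geq 0$ for a smooth closed $(1,1)$-form $\theta$, writing $\theta = \theta^+ - \theta^-$ as a difference of strictly positive closed smooth forms exhibits $dd^c\varphi$ as a difference of positive closed $(1,1)$-currents, hence $\varphi \in DSH^0(Y)$. After subtracting a constant we may arrange $\varphi \leq 0$, so that $|\psi| \leq -\varphi + C_0$ for some $C_0 > 0$. Applying Theorem \ref{TheoremInterestingExample1} with $p = 1$ (the condition $\mathrm{codim}\,\pi_X(\mathcal{C}_Y) \geq 1$ is automatic for a dominant meromorphic map) yields that $f^*(\varphi)$ is well-defined in $DSH^0(X)$; the self-adjointness in Condition 6) of Definition \ref{DefinitionGoodApproximation} ensures this coincides with the Definition \ref{DefinitionPullbackDdcOfOrderSCurrents} version. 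In particular, for every smooth form $\alpha$ on $X$ and every good approximation scheme $\mathcal{K}_n$ by $C^{s+2}$ forms,
\begin{eqnarray*}
\int_Y \varphi \cdot \mathcal{K}_n(f_*\alpha) \;\longrightarrow\; \langle f^*\varphi,\alpha\rangle,
\end{eqnarray*}
and each of $\int_Y (-\varphi)\cdot \mathcal{K}_n^{\pm}(f_*\alpha)$ is uniformly bounded in $n$ by Condition 1) (boundedness).

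Next, I will approximate $\psi$ by continuous functions $\psi_j$ (via a finite atlas together with local convolution) satisfying $\psi_j \to \psi$ pointwise a.e.\ and $|\psi_j| \leq -\varphi + C_0 + 1$. For each continuous $\psi_j$, the integral $\int_Y \psi_j \cdot \mathcal{K}_n(f_*\alpha)$ converges to $\int_Y \psi_j \cdot f_*\alpha$ by weak convergence of the smooth measures $\mathcal{K}_n(f_*\alpha)$ to $f_*\alpha$, and this limit is independent of the scheme. To promote the convergence to $\psi$ itself, I will establish
\begin{eqnarray*}
\limsup_{n\to\infty}\left|\int_Y (\psi - \psi_j)\cdot \mathcal{K}_n(f_*\alpha)\right| \;\longrightarrow\; 0 \quad (j\to\infty),
\end{eqnarray*}
using the domination $|\psi - \psi_j| \leq 2(-\varphi + C_0 + 1)$ together with a Chebyshev estimate of the form $\mathcal{K}_n^{\pm}(f_*\alpha)\{-\varphi > M\} \leq M^{-1}\int (-\varphi)\, \mathcal{K}_n^{\pm}(f_*\alpha)$ and the uniform-in-$n$ bound coming from the well-definedness of $f^*(\varphi)$. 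This shows $\int_Y \psi \cdot \mathcal{K}_n(f_*\alpha)$ is Cauchy with limit independent of $\mathcal{K}_n$, so Definition \ref{DefinitionPullbackDdcOfOrderSCurrents} is satisfied and produces the current $f^*(\psi)$.

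The principal obstacle will be the uniform-in-$n$ equi-integrability of the regularized measures $\mathcal{K}_n^{\pm}(f_*\alpha)$ near the polar set $\{\varphi = -\infty\}$; this is where the good approximation scheme properties (Conditions 1) and 4) of Definition \ref{DefinitionGoodApproximation}) must be combined with Chebyshev against $-\varphi$ and the uniform mass bound already produced for $f^*\varphi$. Once this equi-integrability is established, both the existence of the limit and its independence from the chosen approximation scheme follow routinely.
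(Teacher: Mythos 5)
Your reduction to $f^*(\varphi)$ and the idea of approximating $\psi$ by continuous functions is reasonable in outline, but the step you yourself flag as ``the principal obstacle'' is a genuine gap, and the tools you propose do not close it. Write $\mu_n^{\pm}=\mathcal{K}_n^{\pm}(f_*\alpha)$ for a positive smooth measure $\alpha$. The uniform bound $\sup_n\int_Y(-\varphi)\,d\mu_n^{\pm}<\infty$ (which does follow from the well-definedness of $f^*(\varphi)$) together with Chebyshev only controls $\mu_n^{\pm}\{-\varphi>M\}\leq C/M$; it does \emph{not} control $\int_{\{-\varphi>M\}}(-\varphi)\,d\mu_n^{\pm}$ uniformly in $n$, which is what your domination $|\psi-\psi_j|\leq 2(-\varphi+C)$ actually requires. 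That quantity is the uniform integrability of the unbounded function $-\varphi$ against the family $\{\mu_n^{\pm}\}$, strictly stronger than boundedness of the integrals, and it can fail precisely in the dangerous scenario where the mass of $\mu_n^{\pm}$ concentrates near the polar set of $\varphi$ (e.g.\ when $f_*\alpha$ is singular along $f(\mathcal{C}_X)$ and $\varphi$ has poles there). A second, independent problem occurs on the ``good'' set $\{-\varphi\leq M\}$: you only have $\psi_j\to\psi$ almost everywhere for Lebesgue measure, while the densities of $\mu_n^{\pm}$ are not uniformly bounded (nor uniformly integrable) with respect to Lebesgue measure as $n\to\infty$, so $\int_{\{-\varphi\leq M\}}(\psi-\psi_j)\,d\mu_n^{\pm}$ need not tend to $0$ uniformly in $n$ as $j\to\infty$. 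Without both of these uniformities your sequence $\int_Y\psi\,\mathcal{K}_n(f_*\alpha)$ is not shown to be Cauchy.

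The paper avoids equi-integrability altogether by a sandwich-and-support argument. After reducing to $f$ holomorphic and $0\geq\psi\geq\varphi$, it defines $\langle S_n^{\pm},\alpha\rangle=\int_Y\psi\,\mathcal{K}_n^{\pm}(f_*\alpha)$ and observes $0\geq S_n^{\pm}\geq f^*(\mathcal{K}_n^{\pm}(\varphi))$. The forms $f^*(\mathcal{K}_n^{\pm}(\varphi))$ are negative $f^*(\Theta)$-plurisubharmonic with uniformly bounded mass, so by compactness of that class they converge in $L^1$ (along a subsequence) to $L^1$ functions; hence every cluster point $S=S^+-S^-$ of $S_n$ is squeezed between $0$ and an $L^1$ function and therefore charges no Lebesgue-null set. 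It then suffices to identify $S$ on the Zariski-open set where $f$ is a submersion: there $f_*\alpha$ is smooth, $\mathcal{K}_n(f_*\alpha)$ converges uniformly, and $\langle S,\alpha\rangle=\int_Y\psi\,f_*\alpha$ for all schemes. If you want to salvage your approach, you would need to import exactly this ingredient --- that all cluster points are absolutely continuous with respect to Lebesgue measure --- since that is what lets the a.e.\ identification on a Zariski-open set (or your a.e.\ approximation $\psi_j\to\psi$) determine the limit; the quasi-PSH compactness argument is the mechanism that supplies it, and your proposal currently has no substitute for it.
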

\begin{proof}
By desingularizing the graph $\Gamma _f$ if needed and using Theorem 4 in \cite{truong1}, we can assume without loss of generality that $f$ is holomorphic. By subtracting a constant from $\varphi$ if needed, we can assume that $\varphi \leq 0$. By the assumptions, we have $0\geq \psi \geq \varphi$. To prove that $f^{*}(\psi )$ is well-defined with respect to Definition \ref{DefinitionPullbackDdcOfOrderSCurrents}, we need to show the existence of a current $S$ so that for any smooth form $\alpha$ and any good approximation scheme by $C^2$ forms $\mathcal{K}_n$ then
\begin{equation}
\lim _{n\rightarrow\infty}\int _Y\psi \wedge \mathcal{K}_n(f_*(\alpha ))=\int _XS\wedge \alpha . 
\label{EquationProof1}\end{equation}
We define linear functionals $S_n$ and $S_n^{\pm}$ on top forms on $X$ by the formulas
\begin{eqnarray*}
<S_n,\alpha >&=&\int _Y\psi \wedge \mathcal{K}_n(f_*(\alpha )),\\
<S_n^{\pm},\alpha >&=&\int _Y\psi \wedge \mathcal{K}_n^{\pm}(f_*(\alpha )).
\end{eqnarray*}  
Then $S_n=S_n^+-S_n^-$, and it can be checked that $S_n^{\pm}$ are negative $(0,0)$ currents, and hence $S_n$ is a current of order $0$. Moreover, if $\alpha$ is a positive smooth measure then 
\begin{eqnarray*} 
0\geq <S_n^{\pm},\alpha >&=&\int _Y\psi \wedge \mathcal{K}_n^{\pm}(f_*(\alpha ))\\
&\geq&\int _Y\varphi \wedge \mathcal{K}_n^{\pm}(f^*(\alpha ))\\
&=&\int _Xf^*(\mathcal{K}_n^{\pm}(\varphi ))\wedge \alpha .
\end{eqnarray*}
Thus $0\geq S_n^{\pm}\geq f^*(\mathcal{K}_n^{\pm}(\varphi ))$ for all $n$.

Let us write $dd^c(\varphi )=T-\theta $ where $T$ is a positive closed $(1,1)$ current, and $\theta$ is a smooth closed $(1,1)$ form. By property 4) of Definition \ref{DefinitionGoodApproximation}, there is a strictly positive closed smooth $(1,1)$ form $\Theta$ so that $\Theta \geq \mathcal{K}_n^{\pm}(\theta )$ for any $n$. Then $f^*(\mathcal{K}_n^{\pm}(\varphi ))$ are negative $C^2$ forms so that 
\begin{eqnarray*}
dd^cf^*(\mathcal{K}_n^{\pm}(\varphi ))&=&f^*(\mathcal{K}_n^{\pm}(dd^c\varphi ))=f^*(\mathcal{K}_n^{\pm}(T-\theta ))\\
&\geq&f^*(\mathcal{K}_n^{\pm}(-\theta ))\geq -f^*(\Theta ) 
\end{eqnarray*}
for any $n$, i.e they are negative $f^*(\Theta )$-plurisubharmonic functions. Moreover the sequence of currents $f^*(\mathcal{K}_n^{\pm}(\varphi ))$ has uniformly bounded mass (see the proof of Theorem 6 in \cite{truong1}). Therefore, by the compactness of this class of functions (see Chapter 1 in \cite{demailly}), after passing to a subsequence if needed, we can assume that $f^*(\mathcal{K}_n^{\pm}(\varphi ))$ converges in $L^1$ to negative functions denoted by $f^*(\varphi ^{\pm})$. Let $S^{\pm}$ be any cluster points of $S_n^{\pm}$. Then $0\geq S^{\pm}\geq f^*(\varphi ^{\pm})$, which shows that any cluster point $S=S^+-S^-$ of $S_n$ has no mass on sets of Lebesgue measure zero. Hence to show that $S$ is uniquely defined, it suffices to show that $S$ is uniquely defined outside a proper analytic subset of $Y$. 

Let $E$ be a proper analytic subset of $Y$ so that $f:X-f^{-1}(E)\rightarrow Y-E$ is a holomorphic submersion. If $\alpha$ is a smooth measure whose support is compactly contained in $X-f^{-1}(E)$ then $f_*(\alpha )$ is a smooth measure on $Y$. Hence by condition 4) of Definition \ref{DefinitionGoodApproximation}, $\mathcal{K}_n(f_*(\alpha ))$ uniformly converges to the smooth measure $f_*(\alpha )$. Then it follows from the definition of $S$ that
\begin{eqnarray*}
<S,\alpha >=\int _{Y}\psi \wedge f_*(\alpha ).
\end{eqnarray*}
Hence $S$ is uniquely defined on $X-f^{-1}(E)$, and thus it is uniquely defined on the whole $X$, as wanted.
\end{proof}

\begin{proposition}
Let $X$ and $Y$ be compact K\"ahler manifolds and let $f:X\rightarrow Y$ be a dominant meromorphic map. Let $\pi _X,\pi _Y:X\times Y\rightarrow Y$ be the projections, and let $\Gamma _f\subset X\times Y$ be the graph of $f$. Let $V\subset Y$ be an irreducible variety. If $\pi _Y^(-1)(V)\cap \Gamma _f$ has dimension $\leq dim (V)$, then for any smooth $(q,q)$ form $\theta$ on $Y$ the pullback $f^*(\theta \wedge [V])$ is well-defined. If moreover $\theta$ is a positive form then $f^*(\theta \wedge [V])$ is also positive. 
\label{PropositionPullbackAVariety}\end{proposition}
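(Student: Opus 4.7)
The strategy is to reduce to a holomorphic setting via desingularization and then invoke Theorem \ref{TheoremPullbackCompatibleWithWedgeProduct}(b). Let $\pi : Z \to \Gamma_f$ be a desingularization of the graph, and put $p_X = \pi_X \circ \pi$ and $p_Y = \pi_Y \circ \pi$, so that $p_X$ is a bimeromorphic holomorphic map and $p_Y$ is holomorphic. The dimension hypothesis $\dim(\pi_Y^{-1}(V)\cap\Gamma_f)\leq \dim V$ lifts to $\dim p_Y^{-1}(V)\leq \dim V$, which is exactly the proper-intersection condition guaranteeing that the classical intersection current $p_Y^*([V])=[p_Y^{-1}(V)]$ (counted with multiplicities) is a well-defined positive closed $(\dim Y-\dim V,\dim Y-\dim V)$ current on $Z$.

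The first main step is to establish that, for any good approximation scheme $\mathcal{K}_n$ of sufficiently high regularity, $p_Y^*(\mathcal{K}_n([V]))\to p_Y^*([V])$ weakly on $Z$. Since $\mathcal{K}_n([V])\to [V]$ in the $DSH$ topology with uniformly bounded mass (property 1 of Definition \ref{DefinitionGoodApproximation}) and $p_Y$ is holomorphic with $p_Y^{-1}(V)$ of the correct dimension, this is the usual continuity statement for holomorphic pullback of positive closed currents of proper intersection, in the spirit of Theorem \ref{TheoremInterestingExample1}. Pushing forward by $\pi$, we obtain the convergence
\begin{eqnarray*}
\pi_Y^*(\mathcal{K}_n([V]))\wedge [\Gamma_f]\;\longrightarrow\;\pi_*\bigl(p_Y^*([V])\bigr)=:(\pi_Y|\Gamma_f)^*([V])
\end{eqnarray*}
as currents on $X\times Y$, the limit being independent of the approximation scheme.

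Since $[V]$ is positive closed, the trivial bound $-[V]\leq [V]\leq [V]$ satisfies the hypothesis of Theorem \ref{TheoremPullbackCompatibleWithWedgeProduct}(b). Applying that theorem yields that $f^*([V]\wedge \theta)$ is well-defined in the sense of Definition \ref{DefinitionPullbackDdcOfOrderSCurrents}, with
\begin{eqnarray*}
f^*([V]\wedge \theta)=(\pi_X)_*\bigl((\pi_Y|\Gamma_f)^*([V])\wedge \pi_Y^*(\theta)\bigr)=(p_X)_*\bigl(p_Y^*([V])\wedge p_Y^*(\theta)\bigr),
\end{eqnarray*}
the second equality coming from projecting the smooth factor $\pi_Y^*(\theta)$ through $\pi$. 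For the positivity assertion, when $\theta\geq 0$ the pullback $p_Y^*(\theta)$ is a positive smooth form, $p_Y^*([V])=[p_Y^{-1}(V)]$ is positive closed, their wedge is a positive current on $Z$, and pushforward by the proper holomorphic map $p_X$ preserves positivity.

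The hard part of the plan is the continuity step on $Z$: one must verify that $p_Y^*$ is continuous at $[V]$ along smooth approximants, controlling the pullback over the locus where $p_Y$ fails to be a submersion. This is exactly where the hypothesis $\dim(\pi_Y^{-1}(V)\cap\Gamma_f)\leq \dim V$ is used, as it ensures that even the singular fibres of $p_Y$ meet $V$ in a sufficiently proper way for $[p_Y^{-1}(V)]$ to be the correct limit; the auxiliary error terms coming from condition 9) of Definition \ref{DefinitionGoodApproximation} (which express the defect between $\mathcal{K}_n([V]\wedge\theta)$ and $\mathcal{K}_n([V])\wedge\theta$) can then be absorbed by the same quasi-potential argument as in the proof of Theorem \ref{TheoremPullbackCompatibleWithWedgeProduct}(a), using Lemma \ref{LemmaQuasiPotential}.
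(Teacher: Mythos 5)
Your second half is the paper's: once the currents $\pi_Y^*(\mathcal{K}_n([V]))\wedge[\Gamma_f]$ are known to converge to a well-defined limit $(\pi_Y|\Gamma_f)^*([V])$, Theorem \ref{TheoremPullbackCompatibleWithWedgeProduct}(b) finishes the proof exactly as you say. The gap is that you never actually establish that convergence, and the two devices you offer in its place do not work. First, the claim that the hypothesis $\dim(\pi_Y^{-1}(V)\cap\Gamma_f)\leq\dim V$ ``lifts'' to $\dim p_Y^{-1}(V)\leq\dim V$ on a desingularization $Z\to\Gamma_f$ is false in general: the exceptional set of the desingularization can sit over a positive-dimensional subset of $\pi_Y^{-1}(V)\cap\Gamma_f$, in which case $p_Y^{-1}(V)$ acquires components of excess dimension and the proper-intersection current $[p_Y^{-1}(V)]$ is no longer the relevant object. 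Second, the ``usual continuity statement'' you invoke for $p_Y^*(\mathcal{K}_n([V]))\to p_Y^*([V])$ is not an instance of Theorem \ref{TheoremInterestingExample1}: that theorem gives continuity of $f^*$ on all of $\mathcal{D}^p$ under a codimension condition on the critical image of the map, whereas what you need is continuity at the single current $[V]$ under a condition relating $V$ to the map. Pullback of positive closed currents is not continuous in general, so this is precisely the nontrivial analytic content of the proposition, and you flag it as ``the hard part'' without supplying an argument. Note also that the desingularization is a red herring: $\pi_Y$ is already a holomorphic submersion, so $\pi_Y^*([V])$ poses no problem; the whole difficulty is the wedge with $[\Gamma_f]$.

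The paper closes this gap with Lemma \ref{LemmaIntersectionOfVarieties}, applied directly on $X\times Y$ with $T_1=\pi_Y^*([V])$ (continuous, indeed zero, off $A_1=\pi_Y^{-1}(V)$) and $T_2=[\Gamma_f]$ (continuous off $A_2=\Gamma_f$); the hypothesis $\dim(\pi_Y^{-1}(V)\cap\Gamma_f)\leq\dim V$ together with $\dim X\geq\dim Y$ gives $\mathrm{codim}(A_1\cap A_2)\geq p+q$. The proof of that lemma is where the real work happens: one writes $T_1=\theta+dd^c(R^+-R^-)$ with $R^{\pm}$ DSH and continuous off $A_1$, uses condition 4) of Definition \ref{DefinitionGoodApproximation} to get local uniform convergence of $\mathcal{K}_n^{\pm}(R^{\pm})$ off $A_1$, and then identifies the limit as the trivial extension of $(R^+-R^-)|_{X\times Y-A_1\cap A_2}\wedge T_2$ via the Federer-type support theorem of Bassanelli, with positivity coming from Oka's inequality. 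None of this machinery appears in your proposal, so as written the argument does not go through.
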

\begin{proof}
By Lemma \ref{LemmaIntersectionOfVarieties} below we have that the intersection $\pi _Y^*([V])\wedge [\Gamma _f]$ is well-defined, and is moreover positive. Apply part b) of Theorem \ref{TheoremPullbackCompatibleWithWedgeProduct} we obtain Proposition \ref{PropositionPullbackAVariety}.
\end{proof}

\subsection{Intersection of currents}

We give the following definition of intersection of currents. It corresponds to the definition of pullback of currents for the identity map. (There are many different approaches of intersection of currents in the literature. For some references please see Bedford-Taylor \cite{bedford-taylor}, Fornaess-Sibony \cite{fornaess-sibony2}, Demailly \cite{demailly}, and Dinh-Sibony \cite{dinh-sibony4}\cite{dinh-sibony3}\cite{dinh-sibony9}.)

\begin{definition}
Let $Y$ be a compact K\"ahler manifold. Let $T_1$ be a $DSH$ current and let $T_2$ be a $(q,q)$ current of order $s$ on $Y$. Let $s_0$ be the order of $T_2$. We say that $T_1\wedge T_2$ is well-defined if there is $s\geq s_0$ and a current $S$ so that for any good approximation scheme by $C^{s+2}$ forms $\mathcal{K}_n$ then $\lim _{n\rightarrow\infty}\mathcal{K}_n(T_1)\wedge T_2=S$. Then we write $T_1\wedge T_2=S$. 
\label{DefinitionIntersectionCurrents}\end{definition}  
     
We now prove some properties of this intersection.

\begin{theorem}
Let $T_1$ and $T_2$ be positive $dd^c$-closed currents. Assume that $T_1\wedge T_2$ is well-defined. Let $\theta$ be a smooth $(q,q)$ form. 

a) $\theta \wedge T_2$ and $T_2\wedge \theta$ are well-defined and are the same as the usual definition. 

b) $T_2\wedge T_1$ is also well-defined. Moreover, $T_1\wedge T_2=T_2\wedge T_1$.

c) $T_1\wedge (\theta \wedge T_2)$ is also well-defined. Moreover $T_1\wedge (\theta \wedge T_2)=(T_1\wedge T_2)\wedge \theta$.
\label{TheoremSymmertyOfIntersectionCurrents}\end{theorem}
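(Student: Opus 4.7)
The plan is to handle the three parts sequentially; parts (a) and (c) are consequences of the smoothing properties of the kernels, while part (b) carries the essential content. For (a), since $\theta$ is smooth (hence continuous), property 4 of Definition \ref{DefinitionGoodApproximation} gives $\mathcal{K}_n(\theta)\to \theta$ locally uniformly. Wedging against the positive order-zero current $T_2$ and testing against a smooth form $\alpha$ yields $\int \mathcal{K}_n(\theta)\wedge T_2\wedge \alpha\to \int \theta\wedge T_2\wedge\alpha$, recovering the classical wedge product. The identity for $T_2\wedge\theta$ is analogous, since $\mathcal{K}_n(T_2)$ is a $C^{s+2}$ form converging weakly to $T_2$ and $\theta$ is smooth.

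For part (c), once $\mathcal{K}_n(T_1)$ has been replaced by a $C^{s+2}$ form, wedges with the smooth $\theta$ commute freely, so for any smooth test $\alpha$,
\begin{eqnarray*}
\int \mathcal{K}_n(T_1)\wedge (\theta\wedge T_2)\wedge \alpha =\int \mathcal{K}_n(T_1)\wedge T_2\wedge (\theta\wedge \alpha)\to \int (T_1\wedge T_2)\wedge \theta\wedge \alpha,
\end{eqnarray*}
by the hypothesis applied with the smooth test form $\theta\wedge \alpha$. This gives $T_1\wedge (\theta\wedge T_2)=(T_1\wedge T_2)\wedge \theta$.

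The crux is (b). My strategy is to transfer $\mathcal{K}_n$ from $T_1$ to $T_2$ using self-adjointness (property 6) and to control the commutator via compatibility with wedge (property 9, i.e.\ Lemma \ref{TheoremRegularizationCompactibleWithWedgproduct}). For any smooth test $\alpha$,
\begin{eqnarray*}
\int \mathcal{K}_n(T_1)\wedge T_2\wedge \alpha =\int T_1\wedge \mathcal{K}_n(T_2\wedge \alpha) =\int \mathcal{K}_n(T_2)\wedge T_1\wedge \alpha+\int T_1\wedge E_n,
\end{eqnarray*}
where $E_n:=\mathcal{K}_n(T_2\wedge \alpha)-\mathcal{K}_n(T_2)\wedge \alpha$ satisfies $-R_n\le E_n\le R_n$ for positive $dd^c$-closed currents $R_n$ with $||R_n||\to 0$ (apply Lemma \ref{TheoremRegularizationCompactibleWithWedgproduct} with $R=T_2$). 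To estimate the error term I decompose $R_n\le \alpha_n+dd^c S_n$ via Lemma \ref{LemmaQuasiPotential} with $||\alpha_n||_{L^\infty},||S_n||\to 0$, regularize $T_1$ by $\mathcal{K}_m^\pm(T_1)$, and compute
\begin{eqnarray*}
\int \mathcal{K}_m^\pm(T_1)\wedge R_n\le \int \mathcal{K}_m^\pm(T_1)\wedge \alpha_n+\int \mathcal{K}_m^\pm(dd^cT_1)\wedge S_n\le C||\alpha_n||_{L^\infty},
\end{eqnarray*}
using property 7 to move $dd^c$ onto $T_1$ together with $dd^cT_1=0$ to kill the second integral, and the uniform mass bound on $\mathcal{K}_m^\pm(T_1)$ from property 2. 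Sending $m\to \infty$ (allowed since $E_n$ is a smooth form, and $\mathcal{K}_m(T_1)\to T_1$ weakly) gives $\left|\int T_1\wedge E_n\right|\le 2C||\alpha_n||_{L^\infty}\to 0$. Combined with the hypothesis $\int \mathcal{K}_n(T_1)\wedge T_2\wedge \alpha\to \int (T_1\wedge T_2)\wedge \alpha$, this yields $\int \mathcal{K}_n(T_2)\wedge T_1\wedge \alpha\to \int (T_1\wedge T_2)\wedge \alpha$, i.e.\ $T_2\wedge T_1=T_1\wedge T_2$.

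The main obstacle is exactly the estimate $\int T_1\wedge E_n\to 0$ in part (b). After the quasi-potential decomposition, one is faced with a pairing $\int T_1\wedge dd^c S_n$ between two singular currents, which cannot be handled directly; it requires first regularizing $T_1$ into a smooth form and then exploiting compatibility of the kernel with $dd^c$ together with the hypothesis $dd^cT_1=0$ to eliminate the boundary contribution. Without $dd^c$-closedness the commutator would generate an uncontrolled term, which is why this assumption is essential to the theorem.
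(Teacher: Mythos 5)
Your proof is correct and takes essentially the same route as the paper's: parts a) and c) via the uniform/weak convergence of $\mathcal{K}_n$ on smooth forms together with properties 4), 6) and 9) of Definition \ref{DefinitionGoodApproximation}, and part b) by transferring the kernel through self-adjointness and controlling the commutator $\mathcal{K}_n(T_2\wedge\alpha)-\mathcal{K}_n(T_2)\wedge\alpha$ with the quasi-potential decomposition of Lemma \ref{LemmaQuasiPotential} plus the $dd^c$-closedness of $T_1$, which is exactly the mechanism used in the paper's proof of Theorem \ref{TheoremPullbackCompatibleWithWedgeProduct}. The paper writes out only part a) and declares b) and c) ``similar,'' so your argument supplies precisely the details left implicit there.
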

\begin{proof}(Of Theorem \ref{TheoremSymmertyOfIntersectionCurrents})

Proof of a): Let $\mathcal{K}_n$ be a good approximation scheme by $C^2$ forms. Then $\mathcal{K}_n(\theta )$ uniformly converges to $\theta$, and hence $\mathcal{K}_n(\theta )\wedge T_2$ converges to the usual intersection $\theta \wedge T_2$.

Let $\alpha$ be a smooth form. Then by conditions 9), 6) and 4) of Definition \ref{DefinitionGoodApproximation}, we have
\begin{eqnarray*} 
\lim _{n\rightarrow\infty}\int _Y\mathcal{K}_n(T_2)\wedge \theta \wedge \alpha&=&\lim _{n\rightarrow\infty}\int _Y\mathcal{K}_n(T_2\wedge \theta ) \wedge \alpha\\
&=&\lim _{n\rightarrow\infty}\int _YT_2\wedge \theta \wedge \mathcal{K}_n(\alpha )\\
&=&\int _YT_2\wedge \theta \wedge \alpha . 
\end{eqnarray*}

The proofs of b) and c) are similar.
\end{proof}

\begin{lemma}
Let $T_1$ and $T_2$ be positive closed $(p,p)$ and $(q,q)$ currents of $Y$. Assume that there are closed sets $A_1\subset Y$ and $A_2\subset Y$ so that $T_i$ is continuous on $Y-A_{i}$ for $i=1,2$. Assume moreover that $A_1\cap A_2$ is contained in an analytic set of codimension $\geq p+q$ of $Y$. Then $T_1\wedge T_2$ is well-defined. If moreover one of $T_1$ and $T_2$ is strongly positive then $T_1\wedge T_2$ is positive.  
\label{LemmaIntersectionOfVarieties}\end{lemma}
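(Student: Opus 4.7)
The plan is to build a candidate limit current $T_1\wedge T_2$ off an analytic set $V\supset A_1\cap A_2$ of codimension $\ge p+q$ by classical pairing, and then verify that any good-approximation regularization converges to it. Off $A_1\cap A_2$, at each point at least one factor is continuous, so on $U_1:=Y\setminus A_1$ the product $T_1\wedge T_2$ makes sense classically (continuous form paired with a current), and symmetrically on $U_2:=Y\setminus A_2$; the two definitions agree on $U_1\cap U_2$ and glue to a current $S$ on $U_1\cup U_2\supset Y\setminus V$ which is positive when one of $T_1,T_2$ is strongly positive.

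For convergence, I take a test form $\alpha$ and a partition of unity $1=\chi_1+\chi_2+\chi_V$ subordinate to $\{U_1,U_2,U_V\}$, with $U_V$ a small neighborhood of $V$. On $\chi_1\alpha$, property 4) of Definition \ref{DefinitionGoodApproximation} gives $\mathcal{K}_n(T_1)\to T_1$ locally uniformly on $\mathrm{supp}(\chi_1)$, so pairing uniformly-convergent forms against the order-zero current $T_2$ yields $\int\mathcal{K}_n(T_1)\wedge T_2\wedge\chi_1\alpha\to\int S\wedge\chi_1\alpha$. On $\chi_2\alpha$, the form $T_2\wedge\chi_2\alpha$ is continuous and compactly supported on $Y$; since $\mathcal{K}_n^\pm(T_1)$ are positive with uniformly bounded mass (property 2)), every weak-$*$ cluster of the signed measure $\mathcal{K}_n(T_1)=\mathcal{K}_n^+(T_1)-\mathcal{K}_n^-(T_1)$ agrees with $T_1$ on continuous forms (smooth ones being dense), so the full sequence $\int\mathcal{K}_n(T_1)\wedge T_2\wedge\chi_2\alpha$ converges scheme-independently to $\int S\wedge\chi_2\alpha$.

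The main obstacle is the $\chi_V$-piece together with global scheme-independence. Each $\mathcal{K}_n^\pm(T_1)\wedge T_2$ is a positive closed $(p+q,p+q)$-current with cohomologically fixed mass, so weak cluster currents $R,R'$ on $Y$ exist and restrict to $S$ on $Y\setminus V$; their difference is a closed $(p+q,p+q)$-current supported on $V$. The codimension bound $\mathrm{codim}\,V\ge p+q$ is calibrated so that Demailly's support theorem applies: when $\mathrm{codim}\,V>p+q$ there is no such current and $R=R'$ immediately. In the borderline case $\mathrm{codim}\,V=p+q$, the difference is a combination of integration currents on the codim-$(p+q)$ components of $V$, and the coefficients must be pinned down using the common cohomology class $\{T_1\}\cdot\{T_2\}$ together with the already-determined off-$V$ mass of $S$; I expect this cohomological bookkeeping in the borderline case to be the most delicate step. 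Finally, positivity of $T_1\wedge T_2$ under strong positivity of one factor passes from the positivity of each $\mathcal{K}_n^\pm(T_1)\wedge T_2$ to the weak limit.
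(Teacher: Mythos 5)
Your reduction to the two ``easy'' pieces $\chi_1\alpha$ and $\chi_2\alpha$ is fine, and you have correctly located the difficulty: everything hinges on the borderline case $\mathrm{codim}\,V=p+q$. But the way you propose to close that case does not work. Two weak cluster points $R,R'$ of $\mathcal{K}_n(T_1)\wedge T_2$ differ by a closed order-zero current of bidimension exactly $\dim V$ supported on $V$, hence by the support theorem $R-R'=\sum_i c_i[V_i]$ over the top-dimensional components $V_i$ of $V$. The only information cohomology gives you is the single linear relation $\sum_i c_i\{V_i\}=0$ (both cluster points represent $\{T_1\}\cdot\{T_2\}$), and since $R-R'$ is a difference of positive currents the $c_i$ have no sign constraint; if, say, two components have equal classes, $c_1=-c_2\neq 0$ is not excluded. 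Nor can you pin the coefficients down by ``the off-$V$ mass of $S$'': different subsequences or schemes may deposit different amounts of mass on $V$, which is exactly what you are trying to rule out. A separate, smaller issue: your last sentence infers positivity of the limit from positivity of each $\mathcal{K}_n^{\pm}(T_1)\wedge T_2$, but the approximants $\mathcal{K}_n(T_1)=\mathcal{K}_n^+(T_1)-\mathcal{K}_n^-(T_1)$ are differences, so their wedge with $T_2$ is not positive and positivity does not simply pass to the limit.

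The paper sidesteps the borderline case entirely by dropping one degree: write $T_1=\theta+dd^c(R^+-R^-)$ with $\theta$ smooth closed and $R^{\pm}$ positive DSH $(p-1,p-1)$ currents continuous outside $A_1$ (Proposition 2.1 of \cite{dinh-sibony5}), and study $\mathcal{K}_n(R^+-R^-)\wedge T_2$ instead. These currents have bidimension $k-p-q+1$, strictly larger than $\dim(A_1\cap A_2)\leq k-p-q$, so the Federer-type support theorem for $\mathbb{C}$-normal currents (Bassanelli \cite{bassanelli}) kills the difference of any two cluster points outright --- no cohomological bookkeeping, and no extra mass can appear on $V$, so the limit is the trivial extension of the classically defined product. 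Positivity is then obtained at the level of local potentials via Oka's inequality \cite{fornaess-sibony2}, approximating the potential by negative forms with $dd^c$ bounded below. If you want to salvage your direct approach you would need an a priori argument that no cluster point charges $V$; passing to quasi-potentials is precisely the device that supplies this.
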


\begin{proof} (Of Lemma \ref{LemmaIntersectionOfVarieties})
Let $\theta$ be a smooth $(p,p)$ form having the same cohomology class as that of $T_1$. Then by Proposition 2.1 in \cite{dinh-sibony5}, there are positive $(p-1,p-1)$ currents $R^{\pm}$ so that $T_1-\theta =dd^c(R^+-R^-)$. Moreover, $R^{\pm}$ are $DSH$ and we can choose so that $R^{\pm}$ are continuous outside $A_1$. To prove Lemma \ref{LemmaIntersectionOfVarieties}, it suffices to show that there is a current $S$ so that for any good approximation scheme by $C^2$ forms $\mathcal{K}_n$ then 
\begin{eqnarray*}
\lim _{n\rightarrow\infty}\mathcal{K}_n(R^+-R^-)\wedge T_2=S.
\end{eqnarray*}
The sequence $\mathcal{K}_n^{\pm}(R^{\pm})\wedge T_2$ converges on $Y-A_1\cap A_2$. In fact, outside of $A_2$ then $T_2$ is continuous hence $\lim_{n\rightarrow\infty}\mathcal{K}_n^{\pm}(R^{\pm})\wedge T_2=R^{\pm}\wedge T_2$, and outside of $V_1$ then $\mathcal{K}_n^{\pm}(R^{\pm})$ converges locally uniformly (by condition 4) of Definition \ref{DefinitionGoodApproximation}) to a continuous form and hence $\mathcal{K}_n^{\pm}(R^{\pm})\wedge T_2$ converges. Then by an argument as in the proof of Theorem 6 in \cite{truong1} using the Federer-type support theorem in Bassanelli \cite{bassanelli}, the limit current is the trivial extension of $(R^+-R^-)|_{X-A_1\cap A_2}\wedge T_2$. In particular, we see that our definition coincides with the local definition. Since locally we can choose a local potential $H$ of $\theta$ so that the the sum of $H$ and $R^+-R^-$ gives a negative current continuous out of $A_1$ which can be well approximated by smooth negative forms whose $dd^c$ are strictly positive, the Oka's principle in \cite{fornaess-sibony2} implies that $T_1\wedge T_2$ is positive. This completes the proof of Lemma \ref{LemmaIntersectionOfVarieties}.
\end{proof}

\section{Proofs of the main results}

\subsection{Proof of Theorem \ref{TheoremPullbackPushforwardCurrents}}

That the operators $f_*,f^*$ are well-defined on spaces $\mathcal{D}^1, DSH^1$ and $\mathcal{D}^2$, and are continuous with respect to the topologies on these spaces follow from Theorem \ref{TheoremInterestingExample1} and Lemma \ref{LemmaImageOfCurveByPseudoAutomorphisms}. 

Now we show the compatibility of these operators with iterations. 

a) First we show that if $T\in DSH^{1}(X)$ then $(f^n)^*(T)=(f^*)^n(T)$ for any $n\in \mathbb{N}$. Since all the operators are continuous in the topology on $DSH^1(X)$, it suffices to prove this when $T$ is a smooth form. In this case we can proceed as in the proof of Lemma \ref{LemmaPseudoAutomorphismCompatibility}. The two currents $(f^n)^*(T)$ and $(f^*)^n(T)$ differ only on an analytic set of dimension $\leq 1$. Therefore, the current $(f^n)^*(T)-(f^*)^n(T)$ is a DSH $(1,1)$ current with support on an analytic set of dimension $\leq 1$. Since $DSH$ currents are $\mathbb{C}$-normal in the sense of Bassanelli \cite{bassanelli}, the Federer-type support theorem for $\mathbb{C}$-normal currents implies that $(f^n)^*(T)-(f^*)^n(T)$ is the zero current, i.e. $(f^n)^*(T)=(f^*)^n(T)$.

b) To extend a) to all $n\in \mathbb{Z}$ we need only to show that $(f^{-1})^*=(f^*)^{-1}$. Because $(f^{-1})^*=f_*$, it suffices to check that $f_*f^*=Id$ on $DSH^{1,1}(X)$ currents. To this end we can proceed as in a).

c) Since $\mathcal{D}^1\subset DSH ^{1,1}(X)$, we obtain the compatibility of $f^*,f_*$ for $\mathcal{D}^1$ as well. 

d) Since the operators considered are continuous on $\mathcal{D}^p$,  to prove the compatibility for $\mathcal{D}^2$, it suffices to prove the claim for smooth closed $(2,2)$ forms. Hence we need to show the following: let $\eta$ be a smooth closed $(2,2)$ form and let $\theta$ be a smooth $(1,1)$ form (not necessarily closed), then 
\begin{eqnarray*}
\int _X(f^*)^n(\eta )\wedge \theta =\int _X(f^n)^*(\eta )\wedge \theta ,
\end{eqnarray*}
for any $n\in \mathbb{N}$. By definition 
\begin{eqnarray*}
\int _X(f^n)^*(\eta )\wedge \theta =\int _X\eta \wedge (f^n)_*(\theta ),
\end{eqnarray*}
and the latter equals to
\begin{eqnarray*}
\int _X\eta \wedge (f_*)^n(\theta ),
\end{eqnarray*}
since $f_*$ is compatible with iteration on $DSH^{1,1}(X)$. Therefore, we need to show only that 
\begin{eqnarray*}
\int _X(f^*)^n(\eta )\wedge \theta =\int _X\eta \wedge (f_*)^n(\theta ),
\end{eqnarray*}
for any $n\in \mathbb{N}$.

We prove this by induction on $n$. When $n=1$, the equality follows from definition of $f^*$ and $f_*$. Assume that we already have 
\begin{eqnarray*}
\int _X(f^*)^m(\eta )\wedge \theta =\int _X\eta \wedge (f_*)^m(\theta ),
\end{eqnarray*}
for some number $m\in \mathbb{N}$. Then we will show that
\begin{eqnarray*}
\int _X(f^*)^{m+1}(\eta )\wedge \theta =\int _X\eta \wedge (f_*)^{m+1}(\theta ).
\end{eqnarray*}
 Let $\mathcal{K}_j$ be a good approximation of DSH currents by $C^2$ forms. Then $(f^*)^{m+1}(\eta )=\lim _{j\rightarrow\infty}f^*(\mathcal{K}_j(f^m)^*(\eta ))$ by the continuity of $f^*$ on $\mathcal{D}^2$. Therefore
\begin{eqnarray*} 
 \int _X(f^*)^{m+1}(\eta )\wedge \theta &=&\lim _{j\rightarrow\infty}\int _Xf^*(\mathcal{K}_j(f^*)^m(\eta ))\wedge \theta \\
 &=&\lim _{j\rightarrow\infty}\int _X\mathcal{K}_j(f^*)^m(\eta )\wedge f_*(\theta ). 
\end{eqnarray*} 
 By property 6) in Definition \ref{DefinitionGoodApproximation}, we have for any $j\in \mathbb{N}$
\begin{eqnarray*} 
 \int _X\mathcal{K}_j(f^*)^m(\eta )\wedge f_*(\theta )=\int _X(f^*)^m(\eta )\wedge \mathcal{K}_jf_*(\theta ).
\end{eqnarray*}
The currents $\mathcal{K}_jf_*(\theta )$ are $C^2$ forms by definition of $\mathcal{K}_j$, hence can be approximated uniformly by smooth $(1,1)$ forms. Therefore the induction assumption implies
\begin{eqnarray*} 
\int _X(f^*)^m(\eta )\wedge \mathcal{K}_jf_*(\theta )=\int _X\eta \wedge (f_*)^m(\mathcal{K}_jf_*(\theta ))
\end{eqnarray*}
for any $j\in \mathbb{N}$. Since $f_*$ is continuous on $DSH^{1,1}(X)$, $\lim _{j\rightarrow\infty}(f_*)^m(\mathcal{K}_jf_*(\theta ))=(f_*)^{m+1}(\theta )$. Therefore, we obtain 
\begin{eqnarray*}
\int _X(f^*)^{m+1}(\eta )\wedge \theta =\int _X\eta \wedge (f_*)^{m+1}(\theta ), 
\end{eqnarray*}
and complete the induction step, and also of Theorem \ref{TheoremPullbackPushforwardCurrents}.

\subsection{Proof of Theorem \ref{TheoremInvariantCurrents}} 

Theorem 1 in \cite{truong3} shows that under assumptions of Theorem \ref{TheoremInvariantCurrents} the growth of $||(f^n)^*_{H^{1,1}(X)}||=||(f^*)^n_{H^{1,1}(X)}||$ is $\sim \lambda _1(f)^n$.  
 
a) Let $\theta$ be a smooth closed $(1,1)$ form. We write $\theta =\theta ^+-\theta ^-$ where $\theta ^{\pm}$ are positive closed smooth $(1,1)$ forms. Since $f$ is $1$-algebraic stable, since the growth of $||(f^n)^*_{H^{1,1}(X)}||=||(f^*)^n_{H^{1,1}(X)}||$ is $\sim \lambda _1(f)^n$, it follows that there is a constant $C>0$ so that $||(f^*)^n(\theta ^{\pm})||\leq C\lambda _1(f)^n$ for any $n\in \mathbb{N}$. Hence for any $N\in \mathbb{N}$, the Cesaro's means 
\begin{eqnarray*}
T_N^{\pm}=\frac{1}{N}\sum _{j=1}^N\frac{(f^*)^j(\theta ^{\pm})}{\lambda _1(f)^j}, 
\end{eqnarray*}
are positive closed $(1,1)$ currents of mass $\leq C$. Therefore we can find a subsequence $N_j$ so that the sequences $T_{N_j}^{\pm}$ weakly converges to positive closed $(1,1)$ currents $T^{\pm}$. We define $T^{+}_{\theta }=T^+-T^-$. Then it is easy to check that $(f^*)(T^{\pm})=\lambda _1(f)T^{\pm}$, and hence $f^*(T^{+}_{\theta })=\lambda _1(f)T^{+}_{\theta }$. 

If the cohomology class $\{\theta\}\in H^{1,1}(X)$ is so that $f^*\{\theta \}=\lambda _1(f)\{\theta \}$, then $(f^*)^j\{\theta ^{+}-\theta ^{-}\}=\lambda _1(f)^j\{\theta \}$ for any $j$. Hence $\{T_N^+-T_N^-\}=\{\theta\}$ for all $N$, and therefore the cohomology class of $T^{+}_{\theta }$ is $\{\theta \}$. 

If we choose $\theta$ to be a K\"ahler form, then $T^+$ is also positive, and because the growth of $||(f^n)^*||_{H^{1,1}}$ is $\sim \lambda _1(f)^n$, $T^+$ is non-zero. In this case, we show that $T^+$ has no mass on hypersurface. This follows from the following claim:

Claim: Let $T$ be a positive closed $(1,1)$ current such that $f^*(T)=\lambda T$ for some $\lambda >1$. Then $T$ has no mass on hypersurfaces.

Proof of the claim:

This claim follows from standard arguments (see Theorem 2.4 in \cite{diller-dujardin-guedj1}). We prove by contradiction. Assume otherwise that $T$ charges hypersurfaces. Then there is a hypersurface $V$ and a number $c>0$ so that the Lelong number of $T$ along $V$ is $c$. Since $X$ is compact, by Siu's decomposition theory there is a number $M>0$ so that $\nu (T,x)\leq M$ for all $x\in X$. Let $n$ be a positive integer number so that $c>M/\lambda ^n$. Let $\mathcal{E}_f=\pi _1(\mathcal{C}_2)$ be the critical set of $f$ and let $\mathcal{I}_f$ be the indeterminacy set of $f$. By Lemma \ref{LemmaImageOfCurveByPseudoAutomorphisms}, the set $A=\{x\in X:f(x)\in\mathcal{I}_f\}\cup \{x\in X:f(x)\in\mathcal{E}_f\}$ is an analytic subset of dimension $\leq 1$. Then by results of Demailly \cite{demailly2} and Favre \cite{favre}, for all $x\in X-A$

\begin{eqnarray*}    
\nu (T,x)=\frac{1}{\lambda ^n}\nu ((f^n)^*T,x)\leq \frac{1}{\lambda ^n}\nu (T,f^n(x))\leq \frac{M}{\lambda ^n}<c. 
\end{eqnarray*}
Therefore the contradiction assumption is false, which means that $T$ has no mass on hypersurfaces. 

b) The proof of b) is similar, using Proposition \ref{Remark4} below.  

c) For any $n\in \mathbb{N}$ and any smooth closed $(2,2)$ form, we have by Lemma \ref{LemmaPseudoAutomorphismCompatibility}
\begin{eqnarray*}
\{T^+_{\theta}\}.\{(f^n)_*(\eta )\}&=&\{(f^n)^*(T^+_{\theta }).\eta \}\\
&=&\lambda _1(f)^n\{T^+_{\theta }.\eta \},  
\end{eqnarray*}
because $T^+_{\theta }$ is $f^*$ invariant. Since $T^-_{\eta}$ is a Cesaro's means of the currents $(f^n)_*(\eta )$, we have that $\{T^+_{\theta }\}.\{T^{-}_{\eta}\}=\{T^+_{\theta}\}.\{\eta\}$. Similarly we get $\{T^+_{\theta }\}.\{T^{-}_{\eta}\}=\{\theta\}.\{T^{-}_{\eta}\}$.

If we choose $\theta$ and $\eta$ to be strictly positive closed smooth forms, then $\{T^+\}.\{T^-\}>0$.

\subsection{Analytic stability}
We give here a result needed in the proof of Theorem \ref{TheoremInvariantCurrents}. This result was given as a remark without proof in \cite{truong1}. Let $X$ be a compact K\"ahler manifold of dimension $k$, and let $f:X\rightarrow X$ be a dominant meromorphic map. We recall (see Section 2) that the map $f$ is called $p$-algebraic stable if $(f^*)^n=(f^n)^*$ as linear maps on $H^{p,p}(X)$ for all $n=1,2,\ldots $. When this condition is satisfied, it follows that $\lambda _p(f)=r_p(f)$, thus helps in determining the $p$-th dynamical degree of $f$. 

There is also the related condition of $p$-analytic stable (implicitly used in \cite{dinh-sibony4} in the case $X$ is the projective space $\mathbb{P}^k$) which requires that

1) $(f^{n})^{*}(T)$ is well-defined for any positive closed $(p,p)$ current $T$ and any $n\geq 1$.

2) Moreover, $(f^n)^{*}(T)=(f^{*})^n(T)$ for any positive closed $(p,p)$ current $T$ and any $n\geq 2$.

Since $H^{p,p}(X)$ is generated by classes of positive closed smooth $(p,p)$ forms, $p$-analytic stability implies $p$-algebraic stability. For the converse of this, we have the following observation

\begin{proposition}
Let $X$ be a compact K\"ahler manifold, and $f:X\rightarrow X$ a dominant meromorphic map. If $\pi _1(\mathcal{C}_f)$ has codimension $\geq p$, then $f$ is $p$-analytic stable iff it is $p$-algebraic stable and satisfies condition 1) above so that $(f^{*})^n(\alpha )$ is positive closed for
any positive closed smooth $(p,p)$ form $\alpha$ and for any $n\geq 1$. Hence $1$-algebraic stability is the same as $1$-analytic stability.
\label{Remark4}\end{proposition}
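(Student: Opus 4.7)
The plan is to prove the two directions of the iff separately.

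For the forward direction, suppose $f$ is $p$-analytic stable. Condition 1) is built into the definition. Applied to an arbitrary positive closed smooth $(p,p)$ form $\alpha$, the current-level equality $(f^*)^n(\alpha) = (f^n)^*(\alpha)$ immediately yields positivity of $(f^*)^n(\alpha)$, since the right-hand side is the usual pullback of a positive closed smooth form by the meromorphic map $f^n$ and hence is positive closed. Passing to cohomology and using that $H^{p,p}(X)$ is generated by classes of positive closed smooth forms gives $p$-algebraic stability.

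For the backward direction, I would proceed by induction on $n$, the case $n = 1$ being trivial. By the continuity of all operators involved on $\mathcal{D}^p(X)$ (Theorem \ref{TheoremInterestingExample1}), it suffices to establish $(f^*)^n(\alpha) = (f^n)^*(\alpha)$ for every positive closed smooth $(p,p)$ form $\alpha$. Using the induction hypothesis, I would write $(f^*)^n(\alpha) = f^*((f^{n-1})^*(\alpha))$ and compare this with $(f^n)^*(\alpha)$. On the Zariski open set $U = X \setminus \mathcal{I}_{f^n}$ where $f^n$ is holomorphic, $(f^{n-1})^*(\alpha)$ is the smooth classical pullback, and the chain rule gives $f^*((f^{n-1})^*(\alpha)) = (f^n)^*(\alpha)$ pointwise on $U$. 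Consequently, $D := (f^*)^n(\alpha) - (f^n)^*(\alpha)$ is a closed $(p,p)$ current supported on $E := \mathcal{I}_{f^n}$, an analytic subset of codimension $\geq p$ in $X$ (the standing hypothesis $\mathrm{codim}\,\pi_1(\mathcal{C}_f) \geq p$ together with condition 1) controls all iterates).

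The main obstacle is to conclude $D = 0$. Both $(f^*)^n(\alpha)$ and $(f^n)^*(\alpha)$ are positive closed $(p,p)$ currents sharing the same cohomology class by $p$-algebraic stability, so $D$ is $\mathbb{C}$-normal with $\{D\} = 0$. The Federer-type support theorem of Bassanelli \cite{bassanelli} then forces $D = \sum_i c_i [V_i]$ as a finite combination of integration currents along the $(k-p)$-dimensional irreducible components $V_i$ of $E$. Now $(f^n)^*(\alpha) = (\pi_1)_*(\pi_2^*\alpha \wedge [\Gamma_{f^n}])$ is the pushforward of a smooth form by a modification (after resolving the graph $\Gamma_{f^n}$), so it has $L^1$ coefficients and vanishing Lelong numbers along every $V_i$; together with positivity of $(f^*)^n(\alpha)$, this forces $c_i \geq 0$. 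Pairing the identity $\{D\} = 0$ with the K\"ahler class $\{\omega_X^{k-p}\}$ yields $\sum_i c_i \int_{V_i} \omega_X^{k-p} = 0$, and since each integral is strictly positive while every $c_i \geq 0$, all $c_i$ vanish. Hence $D = 0$, completing the induction and establishing $(f^*)^n = (f^n)^*$ on $\mathcal{D}^p(X)$.
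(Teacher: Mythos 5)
Your overall strategy --- reduce to smooth positive closed forms, show the two pullbacks agree off a proper analytic set, and kill the difference $D=(f^*)^n(\alpha)-(f^n)^*(\alpha)$ using positivity together with $\{D\}=0$ --- is exactly the paper's strategy, and your forward direction and your final mass computation are fine. The genuine problem is the step where you invoke the Federer-type support theorem to write $D=\sum_i c_i[V_i]$ over the $(k-p)$-dimensional components of $E$. That theorem requires the support of $D$ to have dimension at most $k-p$, and your identification of $E$ with $\mathcal{I}_{f^n}$, together with the claim $\mathrm{codim}\, E\geq p$, is not justified: the set on which $(f^*)^n(\alpha)$ and $(f^n)^*(\alpha)$ can differ is not $\mathcal{I}_{f^n}$ but (roughly) $\bigcup_{j=0}^{n-1}f^{-j}(\pi_1(\mathcal{C}_f))$, and iterated total preimages of a codimension-$\geq p$ set under a meromorphic map can contain hypersurfaces (for instance when $f$ contracts a divisor into the indeterminacy locus). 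The hypothesis controls $\pi_1(\mathcal{C}_f)$ itself, not its preimages, so for $p\geq 2$ the decomposition $D=\sum_i c_i[V_i]$ can fail, and with it your derivation of $D\geq 0$.

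The paper reaches $D\geq 0$ by a route that is insensitive to the dimension of the exceptional set $A$: since $(f^n)^*(\alpha)$ has $L^1$ coefficients it puts no mass on $A$, so $D=\mathbf{1}_A D=\mathbf{1}_A\bigl((f^*)^n(\alpha)\bigr)$, which is positive because the restriction of a positive current to a Borel set is positive --- no support theorem, no Lelong numbers, no codimension count. From $D\geq 0$ and $\{D\}=0$ your own last step (pairing with $\omega_X^{k-p}$) finishes, so substituting this restriction argument for the support-theorem step repairs your proof. One smaller point: in the reduction to smooth forms, the continuity of Theorem \ref{TheoremInterestingExample1} applies to each single factor $f^*$ (hence to $(f^*)^n$), but for $(f^n)^*$ one should instead appeal to condition 1) and Definition \ref{DefinitionPullbackCurrentsByMeromorphicMaps}, which define $(f^n)^*(T)$ precisely as the limit of $(f^n)^*$ applied to the approximating smooth forms; this is how the paper handles the passage from smooth forms to general positive closed currents.
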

\begin{proof}

First, let $\alpha$ be a positive closed smooth $(p,p)$ form. Then $(f^n)^*(\alpha )$ is a current with $L^1$ coefficients. Then the assumption that $(f^{*})^n(\alpha )$ is a positive closed current and the fact that $(f^{*})^n(\alpha )=(f^n)^*(\alpha )$ outside a proper analytic set imply that $(f^{*})^n(\alpha )\geq (f^n)^*(\alpha )$. But by the $p$-algebraic stability, these currents have the same cohomology class and hence must be the same. Hence the conclusion of Remark \ref{Remark4} holds for positive closed smooth $(p,p)$ forms.  

Now let $T$ be a positive closed $(p,p)$ current and let $n$ be a positive integer. By Definition \ref{DefinitionPullbackDdcOfOrderSCurrents}, there are positive closed smooth $(p,p)$ forms $T_j^{\pm}$ so that $||T_j^{\pm}||$ is uniformly bounded, $T_j^+-T_j^-$ weakly converges to $T$, and
\begin{eqnarray*}
(f^n)^{*}(T)=\lim _{j\rightarrow\infty}(f^n)^*(T_j^+-T_j^-). 
\end{eqnarray*}
By the first paragraph of the proof $(f^n)^*(T_j^+-T_j^-)=(f^{*})^n(T_j^+-T_j^-)$ for any $n$ and $j$. Because $\pi _1(\mathcal{C}_f)$ has codimension $\geq p$, the continuity property in Theorem \ref{TheoremInterestingExample1} implies that 
\begin{eqnarray*}
\lim _{j\rightarrow\infty}(f^{*})^n(T_j^+-T_j^-)=(f^{*})^n(T).
\end{eqnarray*}
Therefore $(f^n)^{*}(T)=(f^{*})^n(T)$ as wanted.
\end{proof}

\subsection{Proof of Theorem \ref{TheoremGreenCurrents}} We follow the proof of Theorem 1.2 in \cite{bayraktar}, which in turn followed closely the proof of Theorem 2.2 in \cite{guedj3}. Our proof is almost identical to that of \cite{bayraktar}, but we will include the complete proof here for convenience. We will clearly indicate in the below where our proof differs from that of \cite{bayraktar}.

First, we recall the definition of currents with minimal singularities in a psef  $(1,1)$ cohomology class. Let $\theta \in H^{1,1}(X)$  be psef, and let's choose a smooth closed $(1,1)$ form representing $\theta$, which we still denote by $\theta$ for convenience. Following Demailly-Peternell-Schneider (see the proof of Theorem 1.5 in \cite{demailly-peternell-schneider}) we define
\begin{eqnarray*}
v ^{min}_{\theta}=\sup \{\varphi \leq 0:~\theta +dd^c\varphi \geq 0\},
\end{eqnarray*}
and $T^{min}_{\theta}=\theta +dd^cv^{min}_{\theta}$. We will show that the limit 
\begin{eqnarray*}
T=\lim _{n\rightarrow\infty}\frac{1}{\lambda ^n}(f^*)^n(T^{min}_{\theta})    
\end{eqnarray*}
exists, and $T$ is what needed. 

Since $f^*\{\theta\}=\lambda \{\theta\}$ in cohomology, we have by the $dd^c$ lemma for compact K\"ahler manifolds that
\begin{eqnarray*}
\frac{1}{\lambda}f^*(T^{min}_{\theta})=\theta +dd^c\phi _1,
\end{eqnarray*}
where $\phi _1$ is a quasi-PSH function. Hence we can assume that $\phi _1\leq 0$, and from the definition of $v^{min}_{\theta}$ we get $\phi _1\leq v^{min}_{\theta}$. 

Applying $\frac{1}{\lambda}f^*$ to the above equality we find that
\begin{eqnarray*}
\frac{1}{\lambda ^2}(f^*)^2(T^{min}_{\theta})=\theta +dd^c\phi _2,
\end{eqnarray*}
where 
\begin{eqnarray*}
\phi _2=\phi _1+\frac{1}{\lambda}(\phi _1-v^{min}_{\theta})\circ f\leq \phi _1.
\end{eqnarray*}

Iterating this we obtain
\begin{eqnarray*}
\frac{1}{\lambda ^n}(f^*)^n(T^{min}_{\theta})=\theta +\phi _n,
\end{eqnarray*}
where 
\begin{eqnarray*}
\phi _n=\phi _1+\sum _{j=1}^{n-1}\frac{1}{\lambda ^j}(\phi _1-v^{min}_{\theta})\leq \phi _{n-1}.
\end{eqnarray*}
(Here is the first place where our proof differs from that in \cite{bayraktar}: We don't need $f$ to be $1$-algebraic stable here.)

$\phi _n$ is therefore a decreasing sequence of quasi-PSH functions. By Hartogs principle, either $\phi _n $ converges uniformly to $-\infty$  or converges to a quasi-PSH function $\phi$. We now use a trick by Sibony \cite{sibony} to rule out the first possibility. 

Let $R$ be a positive closed $(1,1)$ current whose cohomology class is $\{\theta\}$. We consider Cesaro's means 
\begin{eqnarray*}
R_N=\frac{1}{N}\sum _{j=1}^{N-1}\frac{1}{\lambda ^j}(f^*)^j(R).  
\end{eqnarray*}
(Here is the second place where our proof differs from that in \cite{bayraktar}: Again, we don't need $f$ to be $1$-algebraic stable.)

Notice that $R_N$ are positive closed $(1,1)$ currents having the same cohomology class $\{\theta\}$, hence have uniformly bounded masses. We can then extract a cluster point $S$. From the definition, it is easy to see that $f^*(S)=\lambda S$ and the cohomology class of $S$ is $\{\theta\}$. Therefore, by the $dd^c$ lemma we can write 
\begin{eqnarray*}
S=\theta +dd^c u,  
\end{eqnarray*}
where $u$ is a quasi-PSH function. By the invariance of $S$, after adding a constant to $u$ we can assume that
\begin{eqnarray*}
\phi _1-\frac{1}{\lambda}v_{\theta}^{min}\circ f=u-\frac{1}{\lambda}u\circ f.
\end{eqnarray*}
From this, it is easy to obtain
\begin{eqnarray*}
\phi _n=u+\frac{1}{\lambda ^n}v^{min}_{\theta}\circ f^n-\frac{1}{\lambda ^n}u\circ f^n.
\end{eqnarray*}

Here is the last and main difference between our proof and that in \cite{bayraktar}: By definition of $v^{min}_{\theta}$, there is a constant $C$ such that $u\leq v^{min}_{\theta}+C$. Therefore
\begin{eqnarray*}
\phi _n\geq u-\frac{C}{\lambda ^n}.
\end{eqnarray*}
Thus $\phi _n$ have uniformly bounded $L^1$ norms, and thus converge to a quasi-PSH function $g_{\theta}$. Therefore the limit
\begin{eqnarray*}
\lim _{n\rightarrow\infty}\frac{1}{\lambda ^n}(f^*)^n(T^{min}_{\theta} )=T
\end{eqnarray*}  
exists, where $T=\theta +dd^cg_{\theta}$. By standard arguments (see Sibony's paper \cite{sibony}), $T$ is what needed.

\subsection{Proof of Theorem \ref{TheoremIntersection11And22Currents}} Since the maps $f^n$ are all pseudo-automorphims, we need only to prove Theorem \ref{TheoremIntersection11And22Currents} for the case $n=1$. Let $T$ be a positvive closed $(1,1)$ current and let $\eta$ be a closed smooth $(2,2)$ form, we will show that $T\wedge f_*(\eta )$ is well-defined with respect to Definition \ref{DefinitionIntersectionCurrents}. We may assume without loss of generality that $\theta$ is positive. Hence  need to show that there is a $(3,3)$ current $S$ so that for any good approximation of $DSH$ currents by $C^2$ forms $\mathcal{K}_j$ then
\begin{eqnarray*}
\lim _{j\rightarrow\infty}\mathcal{K}_j(T)\wedge f_*(\eta )=S.
\end{eqnarray*}
Note that $\mathcal{K}_j(T)=\mathcal{K}_j^{+}(T)-\mathcal{K}_j^{-}(T)$, where $\mathcal{K}_j^{\pm}$  are positive closed $(1,1)$ forms of uniformly bounded masses. Define $\mu _j^{\pm}=\mathcal{K}_j^{\pm}\wedge f_*(\eta )$ then $\mu _j^{\pm}$ are positive measures of uniformly bounded masses. Therefore, there are cluster points $\mu ^{\pm}$ of $\mu _j^{\pm}$. To finish the proof of Theorem \ref{TheoremIntersection11And22Currents}, it is therefore sufficient to show that $\mu=\mu ^+-\mu ^-$ is a (signed) measure independent of the choice of the good approximation $\mathcal{K}_j$ and the subsequence defining $\mu ^{\pm}$. To this end, we will show that if $\beta$ is a smooth function on $X$ then 
\begin{equation}
<\mu ,\beta >=\int _Xf^*(\beta T)\wedge \eta .   
\label{Equation1}\end{equation}
Since $T$ is a positive closed $(1,1)$ current and $\beta$ is a smooth function, the current $\beta T$ is a DSH $(1,1)$ current. Hence by Theorem \ref{TheoremPullbackPushforwardCurrents}, the $f^*(\beta T)$ in the integral in the RHS of (\ref{Equation1}) is well-defined and is independent of either the choice of $\mathcal{K}_j$ or the subsequences defining $\mu ^{\pm}$.

We now proceed to prove (\ref{Equation1}). By definition
\begin{eqnarray*}
 <\mu ,\beta >=\lim _{j\rightarrow\infty}<\mu _j^+-\mu _j^-,\beta >=\lim _{j\rightarrow\infty}\int _X\beta\mathcal{K}_j(T)\wedge f_*(\eta ).
\end{eqnarray*}
For each $j\in \mathbb{N}$, by definition we have 
\begin{eqnarray*}
\int _X\beta\mathcal{K}_j(T)\wedge f_*(\eta )=\int _Xf^*(\beta \mathcal{K}_j(T))\wedge \eta .
\end{eqnarray*}
It is easy to check that the DSH currents $\beta \mathcal{K}_j(T)$ converges in DSH to the current $\beta T$. Hence by the continuity of $f^*:DSH^{1,1}(X)\rightarrow DSH^{1,1}(X)$, we have $\lim _{j\rightarrow\infty}f^*(\beta \mathcal{K}_j(T))=f^*(\beta T)$. Thus $T\wedge f_*(\eta )$ is well-defined. 

Note that if $\beta$ is positive then $(f^*)(\beta T)$ is positive. In fact, using desingularization of the graph of $f$ we may assume that $f$ is holomorphic. Then Theorem \ref{TheoremPullbackCompatibleWithWedgeProduct} implies that $f^*(\beta T)=f^*(\beta )f^*(T)$, and the latter is positive. Therefore the current $T\wedge f_*(\eta )$ is a positive measure. 

Assume now that moreover $T$ has no mass on hypersurfaces. Assuming the following claim, we can finish the proof. By the claim $f^*(T)$ has no mass on proper analytic subsets of $X$. Therefore the positive measure $f^*(T)\wedge \eta$ has no mass on proper analytic subsets. Thus the measure $f_*(f^*(T)\wedge \eta)$ is well-defined and has no mass on proper analytic subsets. Out of a proper analytic set $A$, the current $f_*(\eta )$ is smooth and hence the two measures $T\wedge f_*(\eta )$ and $f_*(f^*(T)\wedge \eta)$ are the same on $X-A$. Moreover these two measures have the same mass, thus they must be the same.

Claim: If $T$ has no mass on hypersurfaces then so are $f^*(T)$ and $f_*(T)$. 

Proof of the claim: The claim follows standard arguments (see e.g. Section 2.2. in Diller-Dujardin-Guedj \cite{diller-dujardin-guedj1}) and Lemma \ref{LemmaImageOfCurveByPseudoAutomorphisms}. As argued in the proof of part a) of Theorem \ref{TheoremInvariantCurrents}, there is a set $A$ which is a countable union of analytic sets of dimension $\leq 1$ so that for $x\in X-A$ we have
\begin{eqnarray*}
\nu (f^*T,x)\leq \nu (T,f(x)),
\end{eqnarray*}
where $\nu (.,.)$ is the Lelong number of a positive closed current at a point. Since $T$ has no mass on hypersurfaces, it follows by Siu's decomposition theorem that there is a set $B$ which is a countable union of analytic sets of dimension at most $1$, so that if $y\notin B$ then $\nu (T,y)=0$. Therefore $\nu (f^*T,x)=0$ for $x\in X-A-C$ where $C=\{x\in X:f(x)\notin B\}$. By Lemma \ref{LemmaImageOfCurveByPseudoAutomorphisms} again, the set $C$ is a countable union of analytic sets of dimension $\leq 1$. Therefore $f^*T$ has no mass on hypersurfaces.

The claim for $f_*(T)$ is proved similarly. 

\section{Some discussions on Question 1}
\label{SectionDiscussion}

As stated in the introduction, the usual criteria used to prove the equi-distribution property for the Green $(1,1)$ currents (see e.g. \cite{diller-guedj}, \cite{guedj3}, \cite{bayraktar}) are not applicable to the examples in \cite{bedford-kim4}. Hence a complete answer to Question 1 will require new tools developed. In this section we discuss some cases where Question 1 may be answered in affirmative. 

We first state the criteria used in \cite{diller-guedj} (see Lemma 2.5 therein) and \cite{bayraktar} (see Lemma 5.4 and Proposition 5.5 therein). The following lemma is Lemma 5.4 in \cite{bayraktar}.
\begin{lemma}
Let $f:X\rightarrow X$ be a dominant meromorphic map of a projective manifold of dimension $\geq 2$. Let $Z$ be a desingularization of the graph of $f$, and let $\pi ,g:Z\rightarrow X$ be the induced holomorphic maps (here $\pi$ is a modification). Let $\theta$ be a smooth closed $(1,1)$ form on $X$. If $\{g^*(\theta )\}.\{C\}\geq 0$ for any $\pi$-exceptional curve $C$ (i.e. a curve $C$ for which $\pi (C)$ is a point), then the potentials of $f^*(\theta )$ are bounded from above.     
\label{LemmaPositivityOfGreenCurrents}\end{lemma}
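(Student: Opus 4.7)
The plan is to analyze the pullback $f^*\theta = \pi_*(g^*\theta)$ by comparing $g^*\theta$ on $Z$ with the pullback via $\pi$ of a smooth representative of $\{f^*\theta\}$, and to extract a quasi-PSH potential that is bounded above. The main input will be the negativity lemma applied to the exceptional divisor that measures the discrepancy between $g^*\theta$ and $\pi^*\{f^*\theta\}$.

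First I would show that there exists an effective $\pi$-exceptional $\mathbb{R}$-divisor $D$ on $Z$ with
\begin{eqnarray*}
\pi^*\{f^*\theta\} = \{g^*\theta\} + \{D\}
\end{eqnarray*}
in cohomology. Since $\pi_*(g^*\theta)=f^*\theta$, the class $\{D\}:=\pi^*\{f^*\theta\}-\{g^*\theta\}$ vanishes under $\pi_*$, hence is represented by a divisor supported on $\mathrm{Exc}(\pi)$. For any $\pi$-exceptional curve $C$ the projection formula gives $\pi^*\{f^*\theta\}.\{C\}=\{f^*\theta\}.\pi_*\{C\}=0$, so
\begin{eqnarray*}
\{D\}.\{C\}=-\{g^*\theta\}.\{C\}\leq 0
\end{eqnarray*}
by hypothesis. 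The negativity lemma (Shokurov/Kawamata-Matsuki) then yields that $D$ is effective.

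Next I would produce the potential. Fix a smooth closed $(1,1)$ form $\alpha_0$ in the class $\{f^*\theta\}$ on $X$ and a smooth closed representative $\beta_D$ of $\{D\}$ on $Z$. By the $dd^c$-lemma, $\pi^*\alpha_0=g^*\theta+\beta_D+dd^c\psi$ for some smooth function $\psi$ on $Z$. Writing $D=\sum c_iE_i$ with $c_i\geq 0$ and choosing defining sections $s_i$ of $E_i$ (with hermitian metrics), the Poincaré–Lelong formula gives $[D]=\beta_D+dd^c\phi_D$ where $\phi_D=\sum c_i\log|s_i|^2$ is quasi-PSH and, crucially, bounded above on $Z$. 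Substituting and pushing forward with $\pi$, using $\pi_*\pi^*\alpha_0=\alpha_0$, $\pi_*[D]=0$ (as $D$ is exceptional), and that $\pi_*$ commutes with $dd^c$, I obtain
\begin{eqnarray*}
f^*\theta=\alpha_0+dd^c u,\qquad u:=\pi_*\phi_D-\pi_*\psi.
\end{eqnarray*}
Since $\psi$ is smooth on the compact manifold $Z$, $\pi_*\psi$ is bounded on $X$; and since $\phi_D$ is bounded above on $Z$, the function $\pi_*\phi_D$ (defined pointwise off the discriminant locus, which has measure zero) is also bounded above. Hence $u$ is bounded above, which is the desired conclusion.

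The main obstacle I anticipate is not the cohomological computation itself but the careful justification of the pushforward manipulations: verifying that $\pi_*(\pi^*\alpha_0)=\alpha_0$ and $\pi_*dd^c=dd^c\pi_*$ remain valid for the (possibly singular) currents appearing in the equation, and that the pointwise pushforward of the quasi-PSH function $\phi_D$ agrees with the current-theoretic one after extension across the exceptional locus. These are standard but need to be handled cleanly; once they are in place, the positivity condition on $\pi$-exceptional curves enters solely through the negativity lemma, which is the real engine of the argument.
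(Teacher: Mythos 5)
The paper does not actually prove this lemma: it is quoted as Lemma 5.4 of \cite{bayraktar} (which in turn is modeled on Lemma 2.5 of \cite{diller-guedj}), so there is no in-paper proof to compare against; I can only judge your argument on its own merits. It is correct, and it is in substance the standard proof of that lemma: write $\{D\}:=\pi^*\{f^*\theta\}-\{g^*\theta\}$ as a $\pi$-exceptional $\mathbb{R}$-divisor class, use $\pi_*\{C\}=0$ together with the hypothesis to get $\{D\}.\{C\}\leq 0$ for all contracted curves, conclude $D\geq 0$ by the negativity lemma, and read off the potential $u=\pi_*\bigl(\sum_i c_i\log|s_i|^2\bigr)-\pi_*\psi$, which is bounded above precisely because the $c_i$ are nonnegative. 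The sign bookkeeping is right (effectivity of $D$ is what forces the $\log|s_i|^2$ terms to tend to $-\infty$ rather than $+\infty$ on the exceptional divisors), and the current-theoretic manipulations you flag at the end ($\pi_*\pi^*\alpha_0=\alpha_0$, $\pi_*dd^c=dd^c\pi_*$, $\pi_*[D]=0$, and the agreement of the pointwise and distributional pushforwards of $\phi_D$) are all legitimate for a proper modification between compact manifolds. The one step that deserves an explicit justification is the very first: that a $(1,1)$-class killed by $\pi_*$ is a real linear combination of the $\pi$-exceptional prime divisors. This is immediate when $\pi$ is a composition of blowups along smooth centers --- which is exactly the situation in which the lemma is applied in Theorem \ref{TheoremPositivityOfGreenCurrentsPseudoAutomorphism}, whose hypothesis (ii) guarantees it --- but for an arbitrary desingularization of the graph you should either cite the general decomposition of $H^{1,1}(Z,\mathbb{R})$ under a modification or simply choose $Z$ of that form at the outset. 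With that caveat recorded, the proposal is a complete and correct proof.
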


Applying this criterion, we now give a proof of Theorem \ref{TheoremPositivityOfGreenCurrentsPseudoAutomorphism}. 
\begin{proof}
Let us denote by $\theta$ a closed smooth $(1,1)$ form whose cohomology class is $\{\theta\}$. If we can show that $\{g^*(\theta )\}.\{C\}=0$ for all $\pi$-exceptional curve $C$, then Lemma \ref{LemmaPositivityOfGreenCurrents} implies that potentials of $f^*(\theta )$ are bounded from above. From the latter, it follows from the trick of Sibony that the limit
\begin{eqnarray*}
\lim _{n\rightarrow\infty}\frac{(f^*)^n(\theta )}{\lambda _1^n}  
\end{eqnarray*}
exists, and is the same for any $\theta$. We can also check easily that this limit is the same as the limit in Theorem \ref{TheoremGreenCurrents}, see e.g. Proposition 4.3 in \cite{bayraktar}.

It remains to check that for all $\pi$-exceptional curve $C$ then $\{g^*(\theta )\}.\{C\}=0$. Let $D_1,D_2,\ldots ,D_m\subset X$ be the irreducible subvarieties of dimension $dim(X)-2$ in the images of the exceptional divisors of $\pi :Z\rightarrow X$. Then by Lemma 4 in \cite{truong3}, we have in cohomology
\begin{eqnarray*}
f^*\{\theta \}.f^*\{\theta \}=\pi _*(g^*\{\theta\}).\pi _*(g^*\{\theta\})=\pi _*(g^*(\{\theta \}.\{\theta \}))+\sum _jl(g^*(\theta ),D_j)\{D_j\} .
\end{eqnarray*}
Here $l(g^*(\theta ),C_j)$ are non-negative numbers depending on the intersections between $g^*(\theta )$ and the $\pi$-exceptional curves belonging to $\pi ^{-1}(D_j)$. Since $f^*(\{\theta\})=\lambda _1\{\theta\}$, the assumption (i) that $\{\theta \}.\{\theta \}=0$ implies that $\sum _jl(g^*(\theta ),D_j)\{D_j\}  =0$ which means that each individual term $l(g^*(\theta ),D_j)=0$. The latter, when combined with the proof of Lemma 4 in \cite{truong3} and our assumption (ii), implies that $\{g^*\theta\}.\{C\}=0$ for all $\pi$-exceptional curve $C$. The proof is completed. 
\end{proof}  

There are several other classes of pseudo-automorphisms in dimension $3$ where we expect that Question 1 has an affirmative answer. Let $f:X\rightarrow X$ be a pseudo-automorphism in dimension $3$ such that $\lambda _1(f)^2>\lambda _2(f)$. Let $T^+$ be a non-zero positive closed $(1,1)$ current such that $f^*(T^+)=\lambda _1(f)T^+$. In fact, it seems that Case 2 below should be true for any such pseudo-automorphism. Let $Z$ be a desingularization of the graph of $f$ and let $\pi ,g:Z\rightarrow X$ be the induced maps, where $\pi :Z\rightarrow X$ is a finite composition of blowups along smooth centers.  

Case 1: $\{T^+\}.\{T^+\}=0$. This condition is the same as condition (i) in Theorem \ref{TheoremPositivityOfGreenCurrentsPseudoAutomorphism}, and we think that condition (i) alone is enough to prove the conclusions of Theorem \ref{TheoremPositivityOfGreenCurrentsPseudoAutomorphism}.

Case 2: There is a K\"ahler form $\omega$ such that the cluster points of the sequence
\begin{eqnarray*}
\frac{1}{\lambda _1(f)^n}f^*((f^n)^*(\omega ))\wedge f^*(\omega )-\frac{1}{\lambda _1(f)^n}f^*((f^n)^*(\omega )\wedge \omega )
\end{eqnarray*}
contain a positive closed current. In this case taking limit when $n\rightarrow\infty$ we have by Lemma 4 in \cite{truong3}
\begin{eqnarray*}
f^*\{T^+\}.f^*\{\omega \}-f^*\{T^+\wedge \omega \}=\sum _j l(g^*T^+,g^*(\omega ),D_j)\{D_j\},
\end{eqnarray*}
where $l(g^*T^+,g^*(\omega ),D_j)$ are non-negative numbers depending bilinearly on $g^*T^+$ and $g^*(\omega )$. More precisely, $l(g^*T^+,g^*(\omega ),D_j)$ depends on the products of numbers $\{g^*T^+\}.\{C\}$ and $\{g^*(\omega )\}.\{C\}$ where $C$ are $\pi$-exceptional curves belonging to $\pi ^{-1}(D_j)$. Since $\{g^*(\omega )\}.\{C\}> 0$ for all curves $C$ for which $g(C)$ is not a point, we expect that $\{g^*T^+\}.\{C\}\geq 0$ for all $\pi$-exceptional curves. The expectation is true in the simplest case where $\pi :Z\rightarrow X$ is a blowup along a finite number of pairwise disjoint curves in $X$.

% Set the ending of a LaTeX document


\begin{thebibliography}{xx}
\bibitem{alessandrini-bassanelli2}{L. Alessandrini and G. Bassanelli,} \textit{Transforms of currents by modifications and $1$-convex manifolds,} Osaka J. Math. 40 (2003), no. 3, 711--740.

\bibitem{bassanelli}{G. Bassanelli,} \textit{A cut-off theorem for plurisubharmonic currents,} Forum Math. 6 (1994), no. 5, 567--595.

\bibitem{bayraktar}{T. Bayraktar,} \textit{Green currents for meromorphic maps of compact K\"ahler manifolds,} J. Geom. Anal., to appear. arXiv: 1107.3063.

\bibitem{blanc}{J. Blanc,} \textit{Dynamical degrees of (pseudo)-automorphisms fixing cubic hypersurfaces,} arXiv: 1204.4256.

\bibitem{bost-gillet-soule}{J. B. Bost, H. Gillet, and C. Soule,} \textit{Heights of projective varieties and positive Green forms,} J. Amer. Math. Soc., 7 (1994), no 4, 903--1027.

\bibitem{bedford-cantat-kim}{E. Bedford, S. Cantat, and K.-H. Kim,} Work in progress, March 2013.

\bibitem{bedford-diller-kim}{E. Bedford, J. Diller, and K.-H. Kim,} Work in progress.

\bibitem{bedford-kim4}{E. Bedford and K.-H. Kim,} \textit{Pseudo-automorphisms without dimension-reducing factors,} Manuscript.

\bibitem{bedford-kim}{E. Bedford and K.-H. Kim,} \textit{Pseudo-automorphisms of $3$-space: periodicities and positive entropy in linear fractional recurrences,} arXiv: 1101.1614.

\bibitem{bedford-taylor}{E. Bedford and B. A. Taylor,} \textit{A new capacity for pluri-subharmonic functions,} Acta Math. 149 (1-2), 1--40 (1982).

\bibitem{cantat}{S. Cantat,} \textit{Dynamique des automorphismes des surfaces K3,} Acta Math. 187:1 (2001), 1--57.

\bibitem{cantat-dolgachev}{S. Cantat and I. Dolgachev,} \textit{Rational surfaces with a large group of automorphisms,} Journal of the AMS, to appear.

\bibitem{demailly}{Jean-Pierre Demailly,} \textit{Complex analytic and differential geometry,} Online book, version of Thursday 10 September 2009.

\bibitem{demailly2}{Jean-Pierre Demailly,} \textit{Monge-Ampere operators, Lelong numbers and intersection theory, } Complex and Algebraic Geometry, Univ. Serm. Math.  Plenum Press, 115--193, (1993).

\bibitem{demailly-peternell-schneider}{J.-P. Demailly, T. Peternell and M. Schneider,} \textit{Pseudo-effective line bundles on compact K\"ahler manifolds,} Int. J. Math. 12 (6), 689--741, 2001.

\bibitem{deThelin-deVigny}{H. de Thelin and G. de Vigny,} \textit{Entropy of meromorphic maps and dynamics of birational maps,} Memoire de la SMF, 122 (2010).

\bibitem{diller-favre}{J. Diller and C. Favre,} \textit{Dynamics of bimeromorphic maps of surfaces,} Amer. J. Math. 123 (2001), no. 6, 1135--1169.

\bibitem{diller}{Jeffrey Diller,} \textit{Birational maps, positive currents, and dynamics,} Michigan Math. J. 46 (1999), pages 361--375.

\bibitem{diller-dujardin-guedj1}{J. Diller, R. Dujardin, and V. Guedj,} \textit{Dynamics of meromorphic maps with small topological degree I: from cohomology to currents,} Indiana Univ. Math. J. 59 (2010), no. 2, 521--561.

\bibitem{diller-guedj}{J. Diller and V. Guedj,} \textit{Regularity of dynamical Green's functions,} Trans. AMS. 361 (2009), no. 9, 4783--4805.

\bibitem{dinh-sibony10}{T-C Dinh and N. Sibony,} \textit{Une borne sup\'erieure de l'entropie topologique d'une application rationnelle,} Ann. of Math., 161, 2005, 1637--1644. 

\bibitem{dinh-sibony1}{T-C Dinh and N. Sibony,} \textit{Regularization of currents and entropy,} Ann. Sci. Ecole Norm. Sup. (4), 37 (2004), no 6, 959--971.

\bibitem{dinh-sibony2}{T-C Dinh and N. Sibony,} \textit{Pull-back of currents by holomorphic maps,} Manuscripta Math. 123 (2007), no 3, 357--371.

\bibitem{dinh-sibony5}{T-C Dinh and N. Sibony,} \textit{Green currents for holomorphic automorphisms of compact Kahler manifolds,} J. Amer. Math. Soc. 18 (2005) , no 2, 291--312.

\bibitem{dinh-sibony4}{T-C Dinh and N. Sibony,} \textit{Super-potentials of positive closed currents, intersection theory and dynamics,} Acta Math. 203 (2009) , no 1, 1--82.

\bibitem{dinh-sibony3}{T-C Dinh and N. Sibony,} \textit{Super-potentials for currents on compact Kahler manifolds and dynamics of automorphisms,} J. Algebraic Geom. 19 (2010) , no 3, 473--529.

\bibitem{dinh-sibony9}{T-C Dinh and N. Sibony,} \textit{Density of positive closed currents and dynamics of Henon-type automorphisms of $\mathbb{C}^k$ (part I),} arXiv: 1203.5810.

\bibitem{dolgachev-ortland}{I. Dolgachev and D. Ortland,} \textit{Point sets in projective spaces and theta functions,} Ast\'erisque, Vol 165 (1988).

\bibitem{favre}{C. Favre,} \textit{Note on pullback and Lelong numbers of currents,} Bull. Soc. Math. France 127(3):445--458, 1999.

\bibitem{fornaess-sibony2}{J. E. Fornaess and N. Sibony,} \textit{Oka's inequality for currents and applications,} Math. Ann. 301 (1995), 399--419. 

\bibitem{fornaess-sibony1}{J. E. Fornaess and N. Sibony,} \textit{Complex dynamics in higher dimensions,} Notes partially written by Estela A. Gavosto. NATO Adv. Sci. Inst. Ser. C Math. Phys. Sci. 459, Complex potential theory (Montreal, PQ, 1993), 131--186, Kluwer Acad. Publ., Dordrecht, 1994. 

\bibitem{griffiths-harris}{P. Griffiths and J. Harris,} \textit{Principles of algebraic geometry,} 1978, John Wiley and Sons, Inc.

\bibitem{gromov}{M. Gromov,} \textit{On the entropy of holomorphic maps,} Enseign. Math. (2), 49 (3--4), 217--235. Manuscript (1997).

\bibitem{guedj3}{V. Guedj,} \textit{Decay of volumes under iteration of meromorphic mappings,} Ann. Inst. Fourier (Grenoble) 54 (2004), no. 7, 2369--2386.

\bibitem{guedj}{Vincent Guedj,} \textit{Proprietes ergodiques des applications rationnelles,} (in French),  arXiv:math/0611302.

\bibitem{lelong}{P. Lelong,} \textit{Fonctions plurisousharmoniques et formes diff\'erentieles positives,} Dunod Paris (1968).

\bibitem{meo}{M. Meo,} \textit{Inverse image of a closed positive current by a surjective analytic map,} (in French),  C. Acad. Sci. Paris Ser. I Math. 322 (1996) , no 12, 1141--1144.

\bibitem{oguiso-perroni}{K. Oguiso and F. Perroni,} \textit{Automorphisms of rational manifolds of positive entropy with Siegel disks ,}  arXiv:0905.4257. 

\bibitem{Og1}Oguiso, K., :  Bimeromorphic automorphism groups of non-projective hyperk\"ahler manifolds -- a note inspired by C. T. McMullen. J. Differential Geom. {\bf 78} (2008) 163--191.

\bibitem{oguiso2}{K. Oguiso,} \textit{Automorphism groups of Calabi-Yau manifolds of Picard number two,} arXiv: 1206.1649.

\bibitem{oguiso1}{K. Oguiso,} \textit{A remark on dynamical degrees of automorphisms of hyperk\"ahler manifolds,}  Manuscripta Math. 130 (2009), no. 1, 101--111.

\bibitem{perroni-zhang}{F. Perroni and D.-Q. Zhang,} \textit{Pseudo-automorphisms of positive entropy on the blowups of products of projective spaces,}  arXiv:1111.3546.

\bibitem{russakovskii-shiffman}{Alexander Russakovskii and Bernard Shiffman,} \textit{Value distribution for sequences of rational mappings and complex dynamics,} Indiana Univ. Math. J. 46 (1997) , no 3, 897--932.

\bibitem{sibony}{N. Sibony,} \textit{Dynamique des applications de $\mathbb{P}^k$,} In Dynamique et Ge'ome'trie Complexes, Lyon 1997. Panor. Synthese, vol 8, 97--185. Soc. Math. France, Paris, 1999. 

\bibitem{truong3}{T. T. Truong,}\textit{The simplicity of the first spectral radius of a meromorphic map,} arXiv:1212.1019.

\bibitem{truong1}{T. T. Truong,}\textit{ Pullback of currents by meromorphic maps,} Bulletin de la Societe Mathematique de France, accepted. arXiv:1107.1743.

\bibitem{yomdin}{Y. Yomdin,}\textit{Volume growth and entropy,} Israel J. Math. 57 (3), 285--300 (1987).

\end{thebibliography}
\end{document}